\def\darrow{\mathrel{\ThisStyle{\ooalign{$\SavedStyle\rightarrow$\cr%
  \hfil\textcolor{white}{\rule{2\LMpt}{1\LMex}}\kern2\LMpt\hfil}}}}
\newtheorem{theorem}{Theorem}
\newtheorem{proposition}[theorem]{Proposition}
\newtheorem{lemma}[theorem]{Lemma}
\theoremstyle{definition}
\theoremstyle{remark}
\newtheorem{remark}[theorem]{Remark}
\newcommand{\id}{\operatorname{id}}
\newcommand{\defeq}{\vcentcolon=}
\newcommand\nc{\newcommand}
\nc{\on}{\operatorname}
\nc\renc{\renewcommand}
\nc{\BR}{\mathbb R}
\nc{\BC}{\mathbb C}
\nc{\BQ}{\mathbb Q}
\nc{\BF}{\mathbb F}
\nc{\BZ}{\mathbb Z}
\nc{\BN}{\mathbb N}
\nc{\BS}{\mathbb S}
\nc{\BA}{\mathbb A}
\nc{\BP}{\mathbb P}
\nc{\Hom}{\on{Hom}}
\nc{\wt}{\widetilde}
\nc{\vspan}{\on{span}}
\nc{\ord}{\on{ord}}
\nc{\im}{\on{im}}
\nc{\Mat}{\on{Mat}}
\nc{\can}{\on{can}}
\nc{\coker}{\on{coker}}
\nc{\ev}{\on{ev}}
\nc{\Tr}{\on{Tr}}
\nc{\End}{\on{End}}
\nc{\Aut}{\on{Aut}}
\nc{\swap}{\on{swap}}
\nc{\Set}{\on{Set}}
\nc{\bC}{{\mathbf C}}
\nc{\bc}{{\mathbf c}}
\nc{\bD}{{\mathbf D}}
\nc{\bd}{{\mathbf d}}
\nc{\bE}{{\mathbf E}}
\nc{\be}{{\mathbf e}}
\nc{\bF}{{\mathbf F}}
\nc{\bff}{{\mathbf f}}
\nc{\fa}{\mathfrak a}
\renc{\mod}{\on{-mod}} 
\nc{\adj}{\on{adj}}
\nc{\tensor}[3]{#1 \underset{#2}\otimes #3}
\nc{\Nat}{\on{Nat}}
\nc{\op}{\on{op}}
\nc{\Funct}{\on{Funct}}
\nc{\Ob}{\on{Ob}}
\nc{\fR}{\mathfrak{R}}
\nc{\Vect}{\on{Vect}}
\nc{\ns}{\on{non-spec}}
\nc{\GL}{\on{GL}}
\nc{\ol}{\overline}
\nc{\ul}{\underline}
\nc{\univ}{\on{univ}}
\nc{\Maps}{\on{Maps}}
\nc{\bdd}{\on{bdd}}
\nc{\cont}{\on{cont}}
\nc{\Sym}{\on{Sym}}
\nc{\Ind}{\on{Ind}}
\nc{\Res}{\on{Res}}
\nc{\Ann}{\on{Ann}}
\nc{\cI}{\mathcal{I}}
\nc{\pt}{\on{pt}}
\nc{\Bl}{\on{\Bl}}
\nc{\Spec}{\on{Spec}}
\nc{\Cl}{\on{Cl}}
\nc{\hannahbox}{\boxed}
\nc{\cD}{\mathcal{D}}
\renc{\div}{\on{div}}
\nc{\mc}{\mathcal}
\nc{\pp}{\mathfrak{p}}
\nc{\scr}{\mathscr}
\newcommand{\extp}{\@ifnextchar^\@extp{\@extp^{\,}}}
\def\@extp^#1{\mathop{\bigwedge\nolimits^{\!#1}}}
\tikzset{%
    add/.style args={#1 and #2}{
        to path={%
 ($(\tikztostart)!-#1!(\tikztotarget)$)--($(\tikztotarget)!-#2!(\tikztostart)$)%
  \tikztonodes},add/.default={.2 and .2}}
}
\tikzset{
  on each segment/.style={
    decorate,
    decoration={
      show path construction,
      moveto code={},
      lineto code={
        \path [#1]
        (\tikzinputsegmentfirst) -- (\tikzinputsegmentlast);
      },
      curveto code={
        \path [#1] (\tikzinputsegmentfirst)
        .. controls
        (\tikzinputsegmentsupporta) and (\tikzinputsegmentsupportb)
        ..
        (\tikzinputsegmentlast);
      },
      closepath code={
        \path [#1]
        (\tikzinputsegmentfirst) -- (\tikzinputsegmentlast);
      },
    },
  },
  rightend arrow/.style={postaction={decorate,decoration={
        markings,
        mark=at position .81 with {\arrow[#1]{triangle 45}}
      }}},
      leftend arrow/.style={postaction={decorate,decoration={
        markings,
        mark=at position .18 with {\arrowreversed[#1]{triangle 45}}
      }}},
}
\title[Most Odd-Degree Binary Forms Fail to Properly Represent a Square]{Most Odd-Degree Binary Forms Fail to \\ Primitively Represent a Square}
\keywords{superelliptic equation, stacky curve, primitive integer solution, arithmetic statistics}
\author{Ashvin A.~Swaminathan}
\email{ashvins@alumni.princeton.edu}
\address{Department of Mathematics, Harvard University, \mbox{Cambridge, MA 02138}, USA}
\thanks{This research was supported by the Paul and Daisy Soros Fellowship and the NSF Graduate Research Fellowship.}
\subjclass[2020]{11D45, 14G05 (primary), and 20G25, 14H25 (secondary)}
\date{\today}
\begin{document}

\begin{abstract}
    \noindent Let $F$ be a separable integral binary form of odd degree $N \geq 5$. A result of Darmon and Granville known as ``Faltings plus epsilon'' implies that the degree-$N$ \emph{superelliptic equation} $y^2 = F(x,z)$ has finitely many primitive integer solutions. In this paper, we consider the family $\mathscr{F}_N(f_0)$ of degree-$N$ superelliptic equations with fixed leading coefficient $f_0 \in \mathbb{Z} \smallsetminus \pm\mathbb{Z}^2$, ordered by height. For every sufficiently large $N$, we prove that among equations in the family $\mathscr{F}_N(f_0)$, more than $74.9\%$ are insoluble, and more than $71.8\%$ are everywhere locally soluble but fail the Hasse principle due to the Brauer--Manin obstruction. We further show that these proportions rise to at least $99.9\%$ and $96.7\%$, respectively, when $f_0$ has sufficiently many prime divisors of odd multiplicity.
    Our result can be viewed as a strong asymptotic form of ``Faltings plus epsilon'' for superelliptic equations and constitutes an analogue of Bhargava's result that most hyperelliptic curves over $\mathbb{Q}$ have no rational points.
\end{abstract}

\maketitle


\section{Introduction} \label{sec-intro}

Let $F$ be a separable integral binary form of degree $N \geq 5$. When $N = 2n$ is even, the subscheme $C_F$ of the weighted projective plane $\BP_\BQ^2(1,n,1)$ cut out by the equation $y^2 = F(x,z)$ is known as a \emph{hyperelliptic curve}, and by Faltings' Theorem (see~\cite[Satz~7]{MR718935}), the curve $C_F$ has only finitely many rational points. In~\cite[Theorem~1]{thesource}, Bhargava established the following ``strong asymptotic form'' of Faltings' Theorem for hyperelliptic curves: when integral binary $N$-ic forms are ordered by height, the density of forms $F$ such that the curve $C_F$ has a rational point is $o(2^{-n})$. In other words, most even-degree binary forms fail to represent a square.

 When $N= 2n+1$ is odd, the equation $y^2 = F(x,z)$ is called \emph{superelliptic}. In contrast with the case of hyperelliptic curves, the problem of studying rational solutions to superelliptic equations is trivial: given any pair $(x_0,z_0) \in \BQ^2$, the triple
\begin{equation} \label{eq-triv}
(x,y,z) = (x_0 \cdot F(x_0,z_0), F(x_0,z_0)^{n+1}, z_0 \cdot F(x_0, z_0)) \in \BQ^3
\end{equation}
is a solution to the equation $y^2 = F(x,z)$, and in fact, the subscheme $S_F \subset \BP_\BQ^2(2,2n+1,2)$ cut out by this equation is isomorphic to $\mathbb{P}_{\BQ}^1$. On the other hand, the problem of studying \emph{primitive integer solutions} --- triples $(x_0,y_0,z_0) \in \mathbb{Z}^3$ such that $y_0^2 = F(x_0,z_0)$ and $\gcd(x_0,z_0) = 1$ --- is considerably more interesting. For instance, a result of Darmon and Granville states that $y^2 = F(x,z)$ has only finitely many primitive integer solutions (see~\cite[Theorem~1']{MR1348707}). 
Given this analogue of Faltings' Theorem, it is natural to expect that an analogue of Bhargava's strong asymptotic form holds for primitive integer solutions to superelliptic equations. The purpose of this paper is to prove this expectation, i.e., that most integral odd-degree binary forms fail to \emph{primitively} represent a square.

\subsection{Superelliptic Stacky Curves} In the rest of the paper, it will be convenient to reinterpret primitive integer solutions to the superelliptic equation $y^2 = F(x,z)$ as integral points on a certain \emph{stacky curve}, which we denote by $\mathscr{S}_F$ and define as follows. Consider the punctured affine surface
\begin{equation} \label{eq-surfacedef}
\wt{S}_F \defeq V(y^2 = F(x,z)) \subset \BA^3_\BZ \smallsetminus \{(0,0,0)\}.
\end{equation}
The group $\mathbb{G}_m$ acts by scaling on $\wt{S}_F$ via $\lambda \cdot (x_0,y_0,z_0) = (\lambda^2 \cdot x_0,\lambda^{2n+1} \cdot y_0, \lambda^2 \cdot z_0)$. The map $\wt{S}_F \to \BP_\BZ^1$ defined by $(x,y,z) \to [x : z]$ is $\mathbb{G}_m$-equivariant, and the field of $\mathbb{G}_m$-invariant rational functions on $\wt{S}_F$ is generated by $x/z$, so the scheme quotient $\wt{S}_F/\mathbb{G}_m$ is isomorphic to $\mathbb{P}_\BZ^1$. \mbox{The associated} stack quotient $\mathscr{S}_F \defeq [\wt{S}_F / \mathbb{G}_m]$ is a stacky curve with coarse moduli space $\mathbb{P}^1_\mathbb{Z}$. The $\mathbb{G}_m$-action on $\wt{S}_F$ has a nontrivial stabilizer isomorphic to the group scheme $\mu_2$ if and only if $y = 0$, so $\mathscr{S}_F$ has a ``$\tfrac{1}{2}$-point'' at each of the $2n+1$ distinct roots of $F$ over the algebraic closure.

Let $R$ be a principal ideal domain. We say that a triple $(x_0, y_0, z_0) \in R^3$ is an $R$\emph{-primitive solution} to $y^2 = F(x,z)$ if $y_0^2 = F(x_0,z_0)$ and $Rx_0 + Rz_0 = R$. The set of $R$-primitive solutions to $y^2 = F(x,z)$ is readily seen to be in bijection with the set $\wt{S}_F(R)$ of morphisms $\Spec R \to \wt{S}_F$ (i.e., $R$-points of $\wt{S}_F$), which in turn corresponds bijectively with the set $\mathscr{S}_F(R)$: indeed, specifying a map $\Spec R \to \mathscr{S}_F$ is definitionally equivalent to specifying a map $L \to \wt{S}_F$, where $L$ is a torsor of $\Spec R$ by $\mathbb{G}_m$, but $\Spec R$ has no nontrivial $\mathbb{G}_m$-torsors. In what follows, we abuse notation by writing $\mathscr{S}_F(R)$ for the set of $R$-primitive solutions to $y^2 = F(x,z)$, regardless of whether $F$ is separable.

\subsection{Main Results}
Our results concern families of superelliptic equations $y^2 = F(x,z)$ of degree $2n+1 \geq 3$ having fixed leading coefficient $f_0 \defeq F(1,0) \in \BZ \smallsetminus \{0\}$. Suppose that $|f_0|$ is not a square, let $|f_0| = m^2\upkappa$, where $m,\upkappa \in \BZ$ and $\upkappa$ is squarefree, and define the following quantities:
$$\mu_{f_0} \defeq \prod_{p \mid \upkappa}\left(\frac{1}{p^2} + \frac{p-1}{p^{n+1}}\right) \quad \text{and} \quad \mu_{f_0}' \defeq \prod_{p \mid \upkappa} \left(1 - \frac{1}{p^{2p+1}}\right)$$
For a $\BZ$-algebra $R$, we write $\mathscr{F}_{2n+1}(f_0, R) \subset R[x,z]$ for the set of binary forms of degree \mbox{$2n+1$} with leading coefficient equal to $f_0$; when $R = \BZ$, we abbreviate $\mathscr{F}_{2n+1}(f_0,\BZ)$ as $\mathscr{F}_{2n+1}(f_0)$. For simplicity, we sometimes write $\mathscr{F}_{2n+1}(f_0)$ to mean the family of superelliptic equations or superelliptic stacky curves defined by the forms in $\mathscr{F}_{2n+1}(f_0)$. The \emph{height} $H(F)$ of a binary form $F(x,z) = \sum_{i = 0}^{2n+1} f_i x^{2n+1-i}z^i \in \mathscr{F}_{2n+1}(f_0)$ is defined as follows
$$H(F) \defeq \max\{|f_0^{i-1}f_i|^{2n(2n+1)/i} : i = {1, \dots, 2n+1}\}.$$
With this setup, we prove the following analogue for superelliptic stacky curves of~\cite[Theorem~1]{thesource}:
\begin{theorem} \label{thm-main}
Let $f_0$ be a non-square integer. For every sufficiently large $n$, most superelliptic stacky curves in the family $\mathscr{F}_{2n+1}(f_0)$ have no primitive integer solutions. More precisely:
\begin{enumerate}[leftmargin=2em]
    \item[$(\mathrm{a})$] Suppose $2 \nmid f_0$. For every $n \geq 5$, a positive proportion of forms $F \in \mathscr{F}_{2n+1}(f_0)$, when ordered by height, have the property that $\mathscr{S}_F(\BZ) = \varnothing$. Moreover, the lower density of forms $F \in \mathscr{F}_{2n+1}(f_0)$ such that $\mathscr{S}_F(\BZ) = \varnothing$ is at least $1 - \mu_{f_0} + o(2^{-n})$.
    \item[$(\mathrm{b})$] Suppose $2 \mid f_0$. The lower density of forms $F \in \mathscr{F}_{2n+1}(f_0)$ such that $\mathscr{S}_F(\BZ) = \varnothing$ is at least $1 - \mu_{f_0} + O(2^{-\varepsilon_1n^{\varepsilon_2}})$ for some real numbers $\varepsilon_1,\varepsilon_2 > 0$.
\end{enumerate}
\end{theorem}

As shown in Table~\ref{tab-display}, the quantity $1-\mu_{f_0}$ --- which by Theorem~\ref{thm-main} constitutes a lower bound on the density of insoluble equations in $\mathscr{F}_{2n+1}(f_0)$ in the large-$n$ limit --- rapidly approaches $1$ as the number of prime factors of $\upkappa$ grows. Furthermore, the method used to prove Theorem~\ref{thm-main} can be easily adapted to obtain a similar result when the family $\mathscr{F}_{2n+1}(f_0)$ is replaced by any subfamily defined by congruence conditions modulo finitely many prime powers.

Note that Theorem~\ref{thm-main} is subject to the condition that $|f_0|$ is \emph{not} a square. When $|f_0|$ is a square, every form $F \in \mathscr{F}_{2n+1}(f_0)$ is such that $y^2 = F(x,z)$ has a trivial solution, namely $(x,y,z) = (\on{sign}(f_0),\sqrt{|f_0|},0)$. We expect that for most $F \in \mathscr{F}_{2n+1}(f_0)$, the equation $y^2 = F(x,z)$ has no solutions other than the trivial one. The presence of the trivial solution renders this expectation difficult to prove: e.g., to obtain the analogous result for rational points on monic odd-degree hyperelliptic curves, Poonen and Stoll~\cite{MR3245014} combined work of Bhargava and Gross~\cite{MR3156850} on equidistribution of the $2$-Selmer elements of the Jacobians of these curves with a variant of Chabauty's method (cf.~\cite{MR3719247}, in which Shankar and Wang treat the even-degree case, as well as the work of Thorne and Romano~\cite{MR4258170} and Laga~\cite{laga1}, which prove similar theorems for certain families of non-hyperelliptic curves). To the author's knowledge, analogues of the notion of Selmer group and of Chabauty's method remain to be formulated for superelliptic stacky curves.

\FloatBarrier
\begin{table}
    \centering
     \caption{A table demonstrating the strength of the limiting lower bounds $\lim_{n \to \infty}1 - \mu_{f_0}$ (resp., $\lim_{n \to \infty} \mu_{f_0}' - \mu_{f_0}$) on the density of insoluble superelliptic equations given by Theorem~\ref{thm-main} (resp., on the density of superelliptic equations having a Brauer--Manin obstruction to solubility given by Theorem~\ref{thm-hasse}).}
    \begin{tabular}{ |c|c|c|c|c|c|c|c| }
    \hline
 $\{p \mid \upkappa\} \supset$ & $\{2\}$ & $\{3\}$ & $\{5\}$ & $\{2,3\}$ & $\{2,5\}$ &  $\{3, 7\}$ & $\{2, 3, 7\}$ \\ \hline
 $\lim_{n \to \infty}1 - \mu_{f_0} \geq $ & 75.0\% & 88.8\% & 96.0\% & 97.2\% & 99.0\% & 99.7\% & 99.9\% \\
 \hline
  $\lim_{n \to \infty}\mu_{f_0}' - \mu_{f_0} \geq $ & 71.8\% & 88.8\% & 95.9\% & 94.0\% & 95.8\% & 99.7\% & 96.7\% \\
 \hline
\end{tabular}
\vspace*{0.5cm}
\label{tab-display}

\vspace*{-17pt}
\end{table}

In the course of proving Theorem~\ref{thm-main}, we actually prove the stronger statement that most superelliptic stacky curves have a $2$-descent obstruction to having an integral point. Since these stacky curves typically have integral points everywhere locally, it follows that most of them fail the Hasse principle (i.e., do \emph{not} have a $\BZ$-point despite having a $\BZ_v$-point for every place $v$ of $\BQ$) due to a $2$-descent obstruction. Upon showing that such a $2$-descent obstruction yields a case of the \mbox{Brauer--Manin obstruction,} we obtain the following analogue for superelliptic stacky curves of~\cite[Corollary~4]{thesource}\footnote{The proof of~\cite[Corollary~4]{thesource} has a gap; see Remark~\ref{rmk-thefix}, where we explain how a certain step in the proof of Theorem~\ref{thm-hasse} may be modified to obtain a complete proof of~\cite[Corollary~4]{thesource}.} (see also the work of Thorne~\cite{MR3298319} and Thorne and Romano~\cite{MR3787911}, which prove similar theorems for certain families of non-hyperelliptic curves):
\begin{theorem} \label{thm-hasse}
Let $f_0$ be a non-square integer. The lower density of forms $F \in \mathscr{F}_{2n+1}(f_0)$, when ordered by height, such that the stacky curve $\mathscr{S}_F$ fails the Hasse principle due to the Brauer--Manin obstruction is at least \mbox{$\mu_{f_0}' - \mu_{f_0} + o(2^{-n})$} if $2 \nmid f_0$ and is at least $\mu_{f_0}' - \mu_{f_0} + O(2^{-\varepsilon_1n^{\varepsilon_2}})$ for some $\varepsilon_1,\varepsilon_2 > 0$ if $2 \mid f_0$.
\end{theorem}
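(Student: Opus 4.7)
The plan is to combine two ingredients: a lower bound on the density of everywhere locally soluble members of $\mc{F}_{2n+1}(f_0)$, and an identification of the 2-descent obstruction used to prove Theorem~\ref{thm-main} with a Brauer-Manin obstruction on the stacky curve $\mathscr{S}_F$.

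First, I would verify that for each prime $p \mid \scr{f}$, the $p$-adic density of $F \in \mc{F}_{2n+1}(f_0)(\BZ_p)$ with $\mathscr{S}_F(\BZ_p) \neq \varnothing$ equals $1 - 1/p^p$; this is a direct local computation using the defining equation $y^2 = F(x,z)$ and the fact that $v_p(f_0)$ is odd for $p \mid \scr{f}$. At primes $p \nmid \scr{f}$ and the archimedean place, local solubility is automatic (e.g.\ using Hensel at a smooth $\BZ_p$-point of $\mathscr{S}_F$ away from the vanishing locus of $F$). Plugging these local densities into the sieve developed for Theorem~\ref{thm-main} yields a lower bound of $\mu_{f_0}' + O(2^{-n})$ (respectively $\mu_{f_0}' + O(2^{-\varepsilon_1 n^{\varepsilon_2}})$ in the even case) on the density of $F$ that are everywhere locally soluble.

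Next, I would revisit the 2-descent underlying the proof of Theorem~\ref{thm-main}: to each primitive $\BZ$-point of $\mathscr{S}_F$ one associates a class $\delta$ in a square-class quotient of a suitable resolvent ring $R_F$, and the upper bound $\mu_{f_0}$ really controls the density of $F$ for which some such $\delta$ comes from a point on the 2-cover of $\mathscr{S}_F$ that it classifies. I would then attach to each $\delta$ a Brauer class $\alpha_\delta \in \on{Br}(\mathscr{S}_F)$ via the Kummer sequence applied to the presentation $\mathscr{S}_F = [\wt{S}_F/\mathbb{G}_m]$: the square class $\delta$ pulls back to an element of $H^1(\wt{S}_F,\mu_2)$, whose image in $\on{Br}(\wt{S}_F)$ descends to $\mathscr{S}_F$ by $\mathbb{G}_m$-equivariance. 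An adelic-pairing computation, mirroring the standard one for smooth hyperelliptic curves (cf.\ Stoll's work on 2-Selmer obstructions), then shows that an adelic point $(P_v)$ of $\mathscr{S}_F$ pairs trivially with $\alpha_\delta$ if and only if $(P_v)$ lifts to the 2-cover classified by $\delta$. Hence the condition ``no $\delta$ lifts to an adelic point on its 2-cover'' is detected by the Brauer-Manin obstruction. Subtracting the upper bound of Theorem~\ref{thm-main} from the lower bound on the density of everywhere locally soluble $F$ then yields the claimed density $\mu_{f_0}' - \mu_{f_0}$ (plus error) of $F$ for which $\mathscr{S}_F$ has a Brauer-Manin obstruction to the Hasse principle.

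The main obstacle I expect is the Brauer-theoretic identification on the stack $\mathscr{S}_F$. Although the 2-covering obstruction on smooth hyperelliptic curves is well known to come from a Brauer class, $\mathscr{S}_F$ carries $\tfrac{1}{2}$-points at the roots of $F$, and one must verify that $\alpha_\delta$ extends across these stacky points and that its local invariants there behave correctly. This should be manageable by working on the presentation $\wt{S}_F$, where the class is manifestly a square-class line bundle, and then transferring the conclusion back to $\mathscr{S}_F$ via $\mathbb{G}_m$-invariance; keeping careful track of ramification at the stacky points is the main technical subtlety.
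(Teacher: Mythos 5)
Your high-level structure matches the paper's: both proofs combine (i) a lower bound on the density of everywhere locally soluble members, and (ii) an identification of the $2$-descent obstruction with a Brauer--Manin obstruction, followed by subtracting the upper density from the sieve. Your local-density computation for (i) is essentially the paper's Proposition~\ref{prop-neededanother} (the paper only claims the lower bound $\mu_{f_0}'$ via avoiding the set of $F$ that vanish identically on $\BP^1(\BZ/p\BZ)$; your claim of an exact density $1-p^{-p}$ is a stronger statement, not needed, and not obviously true since some $F$ that vanish identically modulo $p$ could still have a primitive $\BZ_p$-point). Also be careful that what must be fed into the sieve is the density of $F$ for which the \emph{fake $2$-Selmer set} $\on{Sel}_{\on{fake}}^2(\scr{S}_F)$ is nonempty, not merely the density of $F$ with $\scr{S}_F(\BZ) \ne \varnothing$; these coincide for the upper bound because the same local conditions on orbits govern both (paper's Proposition~\ref{prop-notlast}), but your phrasing elides this.

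Where you genuinely diverge from the paper is step (ii). You propose constructing explicit Brauer classes $\alpha_\delta \in \on{Br}(\mathscr{S}_F)$ via a Kummer argument on the quotient presentation and then carrying out an adelic pairing computation, and you correctly flag the main obstacle: making sense of Brauer groups and local invariants at the $\tfrac{1}{2}$-points of the stack. The paper circumvents this entirely. Rather than ever defining $\on{Br}(\mathscr{S}_F)$, it \emph{defines} the Brauer--Manin set of $\mathscr{S}_F$ as the intersection $\wt{S}_F(\BA_\BQ)^{\on{int}} \cap \wt{S}_{F,\BQ}(\BA_\BQ)^{\on{Br}}$, where $\wt{S}_{F,\BQ}$ is the punctured affine surface (a smooth geometrically integral \emph{variety}), and then invokes the descent theorem of~\cite{BMadded} for open varieties: $\wt{S}_{F,\BQ}(\BA_\BQ)^{\on{Br}} \subset \wt{S}_{F,\BQ}(\BA_\BQ)^{f}$ for any torsor $f$ under a group of multiplicative type. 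Applying this to the pullbacks of the Bruin--Flynn $2$-coverings $\pi^\delta \colon C^\delta \to \scr{S}_F$ along $\wt{S}_F \to \scr{S}_F$ immediately gives the containment $\scr{S}_F(\BA_\BQ)^{\on{Br}} \subset \bigcup_{\delta \in \on{Sel}_{\on{fake}}^2(\scr{S}_F)} \pi^\delta(C^\delta(\BA_\BQ))$ (paper's Lemma~\ref{lem-selberg}), with no Brauer class on the stack ever constructed. Your route is conceptually sound and would likely yield a stronger result (an explicit Brauer class exhibiting the obstruction), but it requires developing Brauer theory for stacky curves; the paper's definitional maneuver makes this unnecessary and keeps the argument within classical territory. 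If you want to pursue your approach, the ramification bookkeeping at the stacky points is indeed the crux, and you would essentially be reproving a stacky analogue of the Stoll-type $2$-Selmer/Brauer pairing computation from scratch.
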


 The proportion $\mu_{f_0}'-\mu_{f_0}$, which can be thought of as a lower bound on the density of superelliptic equations with leading coefficient $f_0$ having a Brauer--Manin obstruction to solubility in the large-$n$ limit, approaches $\approx96.7\%$ as the number of prime factors of $\upkappa$ grows; see Table~\ref{tab-display}.

 The \emph{genus} of a stacky curve can be defined in terms of the Euler characteristic of an orbifold curve; see~\cite[section entitled ``Orbifolds, and the topology of $M$-curves'']{MR1479291}. In~\cite{BPpreprint}, Bhargava and Poonen prove that all (suitably defined) stacky curves $S$ of genus less than $\frac{1}{2}$ over $\BZ$ satisfy the Hasse principle; i.e., if $S(\BR) \neq \varnothing$ and $S(\BZ_p) \neq \varnothing$ for every prime $p \in \BZ$, then $S(\BZ) \neq \varnothing$. On the other hand, there is no guarantee that a stacky curve of genus at least $\frac{1}{2}$ satisfies the Hasse principle: indeed, for any separable integral binary form $F$ of degree $2n+1 \geq 3$, the genus of the associated stacky curve $\mathscr{S}_F$ is given by $\frac{2n+1}{4} > \frac{1}{2}$, and Theorem~\ref{thm-hasse} demonstrates that these stacky curves often fail the Hasse principle.  

\subsection{Method of Proof} \label{sec-method}

Our strategy is to transform the problem of counting soluble superelliptic equations into one of counting integral orbits of a certain representation by devising a suitable orbit construction. In \S\ref{sec-buildabear}, we prove that elements of $\mathscr{S}_F(\BZ)$ naturally give rise to certain square roots of the ideal class of the inverse different of a ring $R_F$ associated to the \mbox{form $F$.} Via a parametrization that we introduced in~\cite{Swpreprint}, these square roots correspond to certain integral orbits for the action of the split odd special orthogonal group $G$ on a space $V$ of self-adjoint operators $T$, where the characteristic polynomial of $T$ is equal to the \emph{monicized form} $F_{\on{mon}}(x,1) \defeq \frac{1}{f_0} \cdot F(x,f_0z)$.

The parametrization in~\cite{Swpreprint} is explicitly leading-coefficient-dependent and is thus particularly well-suited for applications concerning binary forms with fixed leading coefficient. By generalizing an equidistribution argument that we developed in joint work with Bhargava and Shankar (see~\cite{BSSpreprint}, where we used the very same parametrization to determine the second moment of the size of the $2$-Selmer group of elliptic curves), one can average the bounds in Theorems~\ref{thm-main} and~\ref{thm-hasse} over all leading coefficients and thus deduce analogues of these theorems for the full family of superelliptic equations of given degree. In this paper, we fix the leading coefficient for two reasons: doing so simplifies the orbit-counting and allows us to prove a stronger result, that within each \emph{thin} subfamily of superelliptic equations with fixed leading coefficient, most members fail the Hasse principle.

  In \S\ref{sec-algprems}, we discuss the representation of \mbox{$G$ on $V$,} the orbits of which also correspond to $2$-Selmer elements of the Jacobians of monic odd-degree hyperelliptic curves (see the work of Bhargava and Gross~\cite{MR3156850}), we define the ring $R_F$, and we recall the parametrization from~\cite{Swpreprint}. In \S\ref{sec-finally}, we prove Theorem~\ref{thm-main} in a series of three steps. First, in \S\ref{sec-quotebharg}, we adapt results of Bhargava and Gross (see~\cite[\S10]{MR3156850}) to obtain a bound on the number of $G(\BZ)$-orbits on $V(\BZ)$ satisfying local conditions. Second, in \S\ref{sec-overr}-\ref{sec-overzpqp}, we bound the local volume of the set of elements in $V(\BZ_v)$ that arise from $\BZ_v$-primitive solutions via the orbit construction in \S\ref{sec-buildabear} for each place $v$ of $\BQ$. This is the most technical step of the proof, and we handle it in a series of subsections as follows:
  \begin{itemize}
      \item In \S\ref{sec-overr}, we take $v = \infty$ and show that the relevant local volume contributes a factor of $O(2^n)$ to the final bound.
      \item In \S\ref{sec-overp}, we take $v = p \nmid 2f_0$ and obtain a bound on the relevant local volume; later, in \S\ref{sec-hillary}, we show that the product of this bound over all $p \nmid 2f_0$ contributes a factor of $O(2^{-\varepsilon_1 n^{\varepsilon_2}})$ to the final bound; this constitutes part of the saving when $2 \nmid f_0$ and all of the \mbox{saving when $2 \mid f_0$.}
      \item In \S\ref{sec-orbates}, we take $v = 2 \nmid f_0$ and show that the relevant local volume contributes a factor of $O(2^{-2n})$ to the final bound; this constitutes most of the saving when $2 \nmid f_0$.
      \item In \S\ref{sec-overzpqp}, we take $v = p \mid f_0$ and show that the relevant local volume contributes a factor of $O(1)$ to the final bound.
  \end{itemize}
  The main idea for the savings obtained in \S\S\ref{sec-overp}-\ref{sec-orbates} is that the number of solutions to $y^2 = F(x,z)$ modulo an integer $k$ is at most $\#\mathbb{P}^1(\BZ/k\BZ)$, which, for $k$ small relative to the degree $n$, is typically smaller than the total number of $G(\BZ/k\BZ)$-orbits having characteristic polynomial $F_{\mathrm{mon}}$ --- for example, when $k = p \nmid f_0$ is an odd prime, this number of orbits is $\asymp 2^m$, where $m$ is the number of distinct irreducible factors of $F_{\mathrm{mon}}$ modulo $p$. Third, in \S\ref{sec-hillary}, we combine the results of the first two steps to complete the proof of Theorem~\ref{thm-main}. Finally, in \S\ref{sec-bm}, we define the notions of $2$-descent obstruction and Brauer--Manin obstruction for superelliptic stacky curves, and we prove Theorem~\ref{thm-hasse}.

\section{Algebraic Preliminaries} \label{sec-algprems}

In this section, we recall several algebraic notions that feature in our construction of orbits from integral points on superelliptic stacky curves. We begin in \S\ref{sec-rep} by introducing the representation of $G$ on $V$. Then, in \S\ref{sec-ringsbins}, we define the ring $R_F$ associated to a binary form $F$, and in \S\ref{sec-param}, we recall a parametrization from~\cite{Swpreprint} of square roots of the ideal class of the inverse different of $R_F$. We finish in \S\ref{sec-fieldgens} by describing the parametrization over fields.

\subsection{A Representation of the Split Odd Special Orthogonal Group} \label{sec-rep}

Let $W$ be the $\BZ$-lattice of rank $2n+1$ equipped with a nondegenerate symmetric bilinear form \mbox{$[-,-] \colon W \times W \to \BZ$} that has signature $(n+1,n)$ after extending scalars to $\BR$. By~\cite[Chapter V]{MR0344216}), the lattice $W$ is unique up to isomorphism over $\BZ$, and there is a $\BZ$-basis
\begin{equation} \label{eq-Wbasis}
    W = \BZ\langle e_1, \dots, e_n,u, e_1', \dots, e_n' \rangle
\end{equation}
such that $[e_i, e_j] = [e_i',e_j'] = [e_i,u]= [e_i',u]= 0$, $[e_i, e_j'] = \delta_{ij}$, and $[u,u] = 1$ for all relevant pairs $(i,j)$. We denote by $A_0$ the matrix of $[-,-]$ with respect to the basis~\eqref{eq-Wbasis}.

For a $\BZ$-algebra $R$, let $W_R \defeq W \otimes_\BZ R$. For a field $k$ of characteristic not equal to $2$, the $k$-vector space $W_k$ equipped with the bilinear form $[-,-]$ is called the \emph{split orthogonal space} of dimension $2n+1$ and determinant $(-1)^n$ over $k$ and is unique up to $k$-isomorphism~\cite[\S6]{MR0506372}. The space $W_k$ is called ``split'' because it is a nondegenerate quadratic space containing a maximal isotropic subspace (defined over $k$) for the bilinear form $[-,-]$.

Let $T \in \End_R(W_R)$. Recall that the adjoint transformation $T^\dagger \in \End_R(W_R)$ of $T$ with respect to the form $[-,-]$ is determined by the formula $[Tv, w] = [v, T^\dagger w]$ for all $v,w \in W_R$, and that $T$ is said to be \emph{self-adjoint} if $T = T^\dagger$. If $T$ is expressed as a matrix with respect to the basis~\eqref{eq-Wbasis}, then $T$ is self-adjoint with respect to the form $[-,-]$ if and only if $T^TA_0 = A_0T$.

Let $V$ be the affine space defined over $\Spec \BZ$ whose $R$-points are given by
$$V(R) = \{T \in \End_R(W_R) : T = T^\dagger \}$$
for any $\BZ$-algebra $R$.
An $R$-automorphism $g \in \Aut_R(W_R)$ is called \emph{orthogonal} with respect to the form $[-,-]$ if $[g(v),g(w)] = [v,w]$ for all $v,w \in W_R$. If $g$ is expressed as a matrix with respect to the basis~\eqref{eq-Wbasis}, then $g$ is orthogonal with respect to the form $[-,-]$ if and only if $g^TA_0g = A_0$. Let $G \defeq \on{SO}(W)$ be the group scheme defined over $\Spec \BZ$ whose $R$-points are given by
$$G(R) = \{g \in \Aut_R(W_R) : g\text{ is orthogonal with respect to $[-,-]$ and }\det g = 1\}$$
for any $\BZ$-algebra $R$. The group scheme $G$ is known as the \emph{split odd special orthogonal group} of the lattice $W$, and it acts on $V$ by conjugation: for $g \in G(R)$ and $T \in V(R)$, the map $gTg^{-1} \in \End_R(W_R)$ is readily checked to be self-adjoint with respect to $[-,-]$ and is therefore an element of $V(R)$. We thus obtain a linear representation $G \to \on{GL}(V)$ of dimension $\dim V = 2n^2 + 3n + 1$.

Since $G$ acts on $V$ by conjugation, the characteristic polynomial
\begin{equation*} 
\on{ch}(T) \defeq \det(x \cdot \id - T) = x^{2n+1} + \sum_{i = 1}^{2n+1} c_i x^{2n+1-i} \in R[x]
\end{equation*}
is invariant under the action of $G(R)$ for each $T \in V(R)$. In fact, by~\cite[\S8.3, part (VI) of \S13.2]{MR0453824}, the coefficients $c_1, \dots, c_{2n+1}$, which have respective degrees $1, \dots, 2n+1$, freely generate the ring of $G$-invariant functions on $V$.


\subsection{Rings Associated to Binary Forms} \label{sec-ringsbins}
For the rest of \S\ref{sec-algprems}, let $R$ be a principal ideal domain (we typically take $R$ to be $\BZ$, $\BQ$, $\BZ_p$, $\BZ/p\BZ$, or $\BQ_p$, where $p \in \BZ$ is a prime), and let $k$ be the fraction field of $R$. Let $n \geq 1$, and let $$F(x,z) = \sum_{i = 0}^{2n+1} f_i x^{2n+1-i}z^i \in R[x,z]$$ be a separable binary form of degree $2n+1$ with leading coefficient $f_0 \in R\smallsetminus\{0\}$. 

Consider the \'{e}tale $k$-algebra $$K_F \defeq k[x]/(F(x,1)).$$
Let $\theta$ denote the image of $x$ in $K_F$. For each $i \in \{1, \dots, 2n\}$, let $p_i$ be the polynomial defined by
$$p_i(t) \defeq \sum_{j = 0}^{i-1} f_j t^{i-j},$$
and let $\zeta_i \defeq p_i(\theta)$. To the binary form $F$, there is a naturally associated free $R$-submodule $R_F \subset K_F$ having rank $2n+1$ and $R$-basis given by
\begin{equation} \label{eq-rfbasis}
R_F \defeq R \langle 1, \zeta_1, \zeta_2, \dots, \zeta_{2n} \rangle.
\end{equation}
The module $R_F$ is of significant interest in its own right and has been studied extensively in the literature. In~\cite[proof of Lemma~3]{MR0306119}, Birch and Merriman proved that the discriminant of $F$ is equal to the discriminant of $R_F$, and in~\cite[Proposition 1.1]{MR1001839}, Nakagawa proved that $R_F$ is actually a ring (and hence an order in $K_F$) having multiplication table
\begin{equation} \label{eq-multtable}
\zeta_i\zeta_j = \sum_{k = j+1}^{\min\{i+j,2n+1\}} f_{i+j-k}\zeta_k - \sum_{k = \max\{i+j-(2n+1),1\}}^i f_{i+j-k}\zeta_k,
\end{equation}
where $1 \leq i \leq j \leq 2n$ and we take $\zeta_0 = 1$ and $\zeta_{2n+1} = -f_{2n+1}$ for the sake of convenience. (To be clear, these results of Nakagawa are stated for the case of irreducible $F$, but as noted in~\cite[\S2.1]{MR2763952}, their proofs continue to hold when $F$ is not irreducible.)

Also contained in $K_F$ is a natural family of free $R$-submodules $I_F^j$ of rank $2n+1$ for each $j \in \{0, \dots, 2n\}$, having $R$-basis given by
\begin{equation} \label{eq-idealdef}
I_F^j \defeq R\langle1,\theta, \dots, \theta^j,\zeta_{j+1}, \dots, \zeta_{2n} \rangle.
\end{equation}
Note that $I_F^0 = R_F$ is the unit ideal. By~\cite[Proposition A.1]{MR2763952}, each $I_F^j$ is an $R_F$-module and hence a fractional ideal of $R_F$; moreover, the notation $I_F^j$ makes sense, because $I_F^j$ is equal to the $j^{\mathrm{th}}$ power of $I_F^1$. By~\cite[Proposition A.4]{MR2763952}, the fractional ideals $I_F^j$ are invertible precisely when the form $F$ is primitive, in the sense that $\sum_{i = 0}^{2n+1}Rf_i = R$. It is a result of Simon (see~\cite[Proposition~14]{MR2523319}, cf.~\cite[Theorem~2.4]{MR2763952}) that $$J_F \defeq I_F^{2n-1}$$ represents the ideal class of the inverse different of $R_F$.

Given a fractional ideal $I$ of $R_F$ having a specified basis (i.e., a \emph{based fractional ideal}), the \emph{norm} of $I$, denoted by $\on{N}(I)$, is defined to be the determinant of the $k$-linear transformation taking the basis of $I$ to the basis of $R_F$ in~\eqref{eq-rfbasis}. It is easy to check that $\on{N}(I_F^j) = f_0^{-j}$ for each $j$ with respect to the basis in~\eqref{eq-idealdef}. The norm of $\kappa \in K_F^\times$ is $\on{N}(\kappa) \defeq \on{N}(\kappa \cdot I_F^0)$ with respect to the basis $\langle \kappa, \kappa \cdot \zeta_1, \dots, \kappa \cdot \zeta_{2n}\rangle$ of $\kappa \cdot I_F^0$. Note that we have the multiplicativity relation
\begin{equation} \label{eq-normsmult}
\on{N}(\kappa \cdot I) = \on{N}(\kappa) \cdot \on{N}(I)
\end{equation}
for any $\kappa \in K_F^\times$ and fractional ideal $I$ of $R_F$ with a specified basis.

We now explain how the objects $K_F$, $R_F$, and $I_F^j$ transform under the action of $\gamma \in \on{SL}_2(R)$ on binary forms of degree $2n+1$ defined by $\gamma \cdot F = F((x,z) \cdot \gamma)$. If $F' = \gamma \cdot F$, then $\gamma$ induces an isomorphism $K_F \simeq K_{F'}$, under which the rings $R_F$ and $R_{F'}$ happen to be identified with each other (see~\cite[Proposition 1.2]{MR1001839} for a direct proof and~\cite[\S2.3]{MR2763952} for a geometric argument). On the other hand, the ideals $I_F^j$ and $I_{F'}^j$ are isomorphic as $R_F$-modules but may \emph{not necessarily} be identified under the isomorphism $K_F \simeq K_F'$. Indeed, as explained in~\cite[(7)]{thesource}, these ideals are related by the following explicit rule: if $\gamma = \left[\begin{smallmatrix} a & b \\ c & d\end{smallmatrix}\right]$, then for each $j \in \{0, \dots, 2n\}$, the composition
\begin{equation} \label{eq-idealident}
\begin{tikzcd}
I_F^j \arrow[hook]{r}{\phi_{j,\gamma}} &  K_F \arrow{r}{\sim} & K_{F'}
\end{tikzcd}
\end{equation}
is an injective map of $R_F$-modules having image equal to $I_{F'}^{j}$, where $\phi_{j,\gamma}$ sends each $\delta \in I_F^j$ to $(-b\theta+a)^{-j} \cdot \delta \in K_F$. When $j = 0$, we recover the identification of $R_F = I_F^0$ \mbox{with $R_{F'} = I_{F'}^0$ from~\eqref{eq-idealident}.}

\subsection{Parametrization of Square Roots of the Class of $J_F$} \label{sec-param}

We say that a based fractional ideal $I$ of $R_F$ is a \emph{square root of the class of the inverse different} if there exists $\delta \in K_F^\times$ such that
\begin{equation} \label{eq-sqrtconds}
    I^2 \subset \delta \cdot J_F\quad \text{and}\quad  \on{N}(I)^2 = \on{N}(\delta) \cdot \on{N}(J_F)
\end{equation}
We now recall an orbit parametrization for such square roots that we introduced in~\cite{Swpreprint}. To do this, let $\wt{H}_F$ be the set of pairs $(I, \delta)$ satisfying~\eqref{eq-sqrtconds}. Let $H_F = \wt{H}_F/\hspace*{-2pt}\sim$, where the equivalence relation $\sim$ is defined as follows: $(I_1, \delta_1) \sim (I_2, \delta_2)$ if and only if there exists $\kappa \in K_F^\times$ such that the based fractional ideals $I_1$ and $\kappa \cdot I_2$ are equal up to an $\on{SL}_{2n+1}(R)$-change-of-basis and such that $\alpha_1 = \kappa^2 \cdot \alpha_2$. Now, take $(I, \delta) \in H_F$, and consider the symmetric bilinear form
\begin{equation} \label{eq-defbilin}
\langle-,-\rangle \colon I \times I \to K_F, \quad (\alpha, \beta) \mapsto \langle \alpha, \beta \rangle =\delta^{-1} \cdot \alpha\beta.
\end{equation}
Let $\pi_{2n-1},\pi_{2n} \in \Hom_{R}(J_F,R)$ be the maps defined on the $R$-basis~\eqref{eq-idealdef} of $J_F$ by
\begin{align*}
& \pi_{2n-1}(\theta^{2n-1}) - 1 = \pi_{2n-1}(\zeta_{2n}) = \pi_{2n-1}(\theta^i) = 0 \text{ for each } i \in \{0, \dots, 2n-2\} , \text{ and}\\
& \pi_{2n}(\zeta_{2n}) + 1 = \pi_{2n}(\theta^i) = 0 \text{ for each } i \in \{0, \dots, 2n-1\}.
\end{align*}
Let $A$ and $B$ respectively denote the matrices representing the symmetric bilinear forms $\pi_{2n} \circ \langle -, - \rangle \colon I \times I \to R$ and $\pi_{2n-1} \circ \langle -, - \rangle \colon I \times I \to R$ with respect to the $R$-basis of $I$, and observe that $A$ and $B$ are symmetric of dimension $(2n+1) \times (2n+1)$ and have entries in $R$. We have thus constructed a map of sets
\begin{equation*}
\mathsf{orb}_F \colon H_F \longrightarrow \left\{\text{$\on{SL}_{2n+1}(R)$-orbits on $R^2 \otimes_R \Sym_2 R^{2n+1}$}\right\},
\end{equation*}
where by $R^2 \otimes_R \on{Sym}_2 R^{2n+1}$ we mean the space of pairs of $(2n+1) \times (2n+1)$ symmetric matrices with entries in $R$. The following result characterizes the map $\mathsf{orb}_F$:
\begin{theorem}[\protect{\cite[Theorems~14 and 18, Proposition~17 and 20]{Swpreprint}}] \label{thm-theconstruction}
The map $\mathsf{orb}_F$ defines a bijection from $H_F$ to the set of all those $\on{SL}_{2n+1}(R)$-orbits of pairs $(A,B) \in R^2 \otimes_R \Sym_2 R^{2n+1}$ such that:
\begin{enumerate}
\item[$(\mathrm{a})$] $(-1)^n \cdot \det(x \cdot A + z \cdot B) = F_{\on{mon}}(x,z) \defeq \frac{1}{f_0}\cdot F(x,f_0z)$; and
\item[$(\mathrm{b})$] $p_i\big(\frac{1}{f_0} \cdot -A^{-1}B\big)$ is a matrix with entries in $R$ for each $i \in \{1, \dots, 2n\}$.
\end{enumerate}
Moreover, for any $(I, \delta) \in H_F$, the stabilizer in $\on{SL}_{2n+1}(R)$ of $\mathsf{orb}_F(I, \delta)$ contains a subgroup isomorphic to the group $R_F^\times[2]_{\on{N}\equiv 1} \defeq \{\rho \in R_F^\times : \rho^2 = 1 = \on{N}(\rho)\}$.
\end{theorem}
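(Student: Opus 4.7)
The plan is to construct $\mathsf{orb}_F$ and an explicit inverse, verify compatibility with the equivalence relations on both sides, and finally read off the stabilizer directly from the construction.

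\textbf{Forward direction.} For $[(I, \delta)] \in H_F$, fix an $R$-basis of $I$ and represent the $K_F$-valued symmetric form $\langle \alpha, \beta\rangle = \delta^{-1}\alpha\beta$ as a matrix with entries in $I_F^{2n-1}$ (using the hypothesis $I^2 \subset \delta \cdot I_F^{2n-1}$); applying $\pi_{2n-1}$ and $\pi_{2n}$ coefficient-wise produces the symmetric matrices $A, B \in \Sym_2 R^{2n+1}$. The norm hypothesis $\on{N}(I)^2 = \on{N}(\delta) \on{N}(I_F^{2n-1})$ combined with $\on{N}(I_F^{2n-1}) = f_0^{-(2n-1)}$ yields $\det A = \pm 1$, so $A$ is invertible. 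Condition~(1) is then a direct calculation: selecting a convenient basis of $I$ and using the multiplication table~\eqref{eq-multtable} to expand $\delta^{-1}\alpha_i\alpha_j$ in the basis of $I_F^{2n-1}$, one computes $\det(xA + zB)$ to coincide with $\pm F_{\on{mon}}(x, z)$, recognizing the right-hand side as the characteristic polynomial of multiplication by $f_0\theta$ on $K_F$. For condition~(2), observe that $-BA^{-1}$ represents multiplication by $f_0\theta$ on $I$ in the chosen basis; therefore $p_i(-BA^{-1}/f_0)$ represents multiplication by $\zeta_i \in R_F$, and since $I$ is an $R_F$-module this acts $R$-integrally.

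\textbf{Inverse direction.} Given a pair $(A, B)$ satisfying (1) and (2), set $T \defeq -A^{-1}B$. Condition~(1) forces $\det A = \pm 1$ and gives $T$ characteristic polynomial $\pm F_{\on{mon}}(x, 1)$, so the assignment $f_0\theta \mapsto T$ extends to a $k$-algebra homomorphism $K_F \hookrightarrow \on{End}_k(k^{2n+1})$ under which $k^{2n+1}$ is free of rank $1$ over $K_F$. Condition~(2) upgrades $R[T]$-integrality to $R_F$-integrality, endowing the lattice $I \defeq R^{2n+1}$ with the structure of a fractional $R_F$-ideal. The scalar $\delta \in K_F^\times$ is then recovered by demanding $A = \pi_{2n} \circ (\delta^{-1} \cdot -)$ as a symmetric bilinear form on $I$, which pins down $\delta$ uniquely modulo the equivalence $\sim$; the norm condition $\on{N}(I)^2 = \on{N}(\delta) \on{N}(I_F^{2n-1})$ then follows from $\det A = \pm 1$.

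\textbf{Equivalences and stabilizer.} If $(I, \delta) \sim (\kappa I, \kappa^2 \delta)$, then multiplication by $\kappa$ is a $K_F$-linear isomorphism intertwining the bilinear forms, so the resulting pairs are $\on{GL}_{2n+1}(R)$-equivalent; after normalizing so that both $A$ matrices coincide with the standard form $A_0$, the residual equivalence lies in $G(R) = \on{SO}(A_0)$. Conversely, any $G(R)$-equivalence corresponds to a reselection of $R$-basis of $I$, which via the $R_F$-module structure is multiplication by some $\kappa \in K_F^\times$. For the stabilizer claim, any $u \in R_F^\times[2]$ gives an $R$-automorphism $\alpha \mapsto u\alpha$ of $I$ satisfying $\langle u\alpha, u\beta\rangle = u^2\langle\alpha,\beta\rangle = \langle\alpha,\beta\rangle$, so $u$ acts by an element of $\on{GL}_{2n+1}(R)$ fixing both $A$ and $B$, yielding the injection $R_F^\times[2] \hookrightarrow \on{Stab}(\mathsf{orb}_F(I, \delta))$.

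\textbf{Main obstacle.} The most delicate step is the precise matching in condition~(2): one must show that $R$-integrality of $p_i(-BA^{-1}/f_0)$ for each $i \in \{1, \dots, 2n\}$ is exactly equivalent to the lattice $I$ being an $R_F$-module rather than merely an $R[f_0\theta]$-module. Since $R[f_0\theta]$ is typically a proper subring of $R_F$, this discrepancy is nontrivial and requires careful exploitation of the multiplication table~\eqref{eq-multtable} and the specific ideal basis~\eqref{eq-idealdef}. Similarly, verifying that the forward and inverse constructions are mutually inverse reduces to checking compatibility of the various bases and norm normalizations; while straightforward in principle, this bookkeeping constitutes the bulk of the work and is best organized around the explicit generators $\zeta_i$ rather than general elements of $R_F$.
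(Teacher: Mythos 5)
The paper does not prove this theorem; it imports it as a black box from the author's companion preprint {\tt [Swpreprint]} (cited as Theorem~9, Proposition~12, Theorem~13 there). What the source does contain is a commented-out fragment of an earlier draft covering the forward direction and the stabilizer claim. Measured against that fragment, your forward direction and stabilizer argument match the author's intent exactly: you both realize $-BA^{-1}$ as the matrix of multiplication by $f_0\theta$ on $I$, deduce $\det A = \pm 1$ from the norm condition $\on{N}(I)^2 = \on{N}(\delta)\on{N}(I_F^{2n-1})$ together with $\on{N}(I_F^{2n-1}) = f_0^{-(2n-1)}$, and observe that any $u \in R_F^\times[2]$ preserves $\langle-,-\rangle$ because $u^2 = 1$.

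Your inverse direction, however, is under-specified at the one point the commented-out fragment also glosses over, and you have mis-located the genuine delicacy. You flag condition~(2) (the upgrade from $R[T]$-integrality to $R_F$-integrality) as the hard part, but that step is essentially immediate since $p_i(\theta) = \zeta_i$ and $R_F = R\langle 1, \zeta_1, \dots, \zeta_{2n}\rangle$. The real gap is in recovering the containment $I^2 \subset \delta \cdot I_F^{2n-1}$ from the $R$-integrality of $A$ and $B$. Integrality of $A$ and $B$ gives only that $\pi_{2n}(\delta^{-1}\alpha\beta)$ and $\pi_{2n-1}(\delta^{-1}\alpha\beta)$ lie in $R$ for all $\alpha, \beta \in I$; to conclude that the full element $\delta^{-1}\alpha\beta$ lies in $I_F^{2n-1}$ (i.e.\ that \emph{every} coefficient in the basis~\eqref{eq-idealdef} is integral, not just the top two) one must invoke the $R_F$-module structure on $I$ to propagate integrality to $\pi_{2n}(\zeta_i \cdot \delta^{-1}\alpha\beta)$ for all $i$, and then appeal to a duality statement of Wood--Nakagawa type identifying the lattice dual to $R_F$ under $\pi_{2n}$ with $I_F^{2n-1}$. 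Without spelling out that duality, your step ``the norm condition then follows from $\det A = \pm 1$'' is asserted rather than established, and the two constructions are not yet shown to be mutually inverse. This is repairable, but it is the genuine content of the bijectivity claim and should not be waved through as bookkeeping.
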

For the purposes of this paper, it turns out (see Lemma~\ref{lem-split}) that we only need the restriction of the parametrization in Theorem~\ref{thm-theconstruction} to those pairs $(A,B)$ such that $A$ defines a split quadratic form over $k$. This restricted parametrization can be expressed in terms of the action of $G(R)$ on $V(R)$. Indeed, suppose we are given a pair $(A,B) \in R^2 \otimes_{R} \Sym_2 R^{2n+1}$ such that the \emph{invariant form} $\det(x \cdot A + z \cdot B)$ of $(A,B)$ is equal to $(-1)^n \cdot F_{\on{mon}}(x,z)$ and such that $A$~defines a quadratic form that is split over $k$. Then there exists $g \in \on{GL}_{2n+1}(R)$ satisfying $gAg^T = A_0$ (by the uniqueness statement in the first paragraph of \S\ref{sec-rep}), and the matrix $T = (g^T)^{-1}\cdot -A^{-1}B \cdot g^T$ is self-adjoint with respect to the matrix $A_0$ and has characteristic polynomial $F_{\on{mon}}(x,1)$. We thus obtain the following correspondence, the proof of which is immediate:

\begin{proposition} \label{prop-slo}
The map $(A,B) \mapsto T$ described above defines a natural bijection from the set of $\on{SL}_{2n+1}(R)$-orbits on pairs $(A,B) \in R^2 \otimes_R \on{Sym}_2 R^{2n+1}$ with invariant form $(-1)^n \cdot F_{\on{mon}}$ such that $A$ defines a split quadratic form over $k$ to the set of $G(R)$-orbits on $V(R)$ with characteristic polynomial $F_{\on{mon}}(x,1)$. Under this bijection, we have that $\on{Stab}_{\on{SL}_{2n+1}(R)}(A,B) = \on{Stab}_{G(R)}(T)$.
\end{proposition}
\begin{remark}
As explained in~\cite[\S2.2]{Swpreprint}, the proof of Theorem~\ref{thm-theconstruction} is inspired by~\cite{MR3187931}, where Wood derives a similar parametrization in which the multiplication table of a fractional ideal of $R_F$ gives rise to a pair of symmetric matrices with invariant form equal to $F$ (up to a sign). In~\cite[\S2]{thesource}, Bhargava uses Wood's parametrization to prove analogues of Theorems~\ref{thm-main} and~\ref{thm-hasse} for rational points on hyperelliptic curves.
\end{remark}

\subsection{Orbits over Fields} \label{sec-fieldgens}

When $R = k$ is a field, the parametrization in Theorem~\ref{thm-theconstruction} simplifies considerably. Indeed, every fractional ideal of $R_F = K_F$ is equal to $K_F$, so every element of $H_F$ is of the form $(K_F,\delta)$, where $f_0 \cdot \delta \in (K_F^\times/K_F^{\times2})_{\on{N}\equiv1} \defeq \{ \rho \in K_F^\times : \on{N}(\rho) \in k^{\times2}\}/K_F^{\times2}$. Moreover, condition $\mathrm{(b)}$ in Theorem~\ref{thm-theconstruction} holds trivially in this case, implying that every $\on{SL}_{2n+1}(k)$-orbit on $k^2 \otimes_k \on{Sym}_2 k^{2n+1}$ with invariant form $F_{\on{mon}}(x,1)$ arises from an element of $H_F$. We thus obtain the following result, which gives a convenient description of the orbits of $\on{SL}_{2n+1}(k)$ on $k^2 \otimes_k \on{Sym}_2 k^{2n+1}$ that arise via the parametrization:

\begin{proposition}[\protect{\cite[Proposition~36]{Swpreprint}}] \label{prop-rationalcorresp}
When $R = k$ is a field, the set of $\on{SL}_{2n+1}(k)$-orbits on $k^2 \otimes_k \on{Sym}_2 k^{2n+1}$ with invariant form $F_{\on{mon}}$ is in bijection with the set $(K_F^\times/K_F^{\times 2})_{\on{N} \equiv 1}$. Under this bijection, the orbit corresponding to the pair $(K_F, \delta) \in H_F$ is sent to $f_0 \cdot \delta$.
\end{proposition}
In what follows, we say that an orbit of $\on{SL}_{2n+1}(k)$ on $k^2 \otimes_k \on{Sym}_2 k^{2n+1}$ (or of $G(k)$ on $V(k)$) is \emph{distinguished} if it corresponds to $1 \in (K_F^\times/K_F^{\times2})_{\on{N}\equiv1}$ under the bijection of Proposition~\ref{prop-rationalcorresp}. In geometric terms, the orbit of a pair $(A,B) \in k^2 \otimes_k \on{Sym}_2 k^{2n+1}$ is distinguished if and only if $A$ and $B$ share a maximal isotropic space defined over $k$ (see~\cite[\S4]{MR3156850}).

 For ease of notation, write $f = F_{\on{mon}}$. Given any $(A,B) \in k^2 \otimes_k \on{Sym}_2 k^{2n+1}$ with invariant form $f$, we may regard $A$ and $B$ as quadratic forms cutting out a pair of quadric hypersurfaces $Q_A$ and $Q_B$ in $\BP_k^{2n}$. Let $\scr{F}_{(A,B)}$ denote the Fano scheme parametrizing $(n-1)$-dimensional linear spaces contained in the base locus $Q_A \cap Q_B$ of the pencil of quadric hypersurfaces spanned by $Q_A$ and $Q_B$. The following proposition states that $\scr{F}_{(A,B)}$ is a torsor of the Jacobian of $\mathscr{S}_F$:


\begin{proposition} \label{prop-torsor}
Let $(A,B) \in k^2 \otimes_k \on{Sym}_2 k^{2n+1}$ with invariant form $f$. Then, if $\#k$ is sufficiently large, $\scr{F}_{(A,B)}$ is a torsor over $k$ of the Jacobian of the stacky curve $\mathscr{S}_F$.
\end{proposition}
\begin{proof}
The Jacobian of a stacky curve $\scr{X}$ is defined to be the $0^{\mathrm{th}}$-degree component $\on{Pic}^0(\scr{X})$ of the group $\on{Pic}(\scr{X})$ of divisors on $\scr{X}$ modulo principal divisors. Let $k_{\on{sep}}$ be a separable closure of $k$, and let $\theta_1, \dots, \theta_{2n+1} \in \BP^1_k(k_{\on{sep}})$ be the roots of $F$. Applying the correspondence between isomorphism classes of line bundles and divisor classes to~\cite[proof of Proposition 2.2]{stax} yields that $\on{Pic}^0(\mathscr{S}_F)$ is the group generated by the classes of the divisors $(\tfrac{1}{2}\theta_i - \tfrac{1}{2}\theta_j)$ subject to the relations $2 \cdot (\tfrac{1}{2}\theta_i - \tfrac{1}{2}\theta_j) = 0$ for $1 \leq i < j \leq 2n+1$; i.e., we have
$$\on{Pic}^0(\mathscr{S}_F) =  (\BZ/2\BZ) \langle (\tfrac{1}{2}\theta_i - \tfrac{1}{2}\theta_j) : 1 \leq i < j \leq 2n+1 \rangle.$$
There is a natural right action of the absolute Galois group $G_k \defeq \on{Gal}(k_{\on{sep}}/k)$ on $\on{Pic}^0(\mathscr{S}_F)$: for $\sigma \in G_k$, we have $(\tfrac{1}{2} \theta_i - \tfrac{1}{2} \theta_i) \cdot \sigma = (\tfrac{1}{2} (\sigma^{-1} \cdot \theta_i) - \tfrac{1}{2} (\sigma^{-1} \cdot \theta_j))$.  

A result of Wang (see~\cite[Corollary 2.5]{MR3800357}) implies that, as long as $\#k$ is sufficiently large, the Fano scheme $\scr{F}_{(A,B)}$ is a torsor over $k$ of the $2$-torsion subgroup $J[2]$ of the Jacobian $J$ of the monic odd-degree hyperelliptic curve $y^2 = f(x,1)$. Thus, to prove that $\scr{F}_{(A,B)}$ is a torsor of $\on{Pic}^0(\mathscr{S}_F)$, it suffices to show that there is a $G_k$-equivariant isomorphism $\on{Pic}^0(\mathscr{S}_F) \simeq J[2]$. But this is clear: the divisor classes $(\theta_i - \theta_j)$ for $1 \leq i < j \leq 2n+1$ generate $J[2](k_{\on{sep}})$ over $\BZ/2\BZ$.
\end{proof}
Note in particular that the $\on{SL}_{2n+1}(k)$-orbit of $(A,B)$ is distinguished if and only if $\scr{F}_{(A,B)}$ is the trivial torsor of the Jacobian of $\mathscr{S}_F$ (which happens if and only if $\scr{F}_{(A,B)}(k) \neq \varnothing$).
\begin{remark}
 As it happens, Proposition~\ref{prop-torsor} admits an analogue for pencils of even-dimensional quadric hypersurfaces; see the work of Wang~\cite{MR3800357} (generalizing previous work of Reid~\cite[Theorem 4.8]{Reid72}), showing that the Fano scheme of maximal linear spaces in the base locus of a pencil of quadric hypersurfaces generated by a pair $(A,B) \in k^2 \otimes_k \on{Sym}_2 k^{2n}$ is a torsor of the Jacobian of the hyperelliptic curve cut out by the equation $y^2 = (-1)^n \cdot \det(x \cdot A+ z \cdot B)$.
\end{remark}


We conclude this section by explaining how to visualize the simply transitive action of $\on{Pic}^0(\mathscr{S}_F)$ on $\scr{F}_{(A,B)}(k_{\on{sep}})$ from Proposition~\ref{prop-torsor}. For each $i \in \{1, \dots, 2n+1\}$, let $Q_i$ be the singular fiber lying over the point $\theta_i$ in the pencil of quadrics spanned by $Q_A$ and $Q_B$. Suppose that $T$ is generic, so that each of the quadrics $Q_i$ is a simple cone with cone point $q_i$. Let $\scr{F}_{(A,B)}(k_{\on{sep}}) = \{p_1, \dots, p_{2^{2n}}\}$, and for each $\ell \in \{1, \dots, 2^{2n}\}$, let $\ol{p}_\ell$ denote the corresponding linear subspace of $\BP_{k_{\on{sep}}}^{2n}$. We illustrate this setup in the case where $n = 1$ in Figure~\ref{fig-colorista}.

We define an action of $\on{Pic}^0(\mathscr{S}_F)$ on $\scr{F}_{(A,B)}(k_{\on{sep}})$ by first specifying how each $(\tfrac{1}{2}\theta_i)$ acts on $\scr{F}_{(A,B)}(k_{\on{sep}})$. Let $L_{i\ell} \subset \BP_{k_{\on{sep}}}^{2n}$ be the linear span of the $(n-1)$-plane $\ol{p}_\ell$ and the point $q_i$ for each pair $(i,\ell)$. By~\cite[Theorem~3.8]{Reid72}, there exists precisely one element $p_{i(\ell)} \in \scr{F}_{(A,B)}(k_{\on{sep}}) \smallsetminus \{p_\ell\}$ such that $\ol{p}_{i(\ell)} \subset L_{i\ell}$. For each $i$, let $(\tfrac{1}{2}\theta_i)$ act on $\scr{F}_{(A,B)}$ by swapping $p_\ell$ and $p_{i(\ell)}$ for each $\ell$. Then for each pair $(i,j)$, the divisor $(\tfrac{1}{2}\theta_i - \tfrac{1}{2}\theta_j)$ acts on $\scr{F}_{(A,B)}$ by swapping $p_\ell$ with $p_{i(j(\ell))} = p_{j(i(\ell))}$ for each $\ell$. When $n = 1$, for example, the divisor $(\tfrac{1}{2}\theta_1 - \tfrac{1}{2}\theta_2)$ acts on $\scr{F}_{(A,B)}$ by sending $(p_1, p_2, p_3, p_4)$ to $(p_3, p_4, p_1, p_2)$, as we demonstrate in Figure~\ref{fig-colorista}.
\FloatBarrier
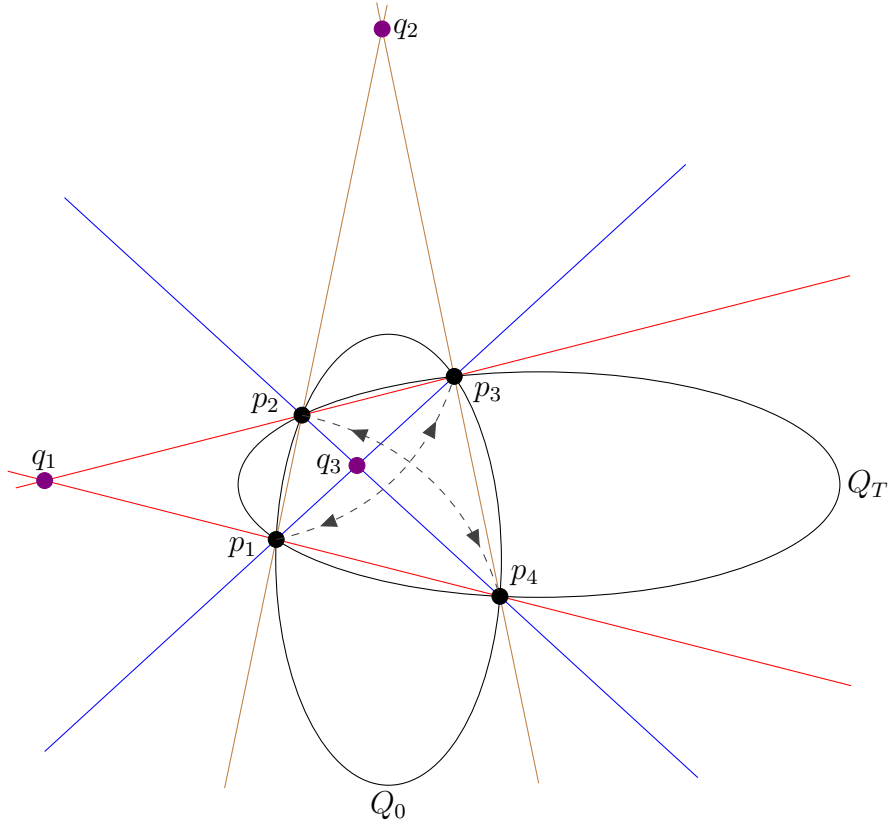
\begin{figure}
\centering
   \begin{tikzpicture}
   \draw (0,0) ellipse (4cm and 1.5cm);
   \draw (-2,-4.6) node[anchor=south]{$Q_A$};
   \draw (-2,-1) ellipse (1.5cm and 3 cm);
   \draw (4.8,0) node[anchor=east]{$Q_B$};
   \draw [add= 1.2 and 1.57, red] (-3.49393,-0.730286) to (-0.519919,-1.48727);
   \draw [add= 1.88 and 2.6, red] (-3.15087,0.924055) to (-1.12678,1.43926);
   \draw (-2.46989,0.274096) node[anchor=east]{$q_3$};
   \draw [add= 1.3 and 1.3, blue] (-3.49393,-0.730286) to (-1.12678,1.43926);
   \draw [add= 1.2 and 1, blue] (-3.15087,0.924055) to (-0.519919,-1.48727);
   \draw (-6.57213,0.0532183) node[anchor=south]{$q_1$};
   \draw [add= 2 and 3.3, brown] (-3.49393,-0.730286) to (-3.15087,0.924055);
   \draw [add= 1.7 and 0.85, brown] (-1.12678,1.43926) to (-0.519919,-1.48727);
   \draw (-2.0854,6.0621) node[anchor=west]{$q_2$};
   \filldraw (-3.49393,-0.730286) circle (3pt);
   \filldraw (-3.15087,0.924055) circle (3pt);
   \filldraw (-1.12678,1.43926) circle (3pt);
   \filldraw (-0.519919,-1.48727) circle (3pt);
      \draw (-3.60393,-0.560286) node[anchor=north east]{$p_1$};
   \draw (-3.30087,0.804055) node[anchor= south east]{$p_2$};
   \draw (-1.00678,1.49926) node[anchor = north west]{$p_3$};
   \draw (-0.519919,-1.48727) node[anchor = south west]{$p_4$};
   \filldraw [violet] (-2.0854,6.0621) circle (3pt);
   \filldraw [violet] (-6.57213,0.0532183) circle (3pt);
   \filldraw [violet] (-2.41989,0.254096) circle (3pt);
   \path [draw=darkgray,dashed,postaction={on each segment={rightend arrow=darkgray, leftend arrow = darkgray}}]
  (-3.49393,-0.730286) to [bend right] (-1.12678,1.43926)
  (-3.15087,0.924055) to [bend left] (-0.519919,-1.48727);
   \end{tikzpicture}
   \caption{The pencil of conics spanned by $Q_A$ and $Q_B$ in $\BP_{k_{\on{sep}}}^2$, with singular fibers $Q_1$, $Q_2$, and $Q_3$ having cone points $q_1$, $q_2$, and $q_3$, respectively. In this case, $\scr{F}_{(A,B)}(k_{\on{sep}}) = Q_A \cap Q_B = \{p_1, p_2, p_3, p_4\}$. The dashed arrows display the action of the degree-$0$ divisor $(\tfrac{1}{2}\theta_1 - \tfrac{1}{2}\theta_2)$ on $\scr{F}_{(A,B)}$.}
      \label{fig-colorista}
\end{figure}

\section{Construction of an Integral Orbit from a Primitive Integer Solution} \label{sec-buildabear}

Take $R$ and $F$ as in \S\ref{sec-ringsbins}, and assume that $k$ is of characteristic $0$. In \S\ref{sec-theconstrsec}, we prove that points $(x_0,y_0,z_0) \in \mathscr{S}_F(R)$ give rise to certain orbits of $G(R)$ on $V(R)$ with characteristic polynomial $F_{\mathrm{mon}}(x,1)$. To produce the desired orbit, it suffices by Theorem~\ref{thm-theconstruction} and Proposition~\ref{prop-slo} to construct a pair $(I, \delta) \in H_F$ such that the matrix $A$ that arises defines a split quadratic form over $k$. Our construction extends a result of Simon, who constructs the desired pair $(I,\delta)$ under the following special conditions: $F$ is primitive and irreducible and $y_0$ is coprime to the index of $R_F$ in its integral closure (see~\cite[Corollary~4]{MR2523319}). We finish in \S\ref{sec-whosdist} by describing when the orbits arising from the construction are distinguished.

\subsection{The Construction} \label{sec-theconstrsec} Let $(x_0,y_0,z_0) \in \mathscr{S}_F(R)$. For convenience, we say that the point $(x_0, y_0, z_0)$ is a \emph{Weierstrass point} if $y_0 = 0$ and a \emph{non-Weierstrass point} otherwise. Note that if $y_0 = 0$, then $z_0 \neq 0$ because $f_0 \neq 0$, so the binary form $F$ factors uniquely as
\begin{equation} \label{eq-defftilde}
F(x,z) = \big(x - \tfrac{x_0}{z_0} \cdot z \big) \cdot \wt{F}(x,z)
\end{equation}
where $\wt{F} \in R[x,z]$ is a binary form of degree $2n$ with leading coefficient equal to $f_0$.

We start by transforming the pair $(x_0, z_0)$ into a pair with simpler coordinates by constructing a suitable $\gamma \in \on{SL}_2(R)$ such that $(x_0, z_0) \cdot \gamma = (0,1)$. Let $b_0, d_0 \in R$ be any elements such that $b_0x_0 + d_0z_0 = 1$ (such $b_0, d_0$ exist since $Rx_0 + Rz_0 = R$). We claim that there exists $t \in R$ such that $(d_0 -t x_0) + \theta(b_0 + t z_0) \in K_F^\times$. Indeed, if we were to have $(d_0 -t x_0) + \theta(b_0 + t z_0) \in K_F \smallsetminus K_F^\times$, then the pair $(x,z) = (d_0 - tx_0,-b_0 - t z_0)$ is a root of the equation $F(x,z) = 0$, which has finitely many roots up to scaling, implying that we can choose $t$ to avoid this outcome (since $k$ is of characteristic $0$). Take any such $t \in R$, let $b = b_0 + t z_0$ and $d = d_0 - t x_0$, \mbox{and let}
$\gamma = \left[\begin{smallmatrix} z_0 & b\\ -x_0 & d\end{smallmatrix}\right]$. Now, let $F'$ be defined by
 $$F'(x,z) = F ((x,z) \cdot \gamma^{-1}) = \sum_{i = 0}^{2n+1} f_i'  x^{2n+1-i}z^i \in R[x,z].$$
Note that $f_{2n+1}' = y_0^2$ and that $f_0' \neq 0$ since $f_0' = f_0 \cdot \on{N}(d + \theta b)$. If $y_0 = 0$, the binary form $F'$ \mbox{factors uniquely as}
$$F'(x,z) = x \cdot \wt{F}'(x,z),$$
where $\wt{F}' \in R[x,z]$ is a binary form of degree $2n$ with leading coefficient equal to $f_0'$; further observe that $f_{2n}' = \wt{F}(x_0,z_0) \neq 0$ because $F$ is separable. Let $\theta' = (-x_0 + \theta z_0)/(d+\theta b)$ be the root of $F'(x,1)$ in $K_{F'} = K_F$, and consider the free rank-$(2n+1)$ $R$-submodule $I' \subset K_{F'}$ defined by
\begin{equation} \label{eq-I'isdef}
I' \defeq R\langle \xi, \theta', \dots, {\theta'}^n, \zeta_{n+1}', \dots, \zeta_{2n}'\rangle,  \quad \text{where} \quad \xi \defeq\begin{cases} y_0 & \text{ if $y_0 \neq 0$,} \\ \zeta_{2n}'+f_{2n}' & \text{ if $y_0 = 0$.} \end{cases}
\end{equation}
Observe that the $R$-module $I'$ is a fractional ideal of $R_{F'}$ that resembles the fractional ideal $I_{F'}^n$, the only difference between them being that the basis element $1 \in I_{F'}^n$ is replaced by $\xi \in I'$.
\begin{lemma} \label{lem-idprime}
We have that
$${I'}^2 \subset \delta' \cdot J_{F'}, \quad \text{where} \quad \delta' \defeq \begin{cases} -\theta' & \text{ if $y_0 \neq 0$,}\\ \wt{F}'(\theta',1)-\theta'  & \text{ if $y_0 = 0$}\end{cases}$$
and that $\on{N}(I')^2 = \on{N}(\delta') \cdot \on{N}(J_{F'})$.
\end{lemma}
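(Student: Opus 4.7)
The plan is to verify the two defining conditions for $(I', \delta')$ to lie in $H_{F'}$: the inclusion $(I')^2 \subset \delta' \cdot I_{F'}^{2n-1}$ and the norm equality $\on{N}(I')^2 = \on{N}(\delta') \cdot \on{N}(I_{F'}^{2n-1})$. First, I would establish the structural decomposition
\[
I' \;=\; \xi R + \theta' \cdot I_{F'}^{n-1},
\]
by expanding $\theta' \cdot I_{F'}^{n-1}$ via the multiplication relation $\theta' \zeta'_j = \zeta'_{j+1} - f'_j \theta'$ for $1 \leq j \leq 2n$ (where $\zeta'_{2n+1} = -f'_{2n+1} = -c^2$): this produces every generator of $I'$ other than the first, together with an extra term $-c^2$ which is absorbed into $\xi R = cR$ when $c \neq 0$ and vanishes when $c = 0$. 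Together with the relation $(I_{F'}^{n-1})^2 = I_{F'}^{2n-2}$ from \S~\ref{sec-ringsbins}, this decomposition gives
\[
(I')^2 \;=\; \xi^2 R + \xi \theta' \cdot I_{F'}^{n-1} + \theta'^2 \cdot I_{F'}^{2n-2},
\]
reducing the inclusion to checking each of the three summands separately.

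For the norm equality, I would compute both sides directly. Since $I'$ differs from $I_{F'}^n$ only by scaling the first basis element $1 \mapsto \xi$, one has $\on{N}(I') = \xi \cdot \on{N}(I_{F'}^n) = \xi/(f'_0)^n$. When $c \neq 0$, a direct computation from $F'(x,1)$ gives $\on{N}(-\theta') = f'_{2n+1}/f'_0 = c^2/f'_0$. When $c = 0$, the factorization $F'(x,1) = x \cdot \wt{F}'(x,1)$ yields a CRT decomposition $K_{F'} \cong k \times K_{\wt{F}'}$ in which $\delta' \leftrightarrow (f'_{2n}, -\wt{\theta}')$, and so $\on{N}(\delta') = f'_{2n} \cdot \on{N}_{K_{\wt{F}'}/k}(-\wt{\theta}') = (f'_{2n})^2/f'_0$. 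In both cases $\on{N}(\delta') = \xi^2/f'_0$, and since $\on{N}(I_{F'}^{2n-1}) = 1/(f'_0)^{2n-1}$, the desired norm equality $\on{N}(\delta') \cdot \on{N}(I_{F'}^{2n-1}) = \xi^2/(f'_0)^{2n} = \on{N}(I')^2$ follows.

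Two of the three summands of $(I')^2$ pose no real difficulty. For $\theta'^2 \cdot I_{F'}^{2n-2}$, I would use the identity $\theta'^2 = -\theta' \delta'$---trivial when $c \neq 0$ since $\delta' = -\theta'$, and a consequence of $\theta' \cdot \wt{F}'(\theta', 1) = 0$ when $c = 0$---together with a direct check that $\theta' \cdot I_{F'}^{2n-2} \subset I_{F'}^{2n-1}$, which follows by applying the same multiplication relations to the basis of $I_{F'}^{2n-2}$. For $\xi \theta' \cdot I_{F'}^{n-1}$, I would use an identity of the form $\xi \theta' = \delta' \eta$ with $\eta \in I_{F'}^{2n-1}$: one takes $\eta = -c$ when $c \neq 0$, and $\eta = \zeta'_{2n}$ when $c = 0$, where the identity $\delta' \zeta'_{2n} = f'_{2n} \theta'$ follows from the $c = 0$ specialization $(\zeta'_{2n})^2 = -f'_{2n} \zeta'_{2n}$ of the multiplication table in~\eqref{eq-multtable}.

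The main obstacle is the remaining summand $\xi^2 R \subset \delta' \cdot I_{F'}^{2n-1}$. When $c \neq 0$, the relation $\theta' \zeta'_{2n} = -c^2 - f'_{2n}\theta'$ rearranges to give $c^2 = \delta' \cdot (\zeta'_{2n} + f'_{2n})$ with $\zeta'_{2n} + f'_{2n} \in I_{F'}^{2n-1}$, and the containment is immediate. When $c = 0$, however, $\delta'$ is a zero divisor in $K_{F'}$ and the argument is subtler: I would work in the CRT decomposition to construct an explicit $\alpha \in I_{F'}^{2n-1}$ satisfying $\delta' \alpha = (f'_{2n})^2$ by matching both components of this identity. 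Using the relation $-f'_{2n}/\wt{\theta}' = f'_0 \wt{\theta}'^{2n-1} + \cdots + f'_{2n-1}$ coming from $\wt{F}'(\wt{\theta}', 1) = 0$, one finds the closed-form solution
\[
\alpha \;=\; f'_{2n} + \sum_{j=1}^{2n-1} f'_{2n} f'_{2n-1-j}\, \theta'^j + (1 - f'_{2n-1})\, \zeta'_{2n},
\]
whose coefficients visibly lie in $R$, completing the verification.
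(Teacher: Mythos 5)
Your proof is correct, and it takes a genuinely different route from the paper's. The paper's proof verifies the inclusion $(I')^2 \subset \delta' \cdot I_{F'}^{2n-1}$ by brute force: it writes out all six families of pairwise products of basis elements of $I'$, divides each by $\delta'$, and checks membership in $I_{F'}^{2n-1}$ term by term (separately for $c \neq 0$ and $c = 0$). You instead exhibit the structural decomposition $I' = \xi R + \theta' \cdot I_{F'}^{n-1}$, expand $(I')^2$ into three summands $\xi^2 R + \xi\theta'\cdot I_{F'}^{n-1} + \theta'^2 \cdot I_{F'}^{2n-2}$, and dispose of each via a single clean identity: $\theta'^2 = -\theta'\delta'$, $\xi\theta' = \delta'\eta$, and $\xi^2 = \delta'\alpha$. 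This reorganization makes the shape of the argument much more transparent --- it isolates $\xi^2 R$ as the one genuinely nontrivial piece and explains why the other two are routine --- at the cost of needing the intermediate lemma that $\theta'\cdot I_{F'}^{n-1}$ supplies the other $2n$ generators of $I'$ (modulo $\xi R$). Your norm computation matches the paper's, though you reach $\on{N}(\delta')=\xi^2/f_0'$ in the $c=0$ case via the CRT splitting $K_{F'}\cong k\times K_{\wt{F}'}$ rather than by direct calculation; both are fine.

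Two small remarks. First, in the middle summand you conclude $\xi\theta'\cdot I_{F'}^{n-1} \subset \delta'\cdot I_{F'}^{2n-1}$ from $\eta\in I_{F'}^{2n-1}$; what is actually needed is $\eta\cdot I_{F'}^{n-1}\subset I_{F'}^{2n-1}$, which does hold because $\eta$ lies in $R_{F'}$ (it is $-c$ or $\zeta_{2n}'$) and $I_{F'}^{n-1}\subset I_{F'}^{2n-1}$, but the stated justification is off by a step. Second, your claim that ``$\delta'$ is a zero divisor in $K_{F'}$'' when $c=0$ is a misstatement: under the CRT decomposition $\delta'\leftrightarrow(f_{2n}',-\wt\theta')$ and both coordinates are nonzero (since $f_{2n}'=\wt F(x_0,z_0)\neq 0$), so $\delta'$ is a unit. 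What is a zero divisor in the $c=0$ case is $\theta'$, which is why the naive choice $\delta'=-\theta'$ from the $c\neq 0$ case must be replaced; but the actual $\delta'$ is invertible and the argument is not obstructed. Neither slip affects the correctness of what you do --- the explicit $\alpha$ you construct does satisfy $\delta'\alpha=(f_{2n}')^2$ and visibly lies in $I_{F'}^{2n-1}$.
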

\begin{proof}
Recall from~\eqref{eq-idealdef} that the fractional ideal $J_{F'}$ has the $\BZ$-basis
$$J_{F'} = R\langle1 , \theta', \dots, \theta'^{2n-1}, \zeta_{2n}' \rangle.$$
To prove the claimed containment, it suffices to check that when each of the pairwise products of the basis elements of $I'$ in~\eqref{eq-I'isdef} is divided by $\delta'$, what results is an element of $J_{F'}$. We have the following computations:
\begin{align*}
 \xi^2/\delta' & =  \zeta_{2n}' + f_{2n}' & & \\
  (\xi \cdot {\theta'}^i)/\delta' & = -y_0 {\theta'}^{i-1} & & \text{for $i \in \{1, \dots, n\}$} \\
 (\xi \cdot \zeta_j')/\delta' & =  -y_0  \zeta_{j-1}' - y_0 f_{j-1}' &  & \text{for }j \in \{n+1, \dots, 2n\} \\
 ({\theta'}^i \cdot {\theta'}^j)/\delta' & = -{\theta'}^{i+j-1} &  & \text{for }i,j \in \{1, \dots, n\} \\
 ({\theta'}^i \cdot \zeta_j')/\delta' & = -{\theta}'^{i-1} \zeta_j' &  & \text{for }i \in \{1, \dots, n\},\,j \in \{n+1, \dots, 2n\} \\
 (\zeta_i'\cdot \zeta_j')/\delta' & = -\zeta_i'\zeta_{j-1}'-f_{j-1}'\zeta_i' &  & \text{for }i,j \in \{n+1, \dots, 2n\}
 \end{align*}
Using the fact that $\zeta_i' \in R_{F'}$ for each $i \in \{0, \dots, 2n+1\}$ together with the fact that $J_{F'}$ is closed under multiplication by elements in $R_{F'}$, one readily verifies that each of the expressions obtained on the right-hand side above is an element of $J_{F'}$. Crucially, these expressions do not depend in piecewise fashion on the value of $y_0$, even though $\xi$ and $\delta'$ do (see Remark~\ref{rmk-thefix}).

As for the second part of the lemma, using $\on{N}(I_{F'}^n) = {f_0'}^{-n}$, we find that $\on{N}(I') = \wt{\xi} \cdot {f_0'}^{-n}$, where $\wt{\xi} = \xi = y_0$ if $y_0 \neq 0$ and $\wt{\xi} = \xi-\zeta_{2n}' = f_{2n}'$ if $y_0 = 0$. Moreover, a calculation reveals that $\on{N}(\delta') = \wt{\xi}^2 \cdot {f_0'}^{-1}$. Combining these norm calculations, we deduce that
\begin{equation*}
\on{N}(I')^2 =  (\wt{\xi} \cdot {f_0'}^{-n})^2 = (\wt{\xi}^2 \cdot {f_0'}^{-1}) \cdot ({f_0'}^{-(2n-1)}) = \on{N}(\delta') \cdot \on{N}(J_{F'}) \qedhere
\end{equation*}
\end{proof}
We now transform $I'$ and $\delta'$ back from $R_{F'}$ to $R_F$. Recall from~\eqref{eq-idealident} that $$I_F^{n} = \phi_{n,\gamma}(I_{F'}^n) = (-b \theta' + z_0)^{-n} \cdot I_{F'}^n,$$
where we can invert $-b\theta' + z_0$ because its inverse is equal to $d + \theta b$. Since $I'$ resembles $I_{F'}^n$, it makes sense to consider the fractional ideal
\begin{equation} \label{eq-defofIinverts}
I \defeq \phi_{n,\gamma}(I') = (-b \theta' + z_0)^{-n} \cdot I' = (d + \theta b)^n \cdot I',
\end{equation}
which has norm given by
\begin{align} \label{eq-normofI}
    \on{N}(I) & = \on{N}((d + \theta b)^n \cdot I') = \on{N}(d + \theta b)^n \cdot \on{N}(I') = \wt{\xi}\cdot f_0^{-n}
\end{align}
where we used the multiplicativity relation~\eqref{eq-normsmult}. Similarly, let $\delta \in K_F^\times$ be defined as follows:
\begin{equation} \label{eq-inspect}
\delta \defeq \phi_{1,\gamma}(\delta') = \frac{\delta'}{-b\theta'+z_0} = \begin{cases} x_0 - \theta z_0 & \text{ if $y_0 \neq 0$,} \\ \wt{F}(z_0\theta, z_0) + (x_0 - \theta z_0) & \text{ if $y_0 = 0$.} \end{cases}
\end{equation}
It then follows from Lemma~\ref{lem-idprime} that
\begin{equation} \label{eq-id}
I^2 \subset \frac{\delta'}{-b\theta'+z_0}\cdot \frac{J_{F'}}{(-b \theta'+z_0)^{2n-1}} = \delta\cdot J_F,
\end{equation}
where in the last step above, we used the fact that
$$J_F = \phi_{2n-1,\gamma}(J_{F'}) = (-b \theta'+z_0)^{-(2n-1)} \cdot J_{F'}.$$
Notice that $\on{N}(I)^2 = \on{N}(\delta) \cdot \on{N}(J_{F})$ (this can be verified directly from~\eqref{eq-normofI} and~\eqref{eq-inspect}, but it also follows from the fact that $\on{N}(I')^2 = \on{N}(\delta') \cdot \on{N}(J_{F'})$ by Lemma~\ref{lem-idprime}).
\begin{lemma} \label{lem-welldefined}
The fractional ideal $I$ is well-defined, in the sense that it does not depend on the choice of $\gamma = \left[\begin{smallmatrix} z_0 & b\\ -x_0 & d\end{smallmatrix}\right]\in \on{SL}_2(R)$ satisfying $(x_0, z_0) \cdot \gamma = (0,1)$ and $d + \theta b \in K_F^\times$.
\end{lemma}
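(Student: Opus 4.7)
My plan is to reduce the lemma to the case where the two choices of $\gamma$ differ by right-multiplication by a unipotent matrix, and then verify the resulting key identity by a direct computation with generators. The first observation is that any two valid $\gamma_1, \gamma_2 \in \on{SL}_2(R)$ satisfying $(x_0, z_0)\gamma_j = (0,1)$ must differ by $\gamma_2 = \gamma_1 u_K$ for some $K \in R$, where $u_K = \left[\begin{smallmatrix} 1 & K \\ 0 & 1\end{smallmatrix}\right]$ is the unipotent stabilizer of $(0,1)$ under right multiplication. Using the multiplicativity $\phi_{n, \gamma_1 u_K} = \phi_{n, \gamma_1} \circ \phi_{n, u_K}$ of the identification maps from~\eqref{eq-idealident}, the desired equality $I_1 = I_2$ reduces to showing $\phi_{n, u_K}(I'_2) = I'_1$ as $R$-submodules of $K_{F'_1}$; by the symmetry $K \leftrightarrow -K$, it suffices to prove the inclusion $\phi_{n, u_K}(I'_2) \subseteq I'_1$.

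Under the identification $\theta'_2 \mapsto \theta'_1/(1 + K\theta'_1)$ induced by $F'_1 = u_K \cdot F'_2$, the map $\phi_{n, u_K}$ acts as multiplication by $(1 + K\theta'_1)^n$. For the generators $\xi$ and $(\theta'_2)^j$ with $j \in \{1, \ldots, n\}$, the images $\xi(1+K\theta'_1)^n$ and $(\theta'_1)^j(1+K\theta'_1)^{n-j}$ expand via the binomial theorem into manifest $R$-linear combinations of $\xi, \theta'_1, \ldots, (\theta'_1)^n$, each lying in $I'_1$. The substantive case is the $\zeta$-generators $\zeta'_{2, m}$ for $m \in \{n+1, \ldots, 2n\}$. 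Here I would exploit the telescoping identity $\zeta'_{2, m} = \theta'_2 \cdot (\zeta'_{2, m-1} + f'_{2, m-1})$, which follows directly from the definition $\zeta'_{2, m} = \sum_{j=0}^{m-1} f'_{2, j}(\theta'_2)^{m-j}$. Since $m-1 \geq n$, the factor $\zeta'_{2, m-1} + f'_{2, m-1}$ lies in $I_{F'_2}^{n-1}$; combining this with the identity $\theta'_2(1+K\theta'_1) = \theta'_1$ and the functoriality $\phi_{n-1, u_K}(I_{F'_2}^{n-1}) = I_{F'_1}^{n-1}$ yields $\phi_{n, u_K}(\zeta'_{2, m}) = \theta'_1 \cdot \phi_{n-1, u_K}(\zeta'_{2, m-1} + f'_{2, m-1}) \in \theta'_1 \cdot I_{F'_1}^{n-1}$.

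The remaining step is to verify $\theta'_1 \cdot I_{F'_1}^{n-1} \subseteq I'_1$ by examining the effect of multiplication by $\theta'_1$ on each generator of $I_{F'_1}^{n-1}$. The decisive identities are $\theta'_1 \zeta'_{1, m} = \zeta'_{1, m+1} - f'_{1, m}\theta'_1$ for $n \leq m < 2n$, together with $\theta'_1 \zeta'_{1, 2n} = -c^2 - f'_{1, 2n}\theta'_1$, the latter obtained by applying $F'_1(\theta'_1, 1) = 0$ to reduce $(\theta'_1)^{2n+1}$. Every resulting constant term is either $0$ or equal to $-c^2$, and is accordingly divisible by $\xi$ in $R$: when $c \neq 0$, $\xi = c$ divides $c^2$; when $c = 0$, the term vanishes and $\xi = f'_{2n}$ is itself independent of $\gamma$ (via Euler's identity applied to $F$ at the zero $(x_0, z_0)$). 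The main obstacle is precisely this divisibility check, which crucially relies on the primitive-solution relation $f'_{2n+1} = F(x_0, z_0) = c^2$; all other verifications are straightforward binomial expansions or short telescoping computations.
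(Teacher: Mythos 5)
Your proof is correct and follows essentially the same route as the paper: both reduce to a right-translate by a unipotent $u_K$, exploit $\theta_2'(1+K\theta_1') = \theta_1'$ and the functoriality $\phi_{n-1,u_K}(I_{F_2'}^{n-1}) = I_{F_1'}^{n-1}$, and ultimately rest on the divisibility of the constant $f'_{2n+1} = c^2$ by $\xi$. The only difference is stylistic — the paper compresses the verification into a one-line computation via the re-expression $I' = R\langle \xi, \theta' I_{F'}^{n-1}\rangle$ of (eq-reexp), whereas you check the inclusion $\phi_{n,u_K}(I_2') \subseteq I_1'$ generator by generator and invoke the $K \leftrightarrow -K$ symmetry for the reverse inclusion, thereby also supplying the detail that the paper leaves implicit in asserting (eq-reexp).
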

\begin{proof}
Let $\gamma = \left[\begin{smallmatrix} z_0 & b\\ -x_0 & d\end{smallmatrix}\right]\in \on{SL}_2(R)$ satisfying $(x_0, z_0) \cdot \gamma = (0,1)$ and $d + \theta b \in K_F^\times$ as above. Any other $\check{\gamma} \in \on{SL}_2(R)$ satisfying $(x_0,z_0) \cdot \check{\gamma}$ to $(0,1)$ is a right-translate of $\gamma$ by an element of the stabilizer of $(0,1)$, which is the group of unipotent matrices $M_t = \left[\begin{smallmatrix} 1 & t \\ 0 & 1 \end{smallmatrix} \right]$ for $t \in R$. So take $t \in R$, and let
$$\check{\gamma} = \gamma \cdot M_t = \left[\begin{array}{cc} z_0  & b + t z_0 \\ -x_0 & d - t x_0 \end{array} \right]$$
Let $\check{F}(x,z) = F((x,z) \cdot \check{\gamma}^{-1}) = \sum_{i = 0}^{2n+1} \check{f}_i x^{2n+1-i}z^i$, and let $\check{\theta}$ be the root of $\check{F}(x,1)$ in $K_F$. Assume that $t$ has been chosen so that $(d-t x_0) + \theta  (b+tz_0) \in K_F^\times$, and notice that
\begin{equation} \label{eq-checktheta}
\check{\theta} = \frac{-x_0 + \theta z_0}{(d-t x_0) + \theta  (b+tz_0)} = \frac{\theta'}{1 + \theta' t}.
\end{equation}
Just as we associated a pair $(I', \delta')$ to $\gamma$, we can associate a pair $(\check{I}, \check{\delta})$ to $\check{\gamma}$ satisfying the analogous properties $\check{I}^2 \subset \check{\delta} \cdot J_{\check{F}}$ and $\on{N}(\check{I})^2 = \on{N}(\check{\delta}) \cdot \on{N}(J_{\check{F}})$. To prove the lemma, it suffices to show that
\begin{equation} \label{eq-neededequality}
\phi_{n,\gamma}(I') = \phi_{n,\check{\gamma}}(\check{I}) \quad \text{and} \quad \phi_{1,\gamma}(\delta') = \phi_{1, \check{\gamma}}(\check{\delta}),
\end{equation}
The second equality in~\eqref{eq-neededequality} is readily deduced by inspecting~\eqref{eq-inspect}; as for the first equality, notice that it is equivalent to the following:
\begin{equation} \label{eq-desireewelldef}
\left(\frac{-b \theta'+z_0}{-(b+tz_0) \check{\theta}+ z_0}\right)^n \cdot \check{I} = I'.
\end{equation}
Using~\eqref{eq-checktheta}, we find that the fraction in parentheses in~\eqref{eq-desireewelldef} is equal to $1+t \theta'$. Now, observe that we can express the ideals $I'$ and $\check{I}$ as
\begin{equation} \label{eq-reexp}
I' = R \langle \wt{\xi}, \theta' \cdot I_{F'}^{n-1} \rangle \quad \text{and} \quad \check{I} = R \langle \wt{\xi}, \check{\theta} \cdot I_{\check{F}}^{n-1}\rangle,
\end{equation}
 where $\wt{\xi} = y_0$ if $y_0 \neq 0$ and $\wt{\xi} = f_{2n}' = \check{f}_{2n}$ if $y_0 = 0$. Upon combining~\eqref{eq-desireewelldef} and~\eqref{eq-reexp}, we find that
\begin{align*}
(1 + t \theta')^n \cdot \wt{I} & = R\langle (1+t \theta')^n \cdot \wt{\xi} , ((1+t\theta') \wt{\theta}) \cdot ((1+t\theta')I_{\wt{F}})^{n-1}\rangle \\
& = R\langle(1+t\theta')^n \cdot \wt{\xi}, \theta' \cdot I_{F'}^{n-1} \rangle = I',
\end{align*}
where the last step above follows because ${\theta'}^i \in I'$ for each $i \in \{0, \dots, n\}$.
\end{proof}
By applying the correspondence in Theorem~\ref{thm-theconstruction} to the pair $(I, \delta)$ constructed above, we see that the point $(x_0, y_0, z_0) \in \mathscr{S}_F(R)$ gives rise to a pair $(A,B) \in R^2 \otimes_R \Sym_2 R^{2n+1}$ with invariant form $(-1)^n \cdot F_{\on{mon}}$. To complete the construction, it remains to verify that $A$ is split:
\begin{lemma} \label{lem-split}
The matrix $A$ defines a split quadratic form over $k$.
\end{lemma}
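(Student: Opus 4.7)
The plan is to exhibit an explicit $n$-dimensional totally isotropic $\BQ$-subspace of the bilinear form defined by $A$; combined with the non-degeneracy of $A$ over $\BQ$, this will force the form to be split. Non-degeneracy comes for free from Theorem~\ref{thm-theconstruction}(1): substituting $z = 0$ into $\det(xA + zB) = \pm F_{\on{mon}}(x,z)$ yields $\det A = \pm F_{\on{mon}}(1, 0) = \pm 1$.

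First, I would identify $I \otimes_R \BQ$ with $K_F$ so that the form defined by $A$ becomes the $\BQ$-bilinear form $\Phi(\alpha,\beta) = \pi_{2n}(\delta^{-1}\alpha\beta)$ on $K_F$, where $\pi_{2n}$ extends $\BQ$-linearly (so that $\pi_{2n}(\theta^k) = 0$ for $0 \leq k \leq 2n-1$ and $\pi_{2n}(\theta^{2n}) = -f_0^{-1}$). The candidate isotropic subspace is
$$L \defeq \BQ\langle (d+\theta b)^n \cdot {\theta'}^i : 1 \leq i \leq n\rangle \subset K_F.$$
These generators lie in $I \otimes_R \BQ$ because ${\theta'}^i \in I'$ for $1 \leq i \leq n$ and $I = (d+\theta b)^n \cdot I'$, and they are $\BQ$-linearly independent because $\theta'$ is a primitive element of $K_F$ over $\BQ$ and $d+\theta b$ is a unit in $K_F$; so $\dim_\BQ L = n$.

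Next, I would verify that $\Phi$ vanishes on $L$. Setting $\eta \defeq x_0 - \theta z_0$, the relation $(d+\theta b)\theta' = -\eta$ gives, for $\alpha = (d+\theta b)^n {\theta'}^i$ and $\beta = (d+\theta b)^n {\theta'}^j$ with $1 \leq i, j \leq n$,
$$\alpha\beta = (-1)^{i+j}(d+\theta b)^{2n-i-j}\, \eta^{i+j}.$$
The crux is the identity $\delta^{-1}\eta^k = \eta^{k-1}$ for $k \geq 2$, from which $\delta^{-1}\alpha\beta = (-1)^{i+j}(d+\theta b)^{2n-i-j}\eta^{i+j-1}$; this last expression is a polynomial in $\theta$ of total degree $(2n-i-j)+(i+j-1) = 2n-1$, so $\pi_{2n}(\delta^{-1}\alpha\beta) = 0$ because $\pi_{2n}$ kills every monomial $\theta^k$ with $k \leq 2n-1$.

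The main obstacle will be establishing the identity $\delta^{-1}\eta^k = \eta^{k-1}$ uniformly across the two cases in the definition of $\delta$. When $c \neq 0$ it is immediate since $\delta = \eta$. When $c = 0$, with $\delta = \wt{F}(z_0\theta, z_0) + \eta$, I would first derive $\delta\eta = \eta^2$ by showing that $\wt{F}(z_0\theta, z_0)\cdot \eta = 0$ in $K_F$; this follows from the factorization $F(x,z) = (x - \tfrac{x_0}{z_0}z)\wt{F}(x,z)$ together with $F(\theta,1) = 0$. Using the separability of $F$ to check that $\delta$ is a unit in $K_F$ (via the CRT decomposition of $K_F$), I would conclude $\delta^{-1}\eta^k = \eta^{k-1}$ for $k \geq 2$, completing the isotropy verification. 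Since $L$ is an $n$-dimensional isotropic subspace of the $(2n+1)$-dimensional non-degenerate quadratic space $(K_F, \Phi)$, it must be maximal isotropic, whence $\Phi$ is split over $\BQ$.
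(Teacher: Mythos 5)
Your proof is correct and takes essentially the same approach as the paper's: exhibit an explicit $n$-dimensional isotropic subspace for the form $\pi_{2n}(\delta^{-1}\alpha\beta)$ on $K_F$ and observe that each $\pi_{2n}(\delta^{-1}\alpha\beta)$ reduces to a polynomial of degree at most $2n-1$ in $\theta$. The paper uses the subspace $k\langle \theta^i(x_0-\theta z_0) : 0 \leq i \leq n-1\rangle$, which coincides with your $L$ after applying $(d+\theta b)\theta' = -(x_0-\theta z_0)$; you simply spell out a few details (the non-degeneracy of $A$, the identity $\delta^{-1}\eta^k = \eta^{k-1}$ in the Weierstrass case) that the paper leaves implicit.
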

\begin{proof}
It suffices to exhibit an $n$-dimensional isotropic space $X$ over $k$ for the quadratic form defined by $A$. We claim that
$$X = k\langle x_0 - \theta z_0, \theta(x_0 - \theta z_0), \dots, \theta^{n-1}(x_0 - \theta z_0)\rangle \subset K_F$$
does the job. Indeed, since $F$ is separable, the elements $\theta^i(x_0 - \theta z_0)$ for $i \in \{0, \dots, n-1\}$ are linearly independent over $k$: if $y_0 \neq 0$, then this follows because the elements $\theta^i \in K_F$ for $i \in \{0, \dots, n-1\}$ are linearly independent and $x_0 - \theta z_0$ is an invertible element of $K_F$; if $y_0 = 0$, then this follows because the elements $\theta^i \in K_{\wt{F}}$ for $i \in \{0, \dots, n-1\}$ are linearly independent and $x_0 - \theta z_0$ is an invertible element of $K_{\wt{F}}$. Thus, we have that $\dim_k X = n$. Moreover, notice that
$$\pi_{2n}(\langle \theta^i(x_0 - \theta z_0) , \theta^j (x_0 - \theta z_0) \rangle) = \pi_{2n}(x_0\theta^{i+j} - z_0\theta^{i+j+1}) = 0$$
for any $i,j \in \{0, \dots, n-1\}$.
\end{proof}

\begin{remark} \label{rmk-thefix}
We defined the pair $(I,\delta)$ in piecewise fashion, according as $y_0 \neq 0$ or $y_0 = 0$. The resulting pair $(A,B)$ nevertheless admits a ``functorial'' definition: indeed, the entries of $A$ and $B$ are polynomial functions in $x_0, y_0, z_0, b, d$ and the coefficients of $F$. Thus, if our purpose was simply to construct orbits from integral points, we could have carried out the construction for $y_0 \neq 0$ and specialized the resulting pair of matrices to the case $y_0 = 0$. However, our proofs of Theorems~\ref{thm-main} and~\ref{thm-hasse} require knowledge of the element $\delta \in K_F^\times$ corresponding to the orbit arising from an integral point, and it is difficult to reverse-engineer $\delta$ from a representative $(A,B)$ of such an orbit.

In~\cite[\S2]{thesource}, Bhargava establishes an analogous construction for rational points on hyperelliptic curves, one that takes a \emph{non}-Weierstrass point on a degree-$2n$ hyperelliptic curve $y^2 = F(x,z)$ and produces a pair $(I, \delta)$ of a fractional ideal $I$ and $\delta \in K_F^\times$ such that $I^2 \subset \delta \cdot I_F^{2n-3}$ and $\on{N}(I)^2 = \on{N}(\delta) \cdot \on{N}(I_F^{2n-3})$. By a parametrization due to Wood (see~\cite[Theorem~5.7]{MR3187931}), the pair $(I,\delta)$ gives rise to a pair $(A,B) \in R^2 \otimes_R \Sym_2 R^{2n}$ such that $\det(x \cdot A + z \cdot B) = (-1)^n \cdot F(x,z)$. To handle Weierstrass points, Bhargava uses the aforementioned specialization trick but does not determine the element $\delta \in K_F^\times$ corresponding to the specialized orbit, even though this data seems necessary for the proofs of~\cite[Theorem~3 and Corollary~4]{thesource} to go through.\footnote{While $100\%$ of hyperelliptic curves have no $\BQ$-rational Weierstrass points, they often have Weierstrass points over completions of $\BQ$, and such local Weierstrass points cannot be disregarded in proving~\cite[Theorem~3 and Corollary~4]{thesource}, which concern locally soluble $2$-covers (and hence local rational points) of hyperelliptic curves.} We note here that the piecewise construction of the pair $(I,\delta)$ given in this section can be modified to work for hyperelliptic curves, thus yielding complete proofs of~\cite[Theorem~3 and Corollary~4]{thesource}.
\end{remark}

\begin{remark} \label{rem-robust}
That the invariant form of the pair $(A,B)$ is equal to $(-1)^n \cdot F_{\on{mon}}$, that $A$ defines a split quadratic form, and that the stabilizer of $(A,B)$ contains a subgroup isomorphic to $R_F^\times[2]$ are properties that hold \emph{formally}. Thus, if we replace $R$ with any base ring and $F \in R[x,z]$ is \emph{any} binary form of degree $2n+1$, \emph{any} $R$-primitive solution $(x,y,z) = (x_0,y_0,z_0) \in R^3$ to the superelliptic equation $y^2 = F(x,z)$ gives rise to a pair of matrices $(A,B) \in R^2 \otimes_R \Sym_2 R^{2n+1}$ such that the invariant form is $F_{\on{mon}}$ and such that the stabilizer in $\on{SL}_{2n+1}(R)$ contains a subgroup isomorphic to $R_F^\times[2]_{\on{N}\equiv1}$. 
If the base ring $R$ is an integral domain, then $A$ defines a split quadratic form over the fraction field $k$ of $R$. Note that these properties hold even if we do not assume that $f_0 \neq 0$, that $F$ is separable, and that $k$ has characteristic $0$, despite the fact that we used these assumptions to derive the construction.
\end{remark}

\subsection{Distinguished Points} \label{sec-whosdist}

We say that a point $(x_0, y_0, z_0) \in \mathscr{S}_F(R)$ is \emph{distinguished} if its associated $G(k)$-orbit is distinguished. Note that Proposition~\ref{prop-rationalcorresp} implies that $(x_0, y_0, z_0)$ is distinguished if and only if $f_0 \cdot (x_0 - \theta z_0)$ is a square in $K_F^\times$.

When $f_0$ is a multiple of a perfect square by a unit $u \in R^\times$, $\mathscr{S}_F$ has a trivial $R$-point, namely $(u,\sqrt{u^{2n+1} \cdot f_0},0)$. Since $f_0\cdot (u - \theta \cdot 0) = u \cdot f_0 \in K_F^{\times 2}$, the orbits associated to these trivial solutions are always distinguished. Whether a primitive integer solution is distinguished is \emph{not necessarily} invariant under replacing $F$ with an $\on{SL}_2(R)$-translate. Indeed, suppose $(x_0, y_0, z_0) \in \mathscr{S}_F(R)$, take $\gamma \in \on{SL}_2(R)$ such that $(x_0, z_0) \cdot \gamma = (1,0)$, and let $F'(x,z) = F((x,z) \cdot \gamma^{-1})$. Then the point $(1,y_0,0) \in \mathscr{S}_{F'}(R)$ is distinguished, regardless of whether the  point $(x_0,y_0,z_0) \in \mathscr{S}_F(R)$ is distinguished. Nevertheless, whether a solution is distinguished is \emph{always} preserved under replacing $F$ with a translate by a unipotent matrix of the form $M_t^T$ for $t \in R$.

Now take $R = \BZ$. When $|f_0|$ is not a perfect square, we show in~\cite[\S3.3]{Swpreprint} that the non-square factors of $|f_0|$ tend to obstruct the distinguished $G(\BQ)$-orbit from arising via the map $\mathsf{orb}_F$ defined in \S\ref{sec-param}. More precisely, we have the following result, which states that there are strong limitations on the forms $F \in \mathscr{F}_{2n+1}(f_0)$ for which some point of $\scr{S}_F(\BZ)$ is distinguished:
\begin{theorem} \label{thm-caseprime}
Let $F \in \mathscr{F}_{2n+1}(f_0)$ be separable, and suppose that there exists a point of $\scr{S}_F(\BZ)$ giving rise to the distinguished $G(\BQ)$-orbit via the construction in \S\ref{sec-buildabear}. Then one of the following two mutually exclusive possibilities holds for each prime $p \mid \upkappa$:
\begin{enumerate}[leftmargin=2em]
\item[$(\mathrm{a})$] $R_F \otimes_\BZ \BZ_p$ is not the maximal order in $K_F \otimes_\BQ \BQ_p$; or
\item[$(\mathrm{b})$] $R_F \otimes_\BZ \BZ_p$ is the maximal order in $K_F \otimes_\BQ \BQ_p$ and $F/z$ is a unit multiple of a square modulo $p$.
\end{enumerate}
Moreover, the density of binary forms $F \in \mathscr{F}_{2n+1}(f_0)$ such that one of the conditions $(\mathrm{a})$ or $(\mathrm{b})$ holds for every prime $p \mid \upkappa$ is equal to $\mu_{f_0} \defeq \prod_{p \mid \upkappa}\left(\frac{1}{p^2} + \frac{p-1}{p^{n+1}}\right)$.
\end{theorem}
\begin{proof}
Let $p \mid \upkappa$. By~\cite[Proposition~40]{Swpreprint}, if $F \in \mathscr{F}_{2n+1}(f_0)$ is such that some element of the image of $\mathsf{orb}_F$ has distinguished $G(\BQ_p)$-orbit, then one of conditions (a) or (b) in the theorem statement holds. By~\cite[Proposition~3.5]{MR2367325}, the $p$-adic density of $F \in \mathscr{F}_{2n+1}(f_0,\BZ_p)$ satisfying condition (a) is $p^{-2}$, and it follows from~\cite[Proposition~41]{Swpreprint} that the $p$-adic density of $F\in \mathscr{F}_{2n+1}(f_0,\BZ_p)$ satisfying condition (b) is $p^{-n-1}(p-1)$. The theorem follows by combining these density calculations.
\end{proof}

Theorem~\ref{thm-caseprime} implies that points of $\scr{S}_F(\BZ)$ often fail to be distinguished --- a fact that is crucial for the proofs of Theorems~\ref{thm-main} and~\ref{thm-hasse}, because the geometry-of-numbers results from~\cite[\S9]{MR3156850} that we apply in \S\ref{sec-finally} to count orbits of $G(\BZ)$ on $V(\BZ)$ only give us a count of the non-distinguished orbits.

\section{Proof of Theorem~\ref{thm-main}} \label{sec-finally}

The purpose of this section is to give a proof of our first main result, namely Theorem~\ref{thm-main}.

\subsection{Outline of the Proof}

Let $\delta$ be the upper density of forms $F \in \mathscr{F}_{2n+1}(f_0)$, enumerated by height, such that $\mathscr{S}_F(\BZ) \neq \varnothing$. Proving Theorem~\ref{thm-main} amounts to obtaining an upper bound on $\delta$. 

To this end, let $\mathscr{F}_{2n+1}^*(f_0)$ be the subset of all $F \in \mathscr{F}_{2n+1}(f_0)$ satisfying both conditions in Theorem~\ref{thm-caseprime} for every prime $p \mid \upkappa$; note that the complement of $\mathscr{F}_{2n+1}^*(f_0)$ consists entirely of binary forms for which the distinguished orbit does not arise from a primitive integer solution. Moreover, let $\mathscr{P}$ denote the ($G(\BZ)$-invariant) subset of integral elements of $V(\BZ)$ that arise via the construction in \S\ref{sec-theconstrsec} from separable forms $F \in \mathscr{F}_{2n+1}(f_0)$. By partitioning $\mc{F}_{2n+1}(f_0)$ into $\mc{F}_{2n+1}^*(f_0)$ and its complement, we see that $\delta$ can be bounded as follows:
\begin{align} \label{eq-thekeybound0}
\delta & \leq \lim_{X \to \infty} \frac{\#\{F \in \mathscr{F}_{2n+1}^*(f_0) : H(F) < X\} + \#\big(G(\BQ)\backslash \{\text{$T \in \mathscr{P}$ : $T$ irreducible, $H(T) < X$}\}\big)}{\#\{F \in \mathscr{F}_{2n+1}(f_0) : H(F) < X\}},
\end{align}
where we say that $T$ is \emph{irreducible} if $\on{ch}(T)$ is separable and the $G(\BQ)$-orbit of $T$ is non-distinguished. 

The first term on the right-hand side of~\eqref{eq-thekeybound0} constitutes the trivial bound for the density of forms $F \in \mc{F}_{2n+1}^*(f_0)$ such that $\mathscr{S}_F(\BZ) \neq \varnothing$. Computing this first term is straightforward. Indeed, the first term in the numerator and the denominator are respectively given by
\begin{align}
\#\{F \in \mathscr{F}_{2n+1}^*(f_0) : H(F) < X\} & = \mu_{f_0} \cdot f_0^{-2n^2 - n} \cdot 2^{2n+1} \cdot X^{\frac{n+1}{2n}} + o(X^{\frac{n+1}{2n}}), \quad\text{and} \label{eq-numer1est}\\
\#\{F \in \mathscr{F}_{2n+1}(f_0) : H(F) < X\} & = f_0^{-2n^2 - n} \cdot 2^{2n+1} \cdot X^{\frac{n+1}{2n}} + o(X^{\frac{n+1}{2n}}). \label{eq-denomest}
\end{align}
The second term in the numerator on the right-hand side of~\eqref{eq-thekeybound0} means ``the number of $G(\BQ)$-equivalence classes of irreducible elements $T \in \mathscr{P}$ of height less than $X$,'' and this constitutes a bound on the number forms $F \in \mc{F}_{2n+1}(f_0) \smallsetminus \mc{F}_{2n+1}^*(f_0)$ such that $\mathscr{S}_F(\BZ) \neq \varnothing$. It is hard to describe $\mathscr{P}$ explicitly, and \mbox{hence to obtain} a precise asymptotic for this second term. But $\mathscr{P}$ is contained within a subset of $V(\BZ)$ defined by local conditions: indeed, we have $$\mathscr{P} \subset V(\BZ) \cap \bigcap_v \mathscr{P}_v,$$
where for each place $v$ of $\BQ$, we write $\mathscr{P}_v$ for the subset of $V(\BZ_v)$ that arises via the construction in \S\ref{sec-theconstrsec} from separable forms $F \in \mathscr{F}_{2n+1}(f_0, \BZ_v)$. Thus, the second term in the numerator on the right-hand side of~\eqref{eq-thekeybound0} is bounded by the following quantity:
\begin{align} \label{eq-thekeybound}
\#\bigg(G(\BQ)\bigg\backslash \bigg\{\text{$T \in V(\BZ) \cap \bigcap_v \mathscr{P}_v$ : $T$ is irreducible, $H(T) < X$}\bigg\}\bigg)
\end{align}
The rest of this section is devoted to bounding~\eqref{eq-thekeybound}. We desire a bound of order $o(f_0^{-2n^2-n} \cdot 2^{-n})$ when $2 \nmid f_0$ and a bound of order $O(f_0^{-2n^2-n} \cdot 2^{-\varepsilon_1 n^{\varepsilon_2}})$ when $2 \mid f_0$. We obtain bounds of these magnitudes by means of the following steps:
\begin{enumerate}
    \item[(A)] In \S\ref{sec-quotebharg}, we adapt results of Bhargava--Gross~\cite{MR3156850} to obtain an asymptotic upper bound for the number of irreducible orbits of $G(\BZ)$ on $V(\BZ)$ of bounded height, where the orbits are counted with respect to a weight function $\phi$ defined by infinitely many local conditions (see Theorem~\ref{thm-cong2}). By choosing the weight function $\phi$ appropriately --- so that it picks out orbits belonging to $\mathscr{P}_v$ for every place $v$ and counts each $G(\BQ)$-equivalence class of integral orbits just once --- we obtain an upper bound for the quantity~\eqref{eq-thekeybound}. This bound is expressed a product of local mass formulas, one for each place of $\BQ$.
    \item[(B)] Proving Theorem~\ref{thm-main} then amounts to bounding each of the local mass formulas obtained in Step (A) (in other words, we must control the ``size'' of $\mathscr{P}_v$ for each $v$). 
    \begin{itemize}
    \item In \S\ref{sec-overr}, we bound the mass at $\infty$ by working over $\BR$ (see Proposition~\ref{prop-thisisforr}). The mass at $\infty$ contributes a factor of $O(2^n \cdot X^{\frac{n+1}{2n}})$ to the bound. 
    \item In \S\ref{sec-overp}, we bound the mass at $p$ when $2 \neq p \nmid f_0$ by working over $\BZ/p\BZ$ (see Proposition~\ref{prop-denstoobig}). As we later show in \S\ref{sec-hillary}, the masses at these primes together contribute a factor of $O(2^{-\varepsilon_1 n^{\varepsilon_2}})$ to the bound. This constitutes a small part of the saving when $2 \nmid f_0$ and the entire saving when $2 \mid f_0$.
    \item In \S\ref{sec-orbates}, we bound the mass at $p$ when $2 = p \nmid f_0$ by working over $\BZ/8\BZ$ (see Proposition~\ref{prop-2denstoobig}). In this case, the mass at $2$ contributes a factor of $O(2^{-2n})$ to the bound, which constitutes most of the saving.
    \item In \S\ref{sec-overzpqp}, we bound the mass at $p$ when $p \mid f_0$ by working over $\BQ_p$ and $\BZ_p$ (see Propositions~\ref{prop-qpbound} and~\ref{prop-zpbound}). In this case, the mass at $p$ contributes to the bound a factor of $O(|f_0|_p^{2n^2+n})$ when $p \neq 2$ and $O(2^{-n} \cdot |f_0|_2^{2n^2+n})$ when $p = 2$.
    \end{itemize}
    \item[(C)] The final step is to combine the bounds obtained in Steps (A) and (B) and to simplify the result. We do this in \S\ref{sec-hillary}.
\end{enumerate}

\subsection{Step (A): Counting Irreducible Integral Orbits of Bounded Height} \label{sec-quotebharg}

For a monic polynomial $f = x^{2n+1} + \sum_{i = 1}^{2n+1} c_i \cdot x^{2n+1-i} \in \BZ[x]$, let the \emph{height} of $f$ be defined by $H(f) \defeq H(c_1, \dots, c_{2n+1}) \defeq \max\{|c_i|^{2n(2n+1)/i} : i = 1, \dots, 2n+1\}$. For an element $T \in V(\BZ)$, let the \emph{height} of (the $G(\BZ)$-orbit of) $T$ be defined by $H(T) = H(\on{ch}(T))$. 

Fix $m \in \{0, \dots, n\}$, and define $\mc{I}(m) \subset \scr{F}_{2n+1}(1,\BR)$
to be the set of separable monic degree-$(2n+1)$ polynomials over $\BR$ having exactly $2m+1$ real roots. Let $\mc{B} \subset \on{ch}^{-1}(\mathscr{I}(m)) \subset V(\BR)$ be a measurable subset closed under the action of $G(\BR)$. Let $N(V(\BZ) \cap \mc{B}; X)$ denote the number of $G(\BZ)$-orbits of irreducible elements $T \in V(\BZ) \cap \mc{B}$ satisfying $H(T) < X$. Then we have the following adaptation of~\cite[Theorem~10.1~and~(10.27)]{MR3156850}, giving an asymptotic for $N(V(\BZ) \cap \mc{B}; X)$:
\begin{theorem} \label{thm-thecount}
There exists an absolute constant $\mc{J} \in \BQ^\times$ such that
$$N(V(\BZ) \cap \mc{B}; X) = \frac{1}{2^{m+n}} \cdot |\mc{J}| \cdot \on{Vol}(G(\BZ) \backslash G(\BR)) \cdot  \int_{\substack{f \in \mc{I}(m) \\ H(f) < X}} \#\left(\frac{\mc{B} \cap \on{ch}^{-1}(f)}{G(\BR)}\right) df + o(X^{\frac{n+1}{2n}}),$$
where the fundamental volume $\on{Vol}(G(\BZ) \backslash G(\BR))$ is computed with respect to a differential
$\omega$ that generates the rank-$1$ module of top-degree differentials of $G$ over $\BZ$.
\end{theorem}
\begin{proof}
In~\cite{MR3156850}, Bhargava and Gross work with the subrepresentation $V' \subset V$ consisting of the traceless operators in $V$. Such traceless operators have characteristic polynomial with the coefficient $c_1$ of the term $x^{2n}$ equal to $0$. They prove that $N(V'(\BZ) \cap \mc{B}; X)$ is given by a formula (see~\cite[Theorem~10.1~and~(10.27)]{MR3156850}) that is almost identical to the formula in the statement of Theorem~\ref{thm-thecount}, except that the value of $\mc{J}$ may be different and the integral runs over polynomials with coefficients $(0, c_2, \dots, c_{2n+1})$ in $\mc{I}(m)$. By simply lifting the restriction that $c_1 = 0$, it is not hard to check that the proof of~\cite[Theorem~10.1~and~(10.27)]{MR3156850} carries over with minimal modifications to prove Theorem~\ref{thm-thecount}.
\end{proof}

The following result generalizes Theorem~\ref{thm-thecount} by giving an upper bound on the number of integral orbits of height up to $X$, where the orbits are weighted by a function defined by local conditions:
\begin{theorem}[\protect{cf.~\cite[Theorem~10.12]{MR3156850}}] \label{thm-cong2}
Let $\phi \colon V(\BZ) \to [0,1] \subset \BR$ be a function such that there exists a function $\phi_p \colon V(\BZ_p) \to [0,1]$ for each prime $p$ satisfying the following conditions:
\begin{itemize}[leftmargin=2em]
    \item For all $T \in V(\BZ)$, the product $\prod_{p} \phi_p(T)$ converges to $\phi(T)$; and
    \item For each $p$, the function $\phi_p$ is locally constant outside a closed set $S_p \subset V(\BZ_p)$ of measure $0$.
\end{itemize}
Let $N_\phi(V(\BZ) \cap \mc{B}; X)$ denote the number of $G(\BZ)$-orbits of irreducible elements $T \in V(\BZ) \cap \mc{B}$ satisfying $H(T) < X$, where each orbit $G(\BZ) \cdot T$ is counted with weight \mbox{$\phi(T) \defeq \prod_p \phi_p(T)$. Then}
$$N_\phi(V(\BZ) \cap \mc{B}; X) \leq N(V(\BZ) \cap \mc{B}; X) \cdot \prod_{p} \int_{T \in V(\BZ_p)} \phi_p(T)dT + o(X^{\frac{n+1}{2n}}).$$
\end{theorem}
\begin{proof}
The proof is identical to the first half of the proof of~\cite[Theorem~2.21]{MR3272925} (note: the ``accep-table'' hypothesis in~\cite[Theorem~2.21]{MR3272925} is not needed for the upper bound in Theorem~\ref{thm-cong2}).
\end{proof}

To apply Theorem~\ref{thm-cong2} to bound~\eqref{eq-thekeybound}, we need to choose the local weight functions $\phi_p$ appropriately. Given $T \in V(\BZ)$, denote by $m(T)$ the 
quantity
$$m(T) \defeq \sum_{T' \in \mc{O}(T)} \frac{\#\on{Stab}_\BQ(T')}{\#\on{Stab}_\BZ(T')} = \sum_{T' \in \mc{O}(T)} \frac{\#\on{Stab}_\BQ(T)}{\#\on{Stab}_\BZ(T')},$$
where $\mc{O}(T)$ denotes a set of representatives for the action of $G(\BZ)$ on the $G(\BQ)$-orbit of $T$, and where $\on{Stab}_\BQ(T')$ and $\on{Stab}_\BZ(T')$ respectively denote the stabilizers of $T'$ in $G(\BQ)$ and $G(\BZ)$. By~\cite[Proposition~10.8]{MR3156850}, counting $G(\BQ)$-orbits on $V(\BZ)$ is, up to a negligible error, no different from counting $G(\BZ)$-orbits on $V(\BZ)$, where the $G(\BZ)$-orbit of $T \in V(\BZ)$ is weighted by $1/m(T)$.

The weight $m(T)$ can be expressed as a product over primes $p$ of local weights $m_p(T)$. Indeed, as stated in~\cite[(11.3)]{MR3156850}, we have that
\begin{equation} \label{eq-weightproduct}
m(T) = \prod_{p} m_p(T), \quad \text{where} \quad m_p(T) \defeq \sum_{T' \in \mc{O}_p(T)} \frac{\#\on{Stab}_{\BQ_p}(T)}{\#\on{Stab}_{\BZ_p}(T')},
\end{equation}
where $\mc{O}_p(T)$ denotes a set of representatives for the action of $G(\BZ_p)$ on the $G(\BQ_p)$-orbit of $T$, and where $\on{Stab}_{\BQ_p}(T')$ and $\on{Stab}_{\BZ_p}(T')$ respectively denote the stabilizers of $T'$ in $G(\BQ_p)$ and $G(\BZ_p)$.

For each prime $p$, let $\psi_p \colon V(\BZ_p) \to \{0,1\} \subset \BR$ be the indicator function of elements $T \in \scr{P}_p$ such that $\on{ch}(T)$ is separable. Then upon taking $\phi_p = \psi_p/m_p$ for each $p$, Theorems~\ref{thm-thecount} and~\ref{thm-cong2} together imply the following:
\begin{proposition}
We have that~\eqref{eq-thekeybound} is bounded above by
\begin{align} \label{eq-deltamiddle}
    &  \sum_{m = 0}^n \frac{1}{2^{m+n}} \cdot |\mc{J}| \cdot \on{Vol}(G(\BZ) \backslash G(\BR))  \cdot\int_{\substack{f \in \mc{I}(m) \\ H(f) < X}} \#\left(\frac{\mc{P}_\infty \cap \on{ch}^{-1}(f)}{G(\BR)}\right) df \cdot \\
    & \qquad\qquad\qquad\qquad\qquad\qquad\qquad\qquad\qquad\qquad\qquad\qquad \prod_{p} \int_{T \in V(\BZ_p)} \frac{\psi_p(T)}{m_p(T)}dT + o(X^{\frac{n+1}{2n}}).  \nonumber
\end{align}
\end{proposition}
In what follows, we call the sum over $m$ on the first line of~\eqref{eq-deltamiddle} the \emph{mass at $\infty$}, and we call the integral over $V(\BZ_p)$ on the second line of~\eqref{eq-deltamiddle} the \emph{mass at $p$}.

\subsection{Step (B): Bounding the Mass at $\infty$} \label{sec-overr}

In this section, we prove the following result, which shows that the mass at $\infty$ in~\eqref{eq-deltamiddle} is bounded by $O(2^n \cdot X^{\frac{n+1}{2n}})$:
\begin{proposition} \label{prop-thisisforr}
    The mass at $\infty$ in~\eqref{eq-deltamiddle} is
    $$\leq 3 \cdot 2^{n} \cdot \prod_{p > 2} \frac{p^{2n^2+n}}{\#G(\BZ/p\BZ)} \cdot \int_{H(f) < X} df.$$
\end{proposition}
\begin{proof}
We start with the following lemma, where we derive an upper bound on the factor $|\mc{J}| \cdot \on{Vol}(G(\BZ) \backslash G(\BR))$ that occurs in the mass at $\infty$:
\begin{lemma} \label{lem-compJ}
We have that $$|\mc{J}|\cdot \on{Vol}(G(\BZ) \backslash G(\BR))   \leq 2^{2n+1} \cdot \prod_{p > 2} \frac{p^{2n^2+n}}{\#G(\BZ/p\BZ)}.$$
\end{lemma}
\begin{proof}
 Let $R = \BC$, $\BR$, or $\BZ_p$ where $p$ is a prime. Let $\mc{R} \subset R^{2n+1}$ be any open subset, and let $s \colon \mc{R} \to V(R)$ be a continuous function such that the characteristic polynomial of $s(c_1, \dots, c_{2n+1})$ is given by $x^{2n+1} + \sum_{i = 1}^{2n+1} c_ix^{2n+1-i}$ for every $(c_1, \dots, c_{2n+1}) \in \mc{R}$. The constant $\mc{J}$ arises as a multiplicative factor in the following change-of-measure formula:
 \begin{lemma} \label{prop-defJ}
For any measurable function $\phi$ on $V(R)$, we have
 $$\int_{T \in G(R) \cdot s(\mc{R})} \phi(T) dT = |\mc{J}| \cdot \int_\mc{R} \int_{G(R)} \phi(g \cdot s(c_1, \dots, c_{2n+1})) \omega(g) dr,$$
 where we regard $G(R) \cdot s(\mc{R})$ as a multiset, $|-|$ denotes the standard absolute value on $R$, $dT$ is the Euclidean measure on $V(R)$, and $dr$ is the restriction to $\mc{R}$ of the Euclidean measure on $R^{2n+1}$.
 \end{lemma}
 \begin{proof}[Proof of Lemma~\ref{prop-defJ}]
 The proof is identical to that of~\cite[Proposition~3.11]{MR3272925}.
 \end{proof}
 In particular, the constant $\mc{J}$ is independent of the choices of $R = \BC$, $\BR$, or $\BZ_p$ and of the region $\mc{R}$ and the functions $s$ and $\phi$. Thus, to compute $|\mc{J}|$, we can make the following convenient choices: take $R = \BZ_p$, so that $|-| = |-|_p$; take
 $$\mc{R} = \left\{(c_1, \dots, c_{2n+1}) \in \BZ_p^{2n+1} : x^{2n+1} + \sum_{i = 1}^{2n+1} c_i x^{2n+1-i} \equiv f \pmod p\right\}$$
for a fixed monic irreducible degree-$(2n+1)$ polynomial $f \in (\BZ/p\BZ)[x]$; letting $\Sigma_f \defeq \{T \in V(\BZ_p) : \on{ch}(T) \equiv f \pmod p\}$, take $s$ to be any continuous right-inverse to the function that sends $T \in \Sigma_f$ to the list of coefficients of $\on{ch}(T)$; and take $\phi$ to be the function that sends $T \in \Sigma_f$ to $\frac{1}{\#\on{Stab}(T)}$, where $\on{Stab}(T) \subset G(\BZ_p)$ is the stabilizer of $T$, and sends $T \in V(\BZ_p) \smallsetminus \Sigma_f$ to $0$. The existence of the right-inverse $s$ is well-known; see, e.g.,~\cite[\S3.3]{cuspy}.

Because $\phi$ is $G(\BZ_p)$-invariant, Lemma~\ref{prop-defJ} yields that for the above convenient choices, we have, on the one hand, that the $p$-adic density of $\Sigma_f$ is:
 \begin{align}
     \on{Vol}(\Sigma_f) \defeq \int_{T \in \Sigma_f} dT & = \int_{T \in G(\BZ_p) \cdot s(\mc{R})} \frac{1}{\#\on{Stab}(T)} dT \nonumber \\
     & = |\mc{J}|_p \cdot \on{Vol}(G(\BZ_p)) \cdot \int_{f'\equiv f\,(\on{mod} p)}\sum_{\substack{T \in G(\BZ_p)\backslash\Sigma_f \\ \on{ch}(T) = f'}} \frac{1}{\#\on{Stab}(T)} dr. \label{eq-Jintexp}
     \end{align}
      First suppose $p \neq 2$. We now compute the sum in the integrand in~\eqref{eq-Jintexp}. Because we chose $f$ to be irreducible over $\BZ/p\BZ$, the ring $R_{f'} \defeq \BZ_p[x]/(f')$ is the maximal order in its field of fractions $K_{f'}$, and it is in fact the unique local ring of rank $2n+1$ over $\BZ_p$ having residue field $\mathbb{F}_{p^{2n+1}}$ (hence $R_{f'}$ does not depend on the choice of $f'$). Then by~\cite[(63)]{Swpreprint}, there is precisely one $G(\BZ_p)$-orbit with characteristic polynomial $f'$, and the size of the stabilizer of this orbit is equal to $\#R_{f'}^\times[2]_{\on{N}\equiv1} = 1$. Thus, we find that
     \begin{equation}
      \sum_{\substack{T \in G(\BZ_p)\backslash\Sigma_f \\ \on{ch}(T) = f'}} \frac{1}{\#\on{Stab}(T)} =  1. \label{eq-secondinvol}
      \end{equation}
Combining~\eqref{eq-Jintexp} and~\eqref{eq-secondinvol} yields that
\begin{equation}
      \on{Vol}(\Sigma_f) = |\mc{J}|_p \cdot \on{Vol}(G(\BZ_p)) \cdot \int_{f'\equiv f\,(\on{mod} p)} dr = |\mc{J}|_p \cdot \frac{\on{Vol}(G(\BZ_p))}{p^{2n+1}}. \label{eq-Jexp1}
     \end{equation}
     Let $\ol{\Sigma}_f \defeq \{T \in  V(\BZ/p\BZ) : \on{ch}(T) = f\}$. Then the mod-$p$ reduction map $\Sigma_f \to \ol{\Sigma}_f$ is surjective, and we have by~\cite[\S6.1]{MR3156850} that $\#\ol{\Sigma}_f = \#G(\BZ/p\BZ)$.
     Thus, we have on the other hand that
\begin{equation}
\on{Vol}(\Sigma_f) = \frac{\#\ol{\Sigma}_f}{p^{\dim V}} = \frac{\#G(\BZ/p\BZ)}{p^{2n^2+3n+1}}.      \label{eq-Jexp2}
 \end{equation}
 Equating~\eqref{eq-Jexp1} and~\eqref{eq-Jexp2} yields that
 \begin{equation} \label{eq-podd}
 |\mc{J}|_p =  \frac{\#G(\BZ/p\BZ)}{p^{2n^2+n} \cdot \on{Vol}(G(\BZ_p))} = 1
 \end{equation}
 when $p \neq 2$, where the last equality follows from the fact that the group $G$ is smooth over $\BZ_p$ when $p \neq 2$.

 Next, suppose $p = 2$. We now derive an upper bound on the sum in the integrand in~\eqref{eq-Jintexp}. By~\cite[Proposition~59]{swathesis}, the number of $G(\BZ_2)$-orbits with characteristic polynomial $f'$ is equal to $2^{n-1}(2^n+1)$.
  The size of the stabilizer of any such orbit is equal to $\#R_{f'}^\times[2]_{\on{N}\equiv 1} = 1$, so we find that
 \begin{equation} \label{eq-secondinvol2}
 \sum_{\substack{T \in G(\BZ_p)\backslash\Sigma_f \\ \on{ch}(T) = f'}} \frac{1}{\#\on{Stab}(T)} = 2^{n-1}(2^n+1).
 \end{equation}
 Combining~\eqref{eq-Jintexp} and~\eqref{eq-secondinvol2} yields that
 \begin{align} \label{eq-comparizon1}
    \on{Vol}(\Sigma_f) & = |\mc{J}|_2 \cdot \on{Vol}(G(\BZ_2)) \cdot\frac{2^{n-1}(2^n+1)}{2^{2n+1}}.
 \end{align}
 Let $\ol{\Sigma}_f = \{T \in V(\BZ/2\BZ) : \on{ch}(T) = f\}$. Then the mod-$2$ reduction map $\Sigma_f \to \ol{\Sigma}_f$ is surjective,  and by Propositions~\ref{prop-slo} and~\ref{prop-rationalcorresp}, the action of $G(\BZ/2\BZ)$ on $\ol{\Sigma}_f$ is simply transitive, so $\#\ol{\Sigma}_f = \#G(\BZ/2\BZ)$. Thus, we have on the other hand that
 \begin{equation} \label{eq-comparizon2}
\on{Vol}(\Sigma_f) =  \frac{\#\ol{\Sigma}_f}{2^{\dim V}} = \frac{\#G(\BZ/2\BZ)}{2^{2n^2+3n+1}}.
 \end{equation}
       Combining~\eqref{eq-comparizon1} and~\eqref{eq-comparizon2} yields that
\begin{equation} \label{eq-neededlabel2}
|\mc{J}|_2^{-1} = 2^{2n} \cdot \on{Vol}(G(\mathbb{Z}_2)) \cdot \frac{2^{2n^2-1}(2^n+1)}{\#G(\BZ/2\BZ)}
\end{equation}
Since $G$ is not smooth over $\BZ_2$, computing $\on{Vol}(G(\BZ_2))$ is far more complicated, but we do not need to know the value of $\on{Vol}(G(\BZ_2))$ for our purpose. The value of $\#G(\BZ/2\BZ)$ is given in~\cite[\S6]{MR3347991} to be $\#G(\BZ/2\BZ) = 2^{n^2} \cdot \prod_{i = 1}^n (2^{2i}-1)$, so it follows that
\begin{align} \label{eq-jishardat2}
\frac{2^{2n^2-1}(2^n+1)}{\#G(\BZ/2\BZ)} & = (2^{-1} + 2^{-n-1}) \cdot \prod_{i = 1}^n (1 - 2^{-2i})^{-1} \leq 1.
\end{align}
Now, applying the identity
\begin{equation} \label{eq-normmultident}
|\mc{J}| \cdot \prod_{p} |\mc{J}|_p = 1
\end{equation}
along with the Tamagawa number identity (see~\cite{MR0213362})
\begin{equation} \label{eq-tam}
\on{Vol}(G(\BZ) \backslash G(\BR)) \cdot \prod_{p} \on{Vol}(G(\BZ_p)) = 2
\end{equation}
and combining them with~\eqref{eq-podd},~\eqref{eq-neededlabel2}, and~\eqref{eq-jishardat2} completes the proof of Lemma~\ref{lem-compJ}.
\end{proof}

Next, in the following lemma, we compute the integrand in the integral over $f$:
\begin{lemma} \label{prop-realbound}
 Let $F \in \mathscr{F}_{2n+1}(f_0,\BR)$, and suppose that $F$ has exactly $2m+1$ real roots. Then there are exactly $2m+1$ orbits of $G(\BR)$ on $V(\BR)$ that arise from points in $\mathscr{S}_F(\BR)$. In particular, the integrand in the integral in the mass at $\infty$ in~\eqref{eq-deltamiddle} is a constant with value $2m+1$.
\end{lemma}
\begin{proof}
 Let $f(x) = F_{\on{mon}}(x,1)$, and let the real roots of $f$ be given in increasing order by $\lambda_1 < \cdots < \lambda_{2m+1}$. We now compute the number of orbits of $G(\BR)$ on $V(\BR)$ having characteristic polynomial $f$ that arise from points in $\scr{S}_F(\BR)$ via the construction in \S\ref{sec-buildabear}. 
The $G(\BR)$-orbit associated to a point $(x_0, y_0, z_0) \in \mathscr{S}_F(\BR)$ is identified via the correspondence in Proposition~\ref{prop-rationalcorresp} with some $f_0 \cdot \delta \in (K_F^\times/K_F^{\times 2})_{\on{N} \equiv 1}$. If $y_0 \neq 0$, this class is represented by a sequence of the form
\begin{equation} \label{eq-sseq0}
(f_0(x_0 - \lambda_1 z_0), \dots, f_0(x_0 - \lambda_{2m+1} z_0), b_1, \dots, b_{n-m}) \in \BR^{2m+1} \times \BC^{n-m} \simeq K_F.
\end{equation}
Otherwise, if $y_0 = 0$, then $\frac{x_0}{z_0} = \lambda_j$ for some $j$; letting $\wt{F}$ be as in~\eqref{eq-defftilde}, we see that the class $f_0 \cdot \delta \in (K_F^\times/K_F^{\times 2})_{\on{N} \equiv 1}$ is represented by a sequence of the form
\begin{equation} \label{eq-sseq0.0}
(f_0(x_0 - \lambda_1 z_0), \dots,f_0 \cdot \wt{F}(x_0,z_0) ,\dots, f_0(x_0 - \lambda_{2m+1} z_0), b_1, \dots, b_{n-m}) \in \BR^{2m+1} \times \BC^{n-m},
\end{equation}
where the term $f_0 \cdot \wt{F}(x_0, z_0)$ in~\eqref{eq-sseq0.0} replaces the term $f_0(x_0 - \lambda_j z_0)$ in~\eqref{eq-sseq0}. The class in $(K_F^{\times}/K_F^{\times 2})_{\on{N} \equiv 1}$ of the sequence in~\eqref{eq-sseq0} or in~\eqref{eq-sseq0.0} is given by the sequence of signs of its first $2m+1$ terms. Because $\on{N}(b_j) > 0$ for every $j$, the condition that $\on{N}(f_0 \cdot \delta)$ is a square in $\BR^\times$ is equivalent to the condition that $\prod_{i = 1}^{2m+1} f_0(x_0 - \lambda_i z_0) > 0$ if $y_0 \neq 0$ and $(f_0 \cdot \wt{F}(x_0,z_0)) \cdot \prod_{\substack{i = 1 \\ i \neq j}}^{2m+1} f_0(x_0 - \lambda_i z_0) > 0$ if $y_0 = 0$. We now split into cases based on the signs of $f_0$ and $z_0$:

\vspace*{0.2cm}
\noindent \emph{Case 1a}: $f_0 > 0$, $z_0 > 0$. First suppose $y_0 \neq 0$. In this case, the sign of $f_0(x_0 - \lambda_i z_0)$ is equal to the sign of $(x_0/z_0) - \lambda_i$. The possible sequences of signs of the $f_0(x_0 - \lambda_i z_0)$ are therefore
\begin{equation} \label{eq-sseq1}
(+,-,-,-,\dots,-)\,\text{ or }\,(+,+,+,-,\dots,-)\,\text{ or }\, \dots\,\text{ or }\, (+,+,+,+,\dots,+),
\end{equation}
giving a total of $m+1$ orbits. We label the sign sequences in~\eqref{eq-sseq1} from left to right by an index $\tau$ that runs from $1$ up to $m+1$. If $y_0 = 0$ and $\frac{x_0}{z_0} = \lambda_j$, then because $\lambda_i < \lambda_j$ when $i < j$ and $\lambda_i > \lambda_j$ when $i > j$, we obtain the same sign sequences as those listed in~\eqref{eq-sseq1}.

\vspace*{0.2cm}
\noindent \emph{Case 1b}: $f_0 > 0$, $z_0 < 0$. First suppose $y_0 \neq 0$. In this case, the sign of $f_0(x_0 - \lambda_i z_0)$ is equal to the opposite of the sign of $(x_0/z_0) - \lambda_i$. The possible sequences of signs of the \mbox{$f_0(x_0 - \lambda_i z_0)$ are therefore}
\begin{equation} \label{eq-sseq2}
(+,+,+,\dots,+,+)\,\text{ or }\,(-,-,+,\dots,+,+)\,\text{ or }\, \dots\,\text{ or }\, (-,-,-,\dots,-,+),
\end{equation}
giving a total of $m+1$ orbits. We label the sign sequences in~\eqref{eq-sseq2} from left to right by an index $\tau$ that runs from $m+1$ up to $2m+1$. (Note that the first sign sequence in~\eqref{eq-sseq2} is the same as the last sign sequence in~\eqref{eq-sseq1}, which is why they share the same value of $\tau$.) If $y_0 = 0$, we again obtain the same sign sequences as those listed in~\eqref{eq-sseq2}.

\vspace*{0.2cm}
\noindent \emph{Case 2a}: $f_0 < 0$, $z_0 > 0$. The possible sign sequences are the same as those listed in~\eqref{eq-sseq2}, and we likewise label them from left to right by an index $\tau$ that runs from $m+1$ up to $2m+1$.

\vspace*{0.2cm}
\noindent \emph{Case 2b}: $f_0 < 0$, $z_0 < 0$. The possible sign sequences are the same as those listed in~\eqref{eq-sseq1}, and we likewise label them from left to right by an index $\tau$ that runs from $1$ up to $m+1$.
\end{proof}
%
Substituting the results of Lemmas~\ref{lem-compJ} and~\ref{prop-realbound} into the mass at $\infty$ in~\eqref{eq-deltamiddle}, we deduce that this mass is bounded by
\begin{equation}
    \leq \sum_{m = 0}^n \frac{1}{2^{m+n}} \cdot 2^{2n+1} \cdot \prod_{p > 2} \frac{p^{2n^2+n}}{\#G(\BZ/p\BZ)} \cdot \int_{\substack{f \in \scr{I}(m) \\ H(f) < X}} (2m+1) df \leq 3 \cdot 2^{n} \cdot \prod_{p > 2} \frac{p^{2n^2+n}}{\#G(\BZ/p\BZ)} \cdot \int_{H(f) < X} df,
\end{equation}
as necessary. This completes the proof of Proposition~\ref{prop-thisisforr}.
\end{proof}

\subsection{Step (B): Bounding the Mass at $p$ when $2 \neq p \nmid f_0$} \label{sec-overp}

Let $\mc{I}_p(m)$ denote the set of monic degree-$(2n+1)$ polynomials over $\BZ/p\BZ$ (\emph{not necessarily} separable) having $m$ distinct irreducible factors. When $2 \neq p \nmid f_0$, we obtain the following result, which bounds the mass at $p$ in~\eqref{eq-deltamiddle}: 
\begin{proposition} \label{prop-denstoobig}
For each prime $p \nmid 2f_0$, we have that
\begin{equation*}
   \int_{T \in V(\BZ_p)} \frac{\psi_p(T)}{m_p(T)} dT  \leq \min\left\{1,\sum_{m = 1}^{2n+1}  \frac{p+1}{2^{m-1}} \cdot \frac{\#G(\BZ/p\BZ)}{p^{2n^2+n}} \cdot \frac{\#\mc{I}_p(m)}{p^{2n+1}}\right\}
\end{equation*}
\end{proposition}
\begin{remark}
    As we later show in \S\ref{sec-hillary}, combining the bound in Proposition~\ref{prop-denstoobig} over all primes $p \nmid 2f_0$ yields a total saving of $O(2^{-\varepsilon_1 n^{\varepsilon_2}})$.
\end{remark}
\begin{proof}[Proof of Proposition~\ref{prop-denstoobig}]

 Given a monic degree-$(2n+1)$ polynomial $f$ over $\BZ/p\BZ$, let $\ol{\Sigma}_f^{\on{sol}} \subset V(\BZ/p\BZ)$ denote the mod-$p$ reduction of the subset $\{T \in \mathscr{P}_p : \on{ch}(T) \equiv f \pmod p\}$. Because $G$ is smooth over $\BZ_p$, the map $G(\BZ_p) \to G(\BZ/p\BZ)$ is surjective, implying that $\ol{\Sigma}_f^{\on{sol}}$ is $G(\BZ/p\BZ)$-invariant. Then, since $m_p(T) \geq 1$, we have
\begin{align}
 \int_{T \in V(\BZ_p)} \frac{\psi_p(T)}{m_p(T)} dT &\leq \int_{T \in \mathscr{P}_p} dT  \label{eq-porbit1} \\
 & \leq \sum_{m = 1}^{2n+1} \sum_{f \in \mc{I}_p(m)}  \frac{\#\ol{\Sigma}_f^{\on{sol}}}{p^{\dim V}} = \sum_{m = 1}^{2n+1} \sum_{f \in \mc{I}_p(m)}  \sum_{T \in G(\BZ/p\BZ)\backslash \ol{\Sigma}_f^{\on{sol}}} \frac{\#\mc{O}(T)}{p^{2n^2+3n+1}}, \nonumber
\end{align}
where for each $T \in V(\BZ/p\BZ)$, we denote by $\mc{O}(T)$ the $G(\BZ/p\BZ)$-orbit of $T$. Let $\on{Stab}(T) \subset G(\BZ/p\BZ)$ denote the stabilizer of $T$. Substituting $\#\mc{O}(T) = \#G(\BZ/p\BZ)/\#\on{Stab}(T)$ into~\eqref{eq-porbit1} yields that
\begin{equation} \label{eq-porbit2}
\int_{T \in V(\BZ_p)} \frac{\psi_p(T)}{m_p(T)} dT  \leq \sum_{m = 1}^{2n+1} \sum_{f \in \mc{I}_p(m)}   \sum_{T \in G(\BZ/p\BZ)\backslash \ol{\Sigma}_f^{\on{sol}}} \frac{1}{\#\on{Stab}(T)}\cdot \frac{\#G(\BZ/p\BZ)}{p^{2n^2+3n+1}}.
\end{equation}
The following lemma gives a formula for $\#\on{Stab}(T)$ that is independent of $f \in \mathscr{I}_p(m)$:
\begin{lemma} \label{lem-stablowerpodd}
For $T \in \ol{\Sigma}_f^{\on{sol}}$, we have that $\#\on{Stab}(T) = 2^{m-1}$.
\end{lemma}
\begin{proof}[Proof of Lemma~\ref{lem-stablowerpodd}]
Let $K_f = (\BZ/p\BZ)[x]/(f)$. By Proposition~\ref{prop-slo}, $\#\on{Stab}(T) = \#K_f^\times[2]_{\on{N}\equiv 1}$. 
Letting $f$ split over $\BZ/p\BZ$ as $f(x) = \prod_{i = 1}^m f_i(x)^{n_i}$, where the $f_i$ are distinct and irreducible, we have that $K_f \simeq \prod_{i = 1}^m (\BZ/p\BZ)[x]/(f_i^{n_i})$. Consequently, $K_f^\times[2]$ contains the $2^m$ elements of the form $(\pm 1, \dots, \pm 1)$. Exactly half of these elements have norm $1$, so $\#K_f^\times[2]_{\on{N}\equiv 1} = 2^{m-1}$.
\end{proof}
To bound the right-hand side of~\eqref{eq-porbit2}, it remains to control the number of $G(\BZ/p\BZ)$-orbits on $\ol{\Sigma}_f^{\on{sol}}$. But there can only be as many orbits as there are points in $\BP^1(\BZ/p\BZ)$, which has size $p+1$. Combining this observation with~\eqref{eq-porbit2} and Lemma~\ref{lem-stablowerpodd} yields the proposition.
\end{proof}
\begin{remark}
It is well-known (see~\cite[\S6.1]{MR3156850}) that for each odd prime $p$, we have
\begin{equation} \label{eq-gsize}
\#G(\BZ/p\BZ) = p^{n^2} \cdot \prod_{i = 1}^n (p^{2i}-1) \leq p^{2n^2+n}.
\end{equation}
Further note that $\on{Vol}(G(\BZ_p)) = \#G(\BZ/p\BZ)/p^{2n^2+n}$.
\end{remark}

\subsection{Step (B): Bounding the Mass at $2$ when $2 \nmid f_0$} \label{sec-orbates}

Here, it turns out to be ineffective to work over $\BZ/2\BZ$ or even $\BZ/4\BZ$; instead, we work over $\BZ/8\BZ$. 
Let $\mc{I}_8(m)$ denote the set of monic degree-$(2n+1)$ polynomials in $(\BZ/8\BZ)[x]$ having $m$ distinct irreducible factors over $\BZ/2\BZ$. When $2 = p \nmid f_0$, we obtain the following result, which shows that the mass at $2$ in~\eqref{eq-deltamiddle} contributes a saving of $O(2^{-2n})$:
\begin{proposition}  \label{prop-2denstoobig}
We have that
\begin{equation*}
  \int_{T \in V(\BZ_2)} \frac{\psi_2(T)}{m_2(T)} dT  \leq \min\left\{1,\sum_{m = 1}^{2n+1}  \frac{12}{2^{2n+m-1}} \cdot 2 \cdot \frac{\#\mc{I}_8(m)}{8^{2n+1}}\right\}
\end{equation*}
\end{proposition}
\begin{proof}
Given a monic degree-$(2n+1)$ polynomial $f$ over $\BZ/8\BZ$, let $\ol{\Sigma}_f^{\on{sol}} \subset V(\BZ/8\BZ)$ denote the closure under the action of $G(\BZ/8\BZ)$ of the mod-$8$ reduction of the subset $\{T \in \scr{P}_2 : \on{ch}(T) \equiv f \pmod8\}$ (note that this subset may not be \emph{a priori} $G(\BZ/8\BZ)$-invariant, because $G$ is not smooth over $\BZ_2$ and the map $G(\BZ_2) \to G(\BZ/8\BZ)$ is not surjective). Then we have
\begin{align} \label{eq-2orbit1}
\int_{T \in V(\BZ_2)} \frac{\psi_2(T)}{m_2(T)} dT & \leq \int_{T \in \mathscr{P}_2} dT \\ & \leq \sum_{m = 1}^{2n+1} \sum_{f \in \mc{I}_8(m)}  \frac{\#\ol{\Sigma}_f^{\on{sol}}}{8^{\dim V}} = \sum_{m = 1}^{2n+1}\sum_{f \in \mc{I}_8(m)}  \sum_{T \in G(\BZ/8\BZ)\backslash \ol{\Sigma}_f^{\on{sol}}} \frac{\#\mc{O}(T)}{8^{2n^2+3n+1}}, \nonumber
\end{align}
where for each $T \in V(\BZ/8\BZ)$, we denote by $\mc{O}(T)$ the $G(\BZ/8\BZ)$-orbit of $T$. Let $\on{Stab}(T) \subset G(\BZ/8\BZ)$ denote the stabilizer of $T$. Substituting $\#\mc{O}(T) = \#G(\BZ/8\BZ)/\#\on{Stab}(T)$ into~\eqref{eq-2orbit1} yields that
\begin{equation} \label{eq-2orbit2}
\int_{T \in V(\BZ_2)} \frac{\psi_2(T)}{m_2(T)} dT \leq \sum_{m = 1}^{2n+1} \sum_{f \in \mc{I}_8(m)}   \sum_{T \in G(\BZ/8\BZ)\backslash \ol{\Sigma}_f^{\on{sol}}} \frac{1}{\#\on{Stab}(T)}\cdot \frac{\#G(\BZ/8\BZ)}{8^{2n^2+3n+1}}.
\end{equation}
The following lemma gives a lower bound for $\#\on{Stab}(T)$ that is independent of $f \in \mathscr{I}_8(m)$:
\begin{lemma}\label{lem-stabbound}
For $T \in \ol{\Sigma}_f^{\on{sol}}$, we have that $\#\on{Stab}(T) \geq 2^{2n+m-1}$.
\end{lemma}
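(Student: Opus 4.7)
The plan is to mimic the proof of Lemma~\ref{lem-stablowerpodd}, but keeping track of the much richer $2$-torsion available modulo $8$: whereas $(\BZ/p\BZ)^\times[2] = \{\pm 1\}$ for $p$ odd, the entire unit group $(\BZ/8\BZ)^\times = \{1,3,5,7\}$ consists of $2$-torsion, so each local factor of $R_F\otimes\BZ/8\BZ$ contributes substantially more to the stabilizer. First I would note that the ``multiplication preserves the bilinear forms'' observation underlying Theorem~\ref{thm-theconstruction} applies verbatim over $\BZ/8\BZ$: given $\bar r\in (R_F\otimes\BZ/8\BZ)^\times$ with $\bar r^2=1$, any lift $r\in R_F\otimes\BZ_2$ satisfying $r^2\equiv 1\pmod 8$ yields $\pi_{2n}(\delta^{-1}(r\alpha)(r\beta)) \equiv \pi_{2n}(\delta^{-1}\alpha\beta)\pmod 8$, so multiplication by $\bar r$ on $I/8I$ preserves $(A,B)\pmod 8$. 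Consequently $\on{Stab}(T)$ contains a copy of $(R_F\otimes\BZ/8\BZ)^\times[2]_{\on{N}\equiv 1}$, and the task reduces to bounding this group from below.

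Next I would simplify the ring. Because $2\nmid f_0$, the element $f_0$ is a unit in $\BZ_2$, and a direct computation using the basis~\eqref{eq-rfbasis} and multiplication table~\eqref{eq-multtable} shows $R_F\otimes\BZ_2 = \BZ_2[\zeta_1] \simeq \BZ_2[x]/(F_{\on{mon}}(x,1))$ via $\zeta_1 = f_0\theta\leftrightarrow x$, and hence $R_F\otimes\BZ/8\BZ\simeq (\BZ/8\BZ)[x]/(f)$. Writing the $\BF_2$-factorization as $\bar f\equiv\prod_{i=1}^m \bar f_i^{n_i}$ with $\bar f_i$ distinct irreducible of degree $d_i$ (so $\sum n_id_i = 2n+1$), Hensel's lemma lifts this to a $\BZ_2$-factorization $f=\prod_{i=1}^m g_i$ and produces
\[
R_F\otimes\BZ/8\BZ \;\simeq\; \prod_{i=1}^m R_i, \qquad R_i := (\BZ/8\BZ)[x]/(g_i),
\]
with each $R_i$ a local $\BZ/8\BZ$-algebra of rank $n_id_i$ and residue field $\BF_{2^{d_i}}$.

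The heart of the argument is a count of $\#R_i^\times[2]$. Since the residue field has characteristic $2$, any $\varepsilon$ with $\varepsilon^2=1$ reduces to $1$ modulo the maximal ideal; writing $\varepsilon = 1+2\alpha$ with $\alpha\in R_i/4R_i$ and expanding, $\varepsilon^2\equiv 1\pmod 8$ is equivalent to the idempotent equation $\alpha^2=\alpha$ in the local $\BF_2$-algebra $R_i/2R_i$, which has only the trivial solutions $\bar\alpha\in\{0,1\}$. Each residue admits $|R_i/2R_i| = 2^{n_id_i}$ lifts to $R_i/4R_i$, yielding $\#R_i^\times[2] = 2^{n_id_i+1}$. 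Multiplying across $i$ gives $\#(R_F\otimes\BZ/8\BZ)^\times[2] = 2^{(2n+1)+m}$. Since the norm map lands in $(\BZ/8\BZ)^\times[2]$, which has order $4$, the norm-$1$ kernel has order at least
\[
\#\on{Stab}(T) \;\geq\; \frac{2^{(2n+1)+m}}{4} \;=\; 2^{2n+m-1}.
\]

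I expect the main obstacle to be the idempotent-reduction step: although $R_i/2R_i$ can carry nilpotents when $n_i>1$, locality forces the equation $\alpha^2=\alpha$ to have only the two trivial solutions, and crucially this condition pulls back cleanly from $R_i/2R_i$ to $R_i/4R_i$ without further restrictions. Everything else is Hensel plus bookkeeping.
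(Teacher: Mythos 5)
Your proof follows the same strategy as the paper's: use $f_0 \in (\BZ/8\BZ)^\times$ to identify $R_F$ with the monogenic ring $(\BZ/8\BZ)[x]/(f)$, decompose it into local factors $R_i$ indexed by the distinct irreducible factors of $f$ modulo $2$, exhibit $2^{\deg f_i + 1}$ two-torsion units in each $R_i^\times$, and divide by $\#(\BZ/8\BZ)^\times = 4$ to account for the norm condition. In fact your family $\{1 + 2\alpha : \ol{\alpha} \in \{0,1\}\} = \{1,3\} + 4R_i$ is exactly the same subset of $R_i^\times[2]$ as the paper's explicit family $\{\varepsilon_0 + \sum_j \varepsilon_j\theta_i^j : \varepsilon_0 \in (\BZ/8\BZ)^\times,\ \varepsilon_j \in \{0,4\}\}$, just parametrized differently, so the two proofs agree in substance.

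One step as presented, however, is not justified: you pass from ``$\varepsilon$ reduces to $1$ modulo the maximal ideal'' directly to ``$\varepsilon = 1 + 2\alpha$.'' The maximal ideal of the local ring $R_i = (\BZ/8\BZ)[x]/(g_i)$ is $(2, \ol{f}_i)$, which strictly contains $2R_i$ whenever $n_i > 1$, so the residue-field argument only shows $\varepsilon \equiv 1$ modulo $(2,\ol{f}_i)$, not modulo $2$. Consequently the claimed exact count $\#R_i^\times[2] = 2^{n_id_i + 1}$ is not established by what you wrote: a priori there could be $2$-torsion units $\varepsilon = 1 + u$ with $u$ a nilpotent not lying in $2R_i$. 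This gap is harmless for the lemma, since only a lower bound is needed and the $2^{n_id_i+1}$ elements you exhibit \emph{are} distinct $2$-torsion units (each $1 + 2\alpha$ with $\ol\alpha \in \{0,1\}$ satisfies $\varepsilon^2 = 1 + 4(\alpha + \alpha^2) \equiv 1 \pmod 8$, and distinct $\alpha \bmod 4R_i$ give distinct $\varepsilon \bmod 8$). The paper avoids the issue entirely by only asserting that $R_F^\times[2]$ \emph{contains} its explicit family, making no claim to have enumerated it; you should either do the same or supply the missing argument ruling out $u \notin 2R_i$.
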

\begin{proof}[Proof of Lemma~\ref{lem-stabbound}]
Let $R_f = (\BZ/8\BZ)[x]/(f)$. By Remark~\ref{rem-robust}, $\on{Stab}(T)$ contains a subgroup isomorphic to $R_f^\times[2]_{\on{N}\equiv 1} \defeq \{\rho \in R_f^\times : \rho^2 = 1 = \on{N}(\rho)\}$.
Let $f$ split over $\BZ/8\BZ$ as $f(x) = \prod_{i = 1}^m f_i(x)$, where the $f_i$ have the property that their mod-$2$ reductions are powers of distinct irreducible polynomials. Then it follows from~\cite[Theorem~7]{MR4101402} together with the Chinese Remainder Theorem that $R_f \simeq \prod_{i = 1}^m (\BZ/8\BZ)[x]/(f_i)$, and $R_f^\times[2]$ contains the $2^{2n+m+1}$ elements of the form
$$\left(\dots, \rho_0 + \sum_{j = 1}^{d_i-1} \rho_j\theta_i^j, \dots\right) \in \prod_{i = 1}^m (\BZ/8\BZ)[x]/(f_i),$$
where $\rho_0$ is any element of $(\BZ/8\BZ)^\times$ and $\rho_j$ is any element of $\{0,4\} \subset \BZ/8\BZ$ for each $j$, $\theta_i$ is the image of $x$ in $(\BZ/8\BZ)[x]/(f_i)$, and $d_i = \deg f_i$ for each $i$. At least $\frac{1}{4}$ of these elements have norm $1$, so $\#\on{Stab}(T) \geq \#R_F^\times[2]_{\on{N}\equiv 1} \geq 2^{2n+m-1}$.
\end{proof}

Let $\widehat{G}$ be the split odd orthogonal group scheme, defined in the same way as $G$ but without the determinant-$1$ condition. Observe that the determinant map $\on{det} \colon \widehat{G}(\BZ/8\BZ) \to (\BZ/8\BZ)^\times$ is surjective: for each $\sigma \in (\BZ/8\BZ)^\times$, the diagonal matrix with row-$(n+1)$, column-$(n+1)$ entry equal to $\sigma$ and all other diagonal entries equal to $1$ is orthogonal with respect to $A_0$ and has determinant $\sigma$. Combining this observation with the computation of $\#\widehat{G}(\BZ/8\BZ)$ in Proposition~\ref{prop-theprime2case} (to follow) yields
\begin{equation} \label{eq-easy8bound}
\frac{\#G(\BZ/8\BZ)}{8^{2n^2+n}} = \frac{\#\widehat{G}(\BZ/8\BZ)}{4 \cdot 8^{2n^2+n}} = 2 \cdot \frac{(2^n-1)\cdot\prod_{i = 1}^{n-1}(2^{2i}-1)}{2^{n^2}}\leq 2 \cdot 1 = 2.
\end{equation}
To bound the right-hand side of~\eqref{eq-2orbit2}, it remains to control the number of $G(\BZ/8\BZ)$-orbits on $\ol{\Sigma}_f^{\on{sol}}$. But there can only be as many orbits as there are points in $\mathbb{P}^1(\BZ/8\BZ)$, which has size $12$. Combining this observation with~\eqref{eq-2orbit2}, Lemma~\ref{lem-stabbound}, Proposition~\ref{prop-theprime2case}, and~\eqref{eq-easy8bound} yields the proposition.
\end{proof}

In the following proposition, we determine the value of $\#\widehat{G}(\BZ/8\BZ)$, which was used in~\eqref{eq-easy8bound}:
\begin{proposition} \label{prop-theprime2case}
We have that
$$\#\widehat{G}(\BZ/8\BZ) = 2^{5n^2+3n+3}\cdot(2^n-1)\cdot\prod_{i = 1}^{n-1}(2^{2i}-1)$$
when $n \geq 1$, and $\#\widehat{G}(\BZ/8\BZ) = 4$ when $n = 0$.
\end{proposition}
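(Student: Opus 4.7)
The plan is to compute $\#\widehat{G}(\BZ/8\BZ)$ via two independent methods that should agree, with the formula serving as a consistency check. For the base case $n=0$, the group $\widehat{G}$ is the subscheme cut out by $g^2 = 1$ on the one-dimensional space $W = \BZ u$ with $[u,u]=1$, so a direct enumeration of square roots of $1$ in $\BZ/8\BZ$ gives $\#\widehat{G}(\BZ/8\BZ) = 4$, matching the claim.

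For $n \geq 1$, the first method is orbit-stabilizer induction on the rank. The group $\widehat{G}(\BZ/8\BZ)$ acts on the set $\mc{V}_1 \subset W \otimes \BZ/8\BZ$ of vectors $v$ with $[v,v] \equiv 1 \pmod 8$ that are $\widehat{G}(\BZ/8\BZ)$-equivalent to $u$. By a Hensel-style version of Witt's extension theorem (working through the intermediate reductions $\BZ/8\BZ \to \BZ/4\BZ \to \BZ/2\BZ$), any two such vectors differ by an orthogonal transformation, so the orbit of $u$ is all of $\mc{V}_1$. The stabilizer of $u$ is the orthogonal group of the rank-$2n$ even hyperbolic sublattice $u^\perp = \BZ \langle e_1,\ldots,e_n,f_1,\ldots,f_n\rangle$, whose $\BZ/8\BZ$-point count is known in closed form from the classical theory of local densities of even-rank hyperbolic forms. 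Combining these two factors yields the claimed product formula; the bulk of the work is the careful count of $\#\mc{V}_1$, which one carries out by first fixing $[v,v] \equiv 1 \pmod 2$ modulo $2$, then tallying the lifts modulo $4$ and modulo $8$ at each step by inspecting the coordinate expression $[v,v] = \sum_i v_i v_{n+i} + v_u^2 + \sum_j v_{n+j} v_j$.

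The second method bypasses the induction and computes $\#\widehat{G}(\BZ/8\BZ)$ directly via the Smith-Minkowski-Siegel local density formula for the split odd orthogonal group at the prime $2$. Evaluating the local factor at $p = 2$ and level $p^3 = 8$ for the standard hyperbolic form of rank $2n+1$ produces an explicit expression in $n$; simplification (using $2^{2i}-1 = (2^i-1)(2^i+1)$) puts it in the same product form as in Method 1. Agreement of the two evaluations gives a cross-check that picks up any missed factor of $2$ coming from the non-smoothness of $\widehat{G}$ at the prime $2$.

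The main obstacle will be accounting for the fact that $\widehat{G}$ is not smooth over $\BZ_2$: the reduction map $\widehat{G}(\BZ/8\BZ) \to \widehat{G}(\BF_2)$ is neither surjective nor does it have a clean Lie-algebra-sized kernel, so fiber sizes over the different reductions must be computed by hand rather than inferred from a dimension count. This is precisely why the computation of $\#\mc{V}_1$ needs the explicit coordinate analysis above, and why the existence of a second method (the mass-formula route) is valuable as a sanity check.
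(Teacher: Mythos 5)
Your proposal is a plan rather than a proof, and while your second method (the Siegel/Conway--Sloane local-density route) is essentially the same as the paper's second proof (which reconciles Cho's smooth-model computation of $\operatorname{Vol}(G(\BZ_2))$ against the Conway--Sloane local density $\alpha_2$), your first method has a concrete gap that would lead to a wrong count.

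The problem is the Witt-type claim that the $\widehat{G}(\BZ/8\BZ)$-orbit of $u$ is all of $\{v : [v,v] \equiv 1 \pmod 8\}$. This is false at $p=2$. Take $n=1$ and $v = e_1 + u$. Then $[v,v] = 1$, but the Gram matrix of the form restricted to $v^\perp$ in the basis $\{e_1, u-f_1\}$ is $\left(\begin{smallmatrix} 0 & -1 \\ -1 & 1 \end{smallmatrix}\right)$, which is an \emph{odd} unimodular rank-$2$ form (the vector $u-f_1$ has odd norm). Meanwhile $u^\perp = \langle e_1, f_1\rangle$ carries the \emph{even} hyperbolic form $\left(\begin{smallmatrix} 0 & 1 \\ 1 & 0\end{smallmatrix}\right)$. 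Since evenness/oddness of the orthogonal complement is an orbit invariant, $v$ is not in the orbit of $u$ even though both have norm $1$. Over a field, or over $\BZ_p$ for odd $p$, rank and discriminant determine the unimodular quadratic space and so Witt extension gives full transitivity on norm-$1$ vectors; at $p=2$ the finer classification (type I vs.~type II, oddity) introduces an extra orbit invariant that your proposed "tally the lifts modulo $4$ and modulo $8$ by inspecting the coordinate expression for $[v,v]$" does not see. Concretely, a direct count for $n=1$ gives $128$ norm-$1$ vectors mod $8$, but if $\#\widehat{G}(\BZ/8\BZ) = 2048$ as claimed then the orbit of $u$ has size $2048/\#\operatorname{Stab}(u) < 128$; plugging in the naive count of $\mc{V}_1$ would produce a formula off by a factor of $2$.

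Two further remarks. First, you call method one "orbit-stabilizer induction on the rank," but as written it is a single reduction to the order of the rank-$2n$ even orthogonal group over $\BZ/8\BZ$, which you cite as "known in closed form from the classical theory of local densities" --- i.e., you fall back on exactly the mass-formula machinery of method two, so the two methods are not independent. Second, even with the even/odd constraint correctly imposed, the orbit count would have to be matched carefully to $\#\operatorname{Stab}(u)$, and both depend on $2$-adic local density computations that are notoriously delicate; the paper sidesteps all of this, in its first proof by citing a black-box recursive formula for the full group order after diagonalizing $A_0$ modulo $8$ into one of four types, and in its second proof by comparing two pre-existing density formulas at level $2^3$.
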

\begin{proof}
We apply the recursive formula for computing sizes of orthogonal groups modulo $8$ given in~\cite[\S4.1.3]{MR83960}. The first step is to diagonalize $A_0$. One readily verifies that there exists $g_1 \in \on{GL}_{2n+1}(\BZ)$ such that $g_1A_0g_1^T$ is equal to the diagonal matrix with first $n+1$ diagonal entries equal to $1$ and remaining $n$ diagonal entries equal to $-1$. By~\cite[proof of Lemma~3]{MR12640}, there exists $g_2 \in \on{GL}_{2n+1}(\BZ_2)$ such that $g_2g_1A_0g_1^Tg_2^T$ is equivalent modulo $8$ to the following diagonal matrix:
\begin{equation} \label{eq-diagform}
    \left[\begin{array}{cccccc} 1 & \cdots & 0 & 0 & 0 & 0 \\
    \vdots & \ddots & \vdots & \vdots & \vdots & \vdots \\
    0 & \cdots & 1 & 0 & 0 & 0 \\
    0 & \cdots & 0 & a_1 & 0 & 0 \\
    0 & \cdots & 0 & 0 & a_2 & 0 \\
    0 & \cdots & 0 & 0 & 0 & a_3
    \end{array}\right], \,\, \text{where} \,\, (a_1, a_2, a_3) = \begin{cases} (1,1,1) & \text{ if $2n+1 \equiv 1 \pmod 8$,} \\ (1,1,7) & \text{ if $2n+1 \equiv 3 \pmod 8$,} \\ (1,3,3) & \text{ if $2n+1 \equiv 5 \pmod 8$,} \\ (3,3,7) & \text{ if $2n+1 \equiv 7 \pmod 8$} \end{cases}
\end{equation}
Using~\eqref{eq-diagform} together with the recursive formula displayed in~\cite[\S4.1.3]{MR83960} yields that
\begin{equation} \label{eq-intermedform}
\#\widehat{G}(\BZ/8\BZ) = \prod_{j = 1}^{2n+1} \big(8^j+4^{j+1}\cdot f(u_{2n+1-j})+2(4\sqrt{2})^j \cdot \cos(\tfrac{\pi}{4} K_{2n+1-j})-8 \cdot 4^jh(u_{2n+1-j})\big),
\end{equation}
where the functions $f$ and $h$ are defined by
$$f(u_i) = \begin{cases} 1 & \text{ if $u_i \equiv 0 \pmod 8$,} \\ -1 & \text{ if $u_i \equiv 4 \pmod 8$,} \\ 0 & \text{ otherwise} \end{cases} \quad \text{and} \quad h(u_i) = \begin{cases} 1 & \text{ if $i < 2n$ and $u_i \equiv 0 \pmod 8$,} \\ 0 & \text{ otherwise} \end{cases}$$
and where the quantities $u_i$ and $h_i$ are given as follows:

\vspace*{0.2cm}
\noindent \emph{Case 1: $(a_1, a_2, a_3) = (1,1,1)$}. We have that
\begin{equation} \label{eq-uk1}
    u_{2n+1-j} = j-1 \quad \text{and} \quad K_{2n+1-j} = j-2.
\end{equation}

\vspace*{0.2cm}
\noindent \emph{Case 2: $(a_1, a_2, a_3) = (1,1,7)$}. We have that
\begin{equation} \label{eq-uk2}
    u_{2n+1-j} = \begin{cases} j+5 & \text{ if $j \geq 2$,} \\ 0 & \text{ if $j = 1$}  \end{cases}\quad \text{and} \quad K_{2n+1-j} = \begin{cases} j-4 & \text{ if $j \geq 2$,}\\ -15 & \text{ if $j = 1$} \end{cases}
\end{equation}

\vspace*{0.2cm}
\noindent \emph{Case 3: $(a_1, a_2, a_3) = (1,3,3)$}. We have that
\begin{equation} \label{eq-uk3}
    u_{2n+1-j} = \begin{cases} j+3 & \text{ if $j \geq 3$,} \\ 3 & \text{ if $j = 2$,} \\ 0 & \text{ if $j = 1$} \end{cases} \quad \text{and} \quad K_{2n+1-j} = \begin{cases} j-6 & \text{ if $j \geq 3$,} \\ -8 & \text{ if $j = 2$,} \\ -7 & \text{ if $j = 1$} \end{cases}
\end{equation}

\vspace*{0.2cm}
\noindent \emph{Case 4: $(a_1, a_2, a_3) = (3,3,7)$}. We have that
\begin{equation} \label{eq-uk4}
   u_{2n+1-j} = \begin{cases} j+9 & \text{ if $j \geq 4$,} \\ 10 & \text{ if $j = 3$,} \\ 7 & \text{ if $j = 2$,} \\ 0 & \text{ if $j = 1$} \end{cases} \quad \text{and} \quad K_{2n+1-j} = \begin{cases} j-8 & \text{ if $j \geq 4$,} \\ -9 & \text{ if $j = 3$,} \\ -8 & \text{ if $j = 2$,} \\ -15 & \text{ if $j = 1$} \end{cases}
\end{equation}

\noindent Substituting~\eqref{eq-uk1},~\eqref{eq-uk2},~\eqref{eq-uk3}, and~\eqref{eq-uk4} into~\eqref{eq-intermedform} and simplifying yields the desired formula.

We can also compute $\#\widehat{G}(\BZ/8\BZ)$ by comparing two different formulas for a quantity known as ``the $2$-adic density of the quadratic lattice'' defined by the matrix $A_0$, which is up to normalization the same as the $2$-adic volume of $G(\BZ_2)$ with respect to Haar measure. We denote this quantity by $\alpha_2$. One of the two formulas for $\alpha_2$ is given in~\cite{MR3347991}, where Cho constructs a smooth model of the group scheme $G$ over $\BZ_2$. Letting $\wt{G}$ denote the special fiber of this smooth model, one can show by applying~\cite[Lemma~4.2, Remark~4.3, and Theorem 5.2]{MR3347991} that
\begin{equation} \label{eq-cho}
\alpha_2 = 2 \cdot 2^{-2n^2-n} \cdot \#\wt{G}(\BZ/2\BZ) = 2^{-2n^2+n+3} \cdot \#\on{SO}_{2n}(\BZ/2\BZ),
\end{equation}
where $\on{SO}_{2n}$ denotes the special orthogonal group on a $2n$-dimensional split orthogonal space. To be precise, $\alpha_2$ is defined in~\cite{MR3347991} to be $1/2$ of the quantity in~\eqref{eq-cho}, to account for the number of connected components of $G$ in $\widehat{G}$. However, we have removed this factor of $1/2$ to make the definition of $\alpha_2$ consistent with the one given by Conway and Sloane in~\cite[\S12]{MR965484}, \mbox{who define $\alpha_2$ by}
\begin{equation} \label{eq-conway}
    \alpha_2 = \frac{\#\widehat{G}(\BZ/2^r\BZ)}{(2^r)^{2n^2 + n}}
\end{equation}
for any sufficiently large positive integer $r$. It follows from~\cite[Proposition~5.6.1(ii) and proof of Lemma~5.6.5]{MR1245266} that we can take $r = 3$ in~\eqref{eq-conway}. Comparing~\eqref{eq-cho} and~\eqref{eq-conway} together with the fact that $\#\on{SO}_{2n}(\BZ/2\BZ) = 2^{n^2-n}\cdot(2^n-1)\cdot\prod_{i = 1}^{n-1}(2^{2i}-1)$ for $n \geq 1$ (see~\cite[\S6]{MR3347991}) yields that
\begin{equation*}
\#\widehat{G}(\BZ/8\BZ) = 2^{4n^2+4n+3} \cdot \#\on{SO}_{2n}(\BZ/2\BZ) = 2^{5n^2+3n+3}\cdot(2^n-1)\cdot\prod_{i = 1}^{n-1}(2^{2i}-1).\qedhere
\end{equation*}
\end{proof}
\begin{remark}
The strategy for the second proof of Proposition~\ref{prop-theprime2case} can likewise be used to obtain a formula for $\#\widehat{G}(\BZ/2^r\BZ)$ for each $r \geq 3$.
\end{remark}


\subsection{Step (B): Bounding the Mass at $p$ when $p \mid f_0$} \label{sec-overzpqp}

When $p \nmid f_0$, Propositions~\ref{prop-denstoobig} and~\ref{prop-2denstoobig} give sufficiently good control on the size of $\mathscr{P}_p$. But when $p \mid f_0$, the bounds in Propositions~\ref{prop-denstoobig} and~\ref{prop-2denstoobig} do not suffice. Indeed, since proving Theorems~\ref{thm-main} and~\ref{thm-hasse} involves averaging over \emph{monicizations} of forms in $\mathscr{F}_{2n+1}(f_0)$, we must obtain a bound that decays with $|f_0|_p$ at least as fast as $|f_0|_p^{2n^2+n}$, which is the $p$-adic density of the set of monicizations of forms in $\mathscr{F}_{2n+1}(f_0,\BZ_p)$.

In this section, we bound the mass at $p$ for primes $p \mid f_0$ by controlling the number of orbits of $G(\BQ_p)$ on $V(\BQ_p)$ that can arise from points of $\mathscr{S}_F(\BZ_p)$. Specifically, we obtain the following result, which shows that the mass at $p$ is bounded by $O(|f_0|_p^{2n^2+n})$ when $p \neq 2$ and by $O(2^{-n} \cdot |f_0|_2^{2n^2+n})$ when $p = 2$:
\begin{proposition} \label{prop-qpbound}
    For each prime $p \mid f_0$, we have that
\begin{align*}
    \int_{T \in V(\BZ_p)} \frac{\psi_{p}(T)}{m_p(T)} dT & \leq \begin{cases} 4 \cdot \on{Vol}(G(\BZ_p)) \cdot |f_0|_p^{2n^2+n}, & \text{if $p \neq 2$,} \\
8 \cdot 2^n \cdot |\mc{J}|_2 \cdot \on{Vol}(G(\BZ_2)) \cdot |f_0|_2^{2n^2+n}, & \text{if $p = 2$.} 
    \end{cases}
\end{align*}
\end{proposition}
\begin{proof}
We start with the following lemma, which gives a convenient formula for the mass at $p$:
\begin{lemma} \label{prop-intoverp}
Let $\varphi$ be a continuous $G(\BQ_p)$-invariant function on $V(\BZ_p)$ such that every element $T \in V(\BZ_p)$ in the support of $\varphi$ is separable. Then we have
\begin{align*}
& \int_{T \in V(\BZ_p)} \frac{\varphi(T)}{m_p(T)} dT = \\
& \quad |\mc{J}|_p \cdot \on{Vol}(G(\BZ_p)) \cdot \int_{\substack{(c_1, \dots, c_{2n+1}) \in \BZ_p^{2n+1} \\ }} \sum_{\substack{T \in G(\BQ_p)\backslash V(\BZ_p) \\ \on{ch}(T) = x^{2n+1} + c_1x^{2n}+\cdots+c_{2n+1}}}\frac{\varphi(T)}{\#\on{Stab}_{\BQ_p}(T)}\,\,dc_1\cdots dc_{2n+1},
\end{align*}
where $\mc{J} \in \BQ$ is the same constant that appears in Theorem~\ref{thm-thecount}.
\end{lemma}
\begin{proof}[Proof of Lemma~\ref{prop-intoverp}]
The proof is identical to that of~\cite[Corollary~11.3]{MR3156850}.
\end{proof}
To evaluate the sum within the integral that occurs in the formula given by Lemma~\ref{prop-intoverp}, we utilize the following lemma: 
\begin{lemma} \label{prop-qporbit}
Let $p$ be an odd $($resp., even$)$ prime, and suppose that $F \in \scr{F}_{2n+1}(f_0,\BZ_p)$ splits as a product $m$ distinct irreducible factors over $\BZ_p$. Then there are at most $2^{m+1}$ $($resp., $2^{n+m+2})$ orbits of $G(\BQ_p)$ on $V(\BQ_p)$ arising from points of $\mathscr{S}_F(\BZ_p)$ via the construction in \S\ref{sec-buildabear}. The size of the stabilizer in $G(\BQ_p)$ of any such orbit is at least $\#K_F^{\times}[2]_{\on{N}\equiv 1} = 2^{m-1}$.
\end{lemma}
\begin{proof}[Proof of Lemma~\ref{prop-qporbit}]
By Proposition~\ref{prop-rationalcorresp}, it suffices to bound the number of elements of $(K_F^\times/K_F^{\times 2})_{\on{N} \equiv 1}$ of the form $f_0 \cdot \delta$, where $\delta \in K_F^\times$ is the second coordinate of a pair $(I,\delta) \in H_F$ arising from a points of $\scr{S}_F(\BZ_p)$ via the construction in \S\ref{sec-buildabear}.

Fix $z_0 \in \BZ_p$, and consider the monic odd-degree hyperelliptic curve $C_{F,z_0}$ of genus $n$ over $\BQ_p$ defined by the affine equation $C_{F,z_0} \colon y^2 = F_{\on{mon}}(x, z_0)$. Notice that $C_{F,z_0}$ is a quadratic twist of $C_{F,1}$ and that for $z_0' \in \BZ_p$, the curves $C_{F,z_0}$ and $C_{F,z_0'}$ are isomorphic over $\BQ_p$ when $z_0$ and $z_0'$ represent the same class in $\BQ_p^\times/\BQ_p^{\times 2}$. Let $\{\eta_1, \dots, \eta_\ell\} \subset \BZ_p$ denote a complete set of representatives of elements of $\BQ_p^\times/\BQ_p^{\times 2}$, and note that we can take $\ell = 4$ when $p$ is odd and $\ell = 8$ when $p = 2$. Then each $C_{F,z_0}$ is isomorphic to one of $C_{F,\eta_1}, \dots, C_{F, \eta_\ell}$.

Let $(x_0, y_0, z_0) \in \scr{S}_F(\BZ_p)$ giving rise to a pair $(I,\delta)$. Then the point $(f_0x_0, y_0)$ is a $\BQ_p$-rational point on the curve $C_{F,z_0}$ and hence is identified with a $\BQ_p$-rational point $(x_0',y_0')$ on one of the curves $C_{F,\eta_i}$. In~\cite[\S5]{MR3156850}, Bhargava and Gross show that points in $C_{F,\eta_i}(\BQ_p)$ naturally give rise to orbits of $G(\BQ_p)$ on $V(\BQ_p)$ having characteristic polynomial $F_{\on{mon}}(x, \eta_i)$.  Under the construction of Bhargava and Gross, points of $C_{F,\eta_i}(\BQ_p)$ give rise to $G(\BQ_p)$-orbits on $V(\BQ_p)$ via the following composition:
\begin{equation} \label{eq-cassels}
C_{F,\eta_i}(\BQ_p) \to J_i(\BQ_p)/2J_i(\BQ_p) \to H^1(G_{\BQ_p},J_i[2]) \simeq (K_F^\times/K_F^{\times 2})_{\on{N}\equiv1},
\end{equation}
where $J_i$ denotes the Jacobian of $C_{F,\eta_i}$ and where we identify the set of $G(\BQ_p)$-orbits on $V(\BQ_p)$ having characteristic polynomial $F_{\on{mon}}(x,z_0)$ with $(K_F^\times/K_F^{\times 2})_{\on{N}\equiv1}$ via Proposition~\ref{prop-rationalcorresp}. The composite map in~\eqref{eq-cassels} is known as the \emph{Cassels map}, and was first studied by Cassels in~\cite{MR717589}.

Let $\theta' \in K_F$ be the image of $x$ under the identification $\BQ_p[x]/F_{\on{mon}}(x, 1) \simeq K_F$. If $y_0 = 0$, then $F_{\on{mon}}(x,\eta_i\cdot z)$ factors uniquely as
$$F_{\on{mon}}(x,\eta_i\cdot z) = \big(x - f_0\tfrac{x_0}{z_0}\eta_i \cdot z\big) \cdot \wt{F}_{\on{mon}}(x,\eta_i \cdot z)$$
where $\wt{F}_{\on{mon}} \in \mathscr{F}_{2n}(1, \BZ_p)$. (The notation $\wt{F}_{\on{mon}}$ makes sense, because $\wt{F}_{\on{mon}}$ is indeed the monicized form of $\wt{F}$ as defined in~\eqref{eq-defftilde}.) Using the explicit description of the Cassels map stated in~\cite[\S2]{MR2521292}, one finds that the point $(x_0', c') \in C_{F,\eta_i}(\BQ_p)$ gives rise to the class
$$\begin{rcases*} x_0' - \theta' \eta_i & \text{ if $y_0 \neq 0$,} \\ \wt{F}_{\on{mon}}(\theta' \eta_i, \eta_i) + (x_0' - \theta' \eta_i)  & \text{ if $y_0 = 0$}\end{rcases*} \equiv f_0 \cdot \delta \in (K_F^\times/K_F^{\times 2})_{\on{N}\equiv1}.$$
 Meanwhile, by Proposition~\ref{prop-rationalcorresp}, the class in $(K_F^\times/K_F^{\times 2})_{\on{N}\equiv1}$ corresponding to $(x_0,y_0, z_0) \in \mathscr{S}_F(\BZ_p)$ is also $f_0 \cdot \delta$. Thus, the number of elements of $(K_F^\times/K_F^{\times 2})_{\on{N} \equiv 1}$ of the form $f_0 \cdot \delta$ arising form points of $\mathscr{S}_F(\BZ_p)$ is bounded above by the total number of $G(\BQ_p)$-orbits on $V(\BQ_p)$ arising from $\BQ_p$-rational points on any one of the finitely many curves $C_{F,\eta_1}, \dots, C_{F, \eta_\ell}$. By~\cite[\S6.2]{MR3156850}, the number of $G(\BQ_p)$-orbits on $V(\BQ_p)$ arising from each $C_{F,\eta_i}$ is at most $2^{m_i-1}$ when $p$ is odd (resp., $2^{n+m_i-1}$ when $p = 2$), where $m_i$ is the number of distinct irreducible factors of $F_{\on{mon}}(x,\eta_i)$. But clearly $m_i = m$ for each $i$, so we get the desired upper bound of $\ell \cdot 2^{m-1}$ when $p$ is odd (resp., $2^{n+m-1}$ when $p = 2$).

As for the statement about stabilizers, by Theorem~\ref{thm-theconstruction}, the stabilizer of any orbit arising via $\mathsf{orb}_F$ contains a subgroup isomorphic to $K_F^\times[2]_{\on{N}\equiv1}$, which has size $\#K_F^\times[2]_{\on{N}\equiv1} = 2^{m-1}$.
\end{proof}

If $p \neq 2$, combining Lemmas~\ref{prop-intoverp} and~\ref{prop-qporbit} along with~\eqref{eq-podd} yields that
\begin{align*}
\int_{T \in V(\BZ_p)} \frac{\psi_{p}(T)}{m_p(T)} dT & \leq |\mc{J}|_p \cdot \on{Vol}(G(\BZ_p)) \cdot \int_{\substack{\vec{c} = (c_1, \dots, c_{2n+1}) \in \BZ_p^{2n+1} \\ f_0^{i-1} \mid c_i \text{ for every }i }} \frac{2^{m(\vec{c})+1}}{2^{m(\vec{c})-1}}\,\,dc_1\cdots dc_{2n+1} \\
& = 4 \cdot \on{Vol}(G(\BZ_p)) \cdot |f_0|_p^{2n^2+n},
\end{align*}
    as necessary, where $m(\vec{c})$ is the number of irreducible factors of the polynomial $x^{2n+1}+\sum_{i = 1}^{2n+1} c_i \cdot x^{2n+1-i} \in \BZ_p[x]$. The case where $p = 2$ may be deduced analogously.

 This completes the proof of Proposition~\ref{prop-qpbound}.
\end{proof}

Now, let $p$ be an odd prime. If we restrict our consideration to those forms $F \in \mc{F}_{2n+1}(f_0, \BZ_p)$ such that $R_F$ is maximal, we can do better than the bound in Proposition~\ref{prop-qpbound}. Indeed, let $\mu_p^{\max}(f_0)$ denote the $p$-adic density of $F \in \mc{F}_{2n+1}(f_0, \BZ_p)$ such that $R_F$ is maximal, and let $\psi_p^{\max} \colon V(\BZ_p) \to \{0,1\} \subset \BR$ denote the indicator function of elements $T \in \mathscr{P}_p$ that arise from points $\mathscr{S}_F(\BZ_p)$ for a form $F \in \mc{F}_{2n+1}(f_0, \BZ_p)$ such that $R_F$ is maximal. Then we have the following result:
\begin{proposition} \label{prop-zpbound}
    For each prime $p \mid f_0$, we have that
\begin{align*}
    \int_{T \in V(\BZ_p)} \frac{\psi_p^{\max}(T)}{m_p(T)} dT & \leq \on{Vol}(G(\BZ_p)) \cdot |f_0|_p^{2n^2+n} \cdot\mu_p^{\max}(f_0).
\end{align*}
\end{proposition}
\begin{proof}
It follows from Lemma~\ref{prop-intoverp} that
\begin{align} \label{eq-zpmassbound}
    & \int_{T \in V(\BZ_p)} \frac{\psi_{p}^{\max}(T)}{m_p(T)} dT \leq \\
    & \quad |\mc{J}|_p \cdot \on{Vol}(G(\BZ_p)) \cdot \int_{\substack{(c_1, \dots, c_{2n+1}) \in \BZ_p^{2n+1} \\ }} \sum_{\substack{T \in G(\BZ_p)\backslash V(\BZ_p) \\ \on{ch}(T) = x^{2n+1} + c_1x^{2n}+\cdots+c_{2n+1}}}\frac{\psi_p^{\max}(T)}{\#\on{Stab}_{\BZ_p}(T)}\,\,dc_1\cdots dc_{2n+1}, \nonumber
\end{align}
When $F$ is such that $R_F$ is the maximal order in $K_F$, we have the following analogue of Lemma~\ref{prop-qporbit} bounding the number of orbits over $\BZ_p$ arising from points in $\mathscr{S}_F(\BZ_p)$:
\begin{lemma} \label{prop-overzp}
Let $p$ be an odd prime, and suppose that $R_F$ is maximal in $K_F$. Then there are at most $2^{m-1}$ orbits of $G(\BZ_p)$ on $V(\BZ_p)$  arising from points of $\mathscr{S}_F(\BZ_p)$ via the construction in \S\ref{sec-buildabear}. The size of the stabilizer in $G(\BZ_p)$ of any such orbit is $2^{m-1}$.
\end{lemma}
\begin{proof}[Proof of Lemma~\ref{prop-overzp}]
By~\cite[Theorem~30]{Swpreprint}, the number of $G(\BZ_p)$-orbits on $V(\BZ_p)$ arising from elements of $H_F$ via the correspondence in Theorem~\ref{thm-theconstruction} is equal to $\#(R_F ^\times/R_F^{\times 2})_{\on{N}\equiv1}=2^{m-1}$. By Theorem~\ref{thm-theconstruction}, the stabilizer of any such orbit has size $\#R_F^\times[2]_{\on{N}\equiv1} = 2^{m-1}$.
\end{proof}
Proposition~\ref{prop-zpbound} now follows by substituting the result of Lemma~\ref{prop-overzp} into the bound in~\eqref{eq-zpmassbound}.
\end{proof}

\subsection{Step (C): Finishing the Proof} \label{sec-hillary}

We can now apply Theorem~\ref{thm-cong2} and Propositions~\ref{prop-thisisforr},~\ref{prop-denstoobig},~\ref{prop-2denstoobig}, and~\ref{prop-qpbound} to obtain a bound on~\eqref{eq-deltamiddle}, and hence on~\eqref{eq-thekeybound}; combining the result with~\eqref{eq-numer1est} and~\eqref{eq-denomest} gives a bound on $\delta$ via~\eqref{eq-thekeybound0}. Then, simplifying the result, we draw the following conclusions:

\vspace*{0.2in}
\noindent\emph{Case 1: $2 \nmid f_0$}. When $2 \nmid f_0$, we have:
\begin{align}
& \delta - \mu_{f_0} \ll
 2^n \cdot 2^{-2n} \cdot  \prod_{p \nmid 2f_0}\min\left\{\frac{p^{2n^2+n}}{\#G(\BZ/p\BZ)},\sum_{m = 1}^{2n+1}  \frac{p+1}{2^{m-1}} \cdot \frac{\#\mc{I}_p(m)}{p^{2n+1}}\right\} \label{eq-46when2good} 
\end{align}
where 
the mass at $\infty$ contributes $O(2^n)$, the mass at $2$ contributes $O(2^{-2n})$, and the masses at primes dividing $f_0$ contribute $O(1)$.

We now bound the product of the masses at primes not dividing $2f_0$, proving that they contribute $O(2^{-\varepsilon_1 n^{\varepsilon_2}})$. The following lemma allows us to bound the factors at primes less than a small power of the degree $2n+1$:
\begin{lemma} \label{lem-poddbounds}
We have for each fixed $A \in (0,1/3) \subset \BR$ that
$$\prod_{ 3 \leq p \leq (2n+1)^A}\sum_{m = 1}^{2n+1}  \frac{p+1}{2^{m-1}} \cdot \frac{\#\mc{I}_p(m)}{p^{2n+1}} \ll 2^{-\varepsilon_1 n^{\varepsilon_2}},$$
 where $\varepsilon_1, \varepsilon_2 > 0$ are real numbers that may depend on $A$.
\end{lemma}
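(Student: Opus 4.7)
The plan is to rewrite the inner sum as $S_p = 2(p+1)\,\mathbb{E}_f[2^{-\omega(f)}]$, where $\omega(f)$ denotes the number of distinct irreducible factors of a random monic polynomial $f$ of degree $N := 2n+1$ over $\BF_p$; then to prove the uniform bound $\mathbb{E}[2^{-\omega(f)}] = O(N^{-1/2})$; and finally to combine this with the Prime Number Theorem. The first reduction is immediate from $\sum_m \#\mc{I}_p(m) \cdot 2^{-m} = \sum_f 2^{-\omega(f)}$.

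For the key step, I would begin from the Euler product identity $\sum_f 2^{-\omega(f)} t^{\deg f} = \prod_g (1-t^{\deg g}/2)/\prod_g(1-t^{\deg g})$ (over monic irreducibles $g \in \BF_p[x]$) and the zeta-function identity $\prod_g(1-t^{\deg g}) = 1-pt$, and factor
\[ \prod_g (1-t^{\deg g}/2) \,=\, (1-pt)^{1/2}\,B(t), \qquad B(t) := \prod_g B_{\deg g}(t), \quad B_d(t) := \frac{1-t^d/2}{(1-t^d)^{1/2}}. \]
A direct Taylor expansion gives $\log B_d(t) = t^{2d}/8 + O(t^{3d})$; combined with the prime polynomial theorem bound $N_d \leq p^d/d$, this shows that $\sum_g \log B_{\deg g}(t)$ converges absolutely on $|t| < p^{-1/2}$, so that $B$ is analytic there with Cauchy estimates $|[t^j]B(t)| = O(p^{j/2})$ uniformly in $p$. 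Setting $a_k := [t^k]\prod_g(1-t^{\deg g}/2)$, one has $\mathbb{E}[2^{-\omega(f)}] = \sum_{k=0}^N a_k p^{-k}$. Since $(1-pt)^{1/2}\,B(t)$ vanishes at $t = 1/p$ while $B(1/p)$ is finite, Abel's theorem gives $\sum_{k=0}^\infty a_k p^{-k} = 0$, hence $\mathbb{E}[2^{-\omega(f)}] = -\sum_{k > N} a_k p^{-k}$. Convolving the coefficient asymptotic $[t^k](1-pt)^{1/2} \sim -p^k/(2k^{3/2}\sqrt{\pi})$ (from Stirling applied to $\binom{1/2}{k}$) with the Cauchy estimates on the coefficients of $B$ then yields $|a_k p^{-k}| \leq C/k^{3/2}$ uniformly in $p$, whence $\mathbb{E}[2^{-\omega(f)}] \leq 2C/\sqrt{N}$ and $S_p \leq 4C(p+1)/\sqrt{N}$.

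Finally, setting $X := (2n+1)^A$ and taking logarithms of the product, the Prime Number Theorem gives
\[ \log \prod_{3 \leq p \leq X} S_p \,\leq\, \theta(X) - \tfrac{1}{2}\pi(X)\log N + O(\pi(X)) \,=\, \bigl(1 - \tfrac{1}{2A} + o(1)\bigr)\,X \]
as $n \to \infty$, using $\theta(X) = X(1+o(1))$ and $\pi(X)\log N = (X/A)(1+o(1))$. For any fixed $A < 1/2$, and in particular for $A \in (0, 1/3)$, the coefficient $1 - 1/(2A)$ is a negative constant, so $\log \prod_p S_p \leq -c(A) \cdot n^A$ for some $c(A) > 0$ and all sufficiently large $n$; this yields $\prod_p S_p \ll 2^{-\varepsilon_1 n^{\varepsilon_2}}$ with $\varepsilon_2 = A$ and $\varepsilon_1 = c(A)/\log 2$. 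The most delicate point will be ensuring the uniformity in $p$ of the tail estimate $|a_k p^{-k}| = O(k^{-3/2})$; this is essentially a uniform-in-$p$ Selberg-Delange / singularity-analysis estimate, and it rests on the elementary bound $\sum_d N_d r^{2d} = O(1)$ at $r = 1/p$ controlling $|B(t)|$ on a disk of radius slightly exceeding $1/p$.
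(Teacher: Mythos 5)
Your proof is correct and takes a genuinely different route from the paper's, although both hinge on the same underlying function-field analogue of the Sathe--Selberg/Selberg--Delange machinery. The paper estimates the inner sum by splitting at $m = \tfrac{1}{2}\log(2n+1)$, bounding the tail trivially, and invoking the Afshar--Porritt theorem for small $m$; it also uses Erd\H{o}s' elementary bound $\prod_{p<X}p < 4^X$, which is why it gets the condition $A < 1/\log 16$ rather than your $A < 1/2$ (both suffice for the stated $A<1/3$). You instead reduce cleanly to $S_p = 2(p+1)\,\mathbb{E}_f[2^{-\omega(f)}]$ and extract the $O(N^{-1/2})$ bound directly from the generating function $(1-pt)^{-1/2}B(t)$, which amounts to proving the needed instance of Afshar--Porritt from scratch; this avoids the artificial $m$-split and is arguably cleaner. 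One caveat: the claim ``$B$ is analytic on $|t|<p^{-1/2}$ with Cauchy estimates $|[t^j]B|=O(p^{j/2})$ uniformly'' is not literally correct --- $B$ is analytic but unbounded near $|t|=p^{-1/2}$ (it has a $(1-pt^2)^{-1/8}$-type singularity), so analyticity alone does not yield boundary Cauchy estimates. What one actually gets from the bound $\sum_d N_d r^{2d} \leq -\log(1-pr^2)=O(1)$ for $pr^2$ bounded away from $1$ is, e.g., $|[t^j]B| = O((2p)^{j/2})$, which is weaker but still exponentially better than the $p^j$ growth of the $(1-pt)^{1/2}$ coefficients, and this is enough for your convolution to give $|a_k p^{-k}| = O(k^{-3/2})$ uniformly for $p\geq 3$. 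Your closing sentence correctly identifies this as the delicate point and sketches the right fix, so the approach is sound; just tighten that one intermediate statement when writing it out.
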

\begin{proof}
We first estimate each factor in the product over primes. We split the sum at the prime $p$ into two ranges, one for $m \leq \frac{1}{2}\log(2n+1)$ and one for $m > \frac{1}{2}\log(2n+1)$. We bound the sum over $m > \frac{1}{2}\log(2n+1)$ as follows:
\begin{align}
   \sum_{\frac{1}{2}\log(2n+1) < m \leq 2n+1}  \frac{p+1}{2^{m-1}} \cdot \frac{\#\mc{I}_p(m)}{p^{2n+1}} & \leq \sum_{\frac{1}{2}\log(2n+1) < m \leq 2n+1}  \frac{p+1}{2^{m-1}}  \leq \frac{2(p+1)}{\sqrt{2n+1}}.\label{eq-splitsum1}
\end{align}
For the sum over $m < \frac{1}{2}\log(2n+1)$, we rely on the following result of Afshar and Porritt:\footnote{Note that a result of this type was first proven by Car in~\cite{MR651431}; the dependence of the error term on the prime $p$ was made explicit in~\cite{MR3897499}.}
\begin{theorem}[\protect{\cite[Remark~2.11]{MR3897499}}] \label{thm-afsharporritt}
 Let $p$ be a prime, and let $1 \leq m \leq \log (2n+1)$. Then we have the following uniform estimate:
 $$\frac{\#\mc{I}_p(m)}{p^{2n+1}} = \frac{1}{(2n+1)} \cdot \frac{(\log (2n+1))^{m-1}}{(m-1)!} \cdot \left(D_p\left(\frac{m-1}{\log(2n+1)}\right)+O\left(\frac{m}{(\log (2n+1))^2}\right)\right)$$
 where the implied constant is absolute $($i.e., does not depend on $p${}$)$, and where the function $D_p$ is defined as follows. Letting $\mc{I} \subset (\mathbb{Z}/p\mathbb{Z})[x]$ be the set of all monic irreducible polynomials, we have
 $$D_p(z) = \frac{E(1/p,z)}{\Gamma(1+z)}, \quad \text{where} \quad E(x,z) = \prod_{f \in \mc{I}} \left(1 + \frac{zx^{\deg f}}{1-x^{\deg f}}\right)\cdot (1 - x^{\deg f})^z.$$
\end{theorem}
We now turn the very careful estimate in Theorem~\ref{thm-afsharporritt} into a more easily usable form. First, notice that for any $x \in (0,1/2) \subset \BR$ and $z \in (0,1)$ we have
\begin{equation} \label{eq-mathest1}
\left(1 + \frac{zx}{1-x}\right)\cdot (1 - x)^z \leq \left(1 + \frac{zx}{1-x}\right)\cdot (1 - zx) = 1 + \frac{(z-z^2)x^2}{1-x} \leq 1 + x^2.
\end{equation}
By Carlitz's Theorem (see~\cite{MR1506871}), there are exactly $p^{d-1}$ monic separable polynomials of degree $d$ over $\BZ/p\BZ$, and hence at most $p^{d-1}$ monic irreducible polynomials of degree $d$ over $\BZ/p\BZ$. Using this together with~\eqref{eq-mathest1}, we deduce that
\begin{equation} \label{eq-mathest2}
E(1/p,z) \leq \prod_{f \in \mc{I}} (1 + p^{-2 \deg f}) \leq \prod_{d = 1}^\infty (1 + p^{-2 d})^{p^{d-1}}.
\end{equation}
To estimate the right-hand side of~\eqref{eq-mathest2}, we take its logarithm and apply the bound $\log(1+x) \leq x$ (which holds for $x > -1$):
\begin{equation} \label{eq-mathest3}
\log\left(\prod_{d = 1}^\infty (1+p^{-2d})^{p^{d-1}}\right) = \sum_{d = 1}^\infty p^{d-1} \cdot \log(1+p^{-2d}) \leq \sum_{d = 1}^\infty p^{d-1} \cdot p^{-2d} = \frac{1}{p(p-1)}.
\end{equation}
Taking $p = 2$ in~\eqref{eq-mathest3} and using the fact that $\Gamma(1+z) > 1/2$, we have for any prime $p$, any $1 \leq m \leq \log(2n+1)$, and all sufficiently large $n$ that
\begin{equation} \label{eq-mathest4}
D_p\left(\frac{m-1}{\log(2n+1)}\right) + O\left(\frac{m}{(\log (2n+1))^2}\right) \leq 2\sqrt{e} +O\left(\frac{m}{(\log (2n+1))^2}\right) \leq 4,
\end{equation}
where $e$ denotes the usual base of the natural logarithm. Upon combining~\eqref{eq-splitsum1},~\eqref{eq-mathest4}, and Theorem~\ref{thm-afsharporritt}, we find for all sufficiently large $n$ that
\begin{align}
\sum_{m = 1}^{2n+1} \frac{p+1}{2^{m-1}} \cdot \frac{\#\mc{I}_p(m)}{p^{2n+1}} & \leq \frac{2(p+1)}{\sqrt{2n+1}} + \sum_{1 \leq m  \leq \frac{1}{2}\log(2n+1)} \frac{4(p+1)}{(2n+1)} \cdot \frac{(\frac{1}{2}\log (2n+1))^{m-1}}{(m-1)!} \label{eq-mathest5} \\
& \leq \frac{2(p+1)}{\sqrt{2n+1}} +\frac{4(p+1)}{2n+1} \cdot e^{\frac{1}{2}\log(2n+1)} = \frac{6p+6}{\sqrt{2n+1}} \leq \frac{8p}{\sqrt{2n+1}}, \nonumber
\end{align}
where we have used the fact that $\sum_{i = 0}^N \frac{x^i}{i!} \leq e^x$ for any $x > 0$ and integer $N \geq 0$. By Erd\H{o}s' proof of Bertrand's Postulate, we have $\prod_{p < N} < 4^N$ for any $N \geq 3$. By the Prime Number Theorem, for any $\varepsilon > 0$, there exists an integer $N > 0$ such that for all $N' > N$, the number of primes less than or equal to $N'$ is at least $(1-\varepsilon) \cdot \frac{N'}{\log N'}$ and is at most $(1+\varepsilon) \cdot \frac{N'}{\log N'}$. Combining this fact with~\eqref{eq-mathest5}, we find for any fixed $A < 1/3$ and all sufficiently large $n$ (where ``large'' depends on $A$ and $\varepsilon$) that
$$\prod_{ 3 \leq p \leq (2n+1)^A}\sum_{m = 1}^{2n+1}  \frac{p+1}{2^{m-1}} \cdot \frac{\#\mc{I}_p(m)}{p^{2n+1}} \leq \frac{8^{(1+\varepsilon) \cdot \frac{(2n+1)^A}{A \log (2n+1)}} \cdot 4^{(2n+1)^A}}{(\sqrt{2n+1})^{(1-\varepsilon) \cdot \frac{(2n+1)^A}{A  \log(2n+1)}}} \ll 2^{-\varepsilon_1 n^{\varepsilon_2}},$$
for some real numbers $\varepsilon_2, \varepsilon_2 > 0$.
\end{proof}
The following lemma allows us to bound the factors at the remaining primes:
\begin{lemma} \label{lem-volbound}
We have that $$\prod_{p > 2} \frac{p^{2n^2+n}}{\#G(\BZ/p\BZ)} \ll 1,$$ where the implied constant does not depend on $n$.
\end{lemma}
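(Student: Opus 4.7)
The plan is to compute $\on{Vol}(G(\BZ_p))$ explicitly for odd $p$ and then show the resulting product converges by swapping the order of the double product.

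Since $G$ is smooth over $\BZ_p$ for odd $p$, the reduction map $G(\BZ_p) \to G(\BZ/p\BZ)$ is surjective, so as noted in~\eqref{eq-firstinvol} we have
\[
\on{Vol}(G(\BZ_p)) \;=\; \frac{\#G(\BZ/p\BZ)}{p^{\dim G}} \;=\; \frac{\#G(\BZ/p\BZ)}{p^{2n^2+n}}.
\]
Substituting the formula $\#G(\BZ/p\BZ) = p^{n^2}\prod_{i=1}^n(p^{2i}-1)$ from~\eqref{eq-gsize} and using $\prod_{i=1}^n(p^{2i}-1) = p^{n(n+1)}\prod_{i=1}^n(1-p^{-2i})$, I obtain the clean identity
\[
\on{Vol}(G(\BZ_p)) \;=\; \prod_{i=1}^n \bigl(1-p^{-2i}\bigr).
\]
In particular each factor is at most $1$, so I may enlarge the product to an infinite one: $\on{Vol}(G(\BZ_p))^{-1} \le \prod_{i=1}^{\infty}(1-p^{-2i})^{-1}$.

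Next I would swap the order of the double product: since every term $(1-p^{-2i})^{-1}$ is positive and the relevant tails are absolutely summable (as shown below), Fubini/Tonelli gives
\[
\prod_{p > 2} \on{Vol}(G(\BZ_p))^{-1} \;\le\; \prod_{i=1}^{\infty} \prod_{p > 2} \bigl(1 - p^{-2i}\bigr)^{-1} \;=\; \prod_{i=1}^{\infty} \bigl(1 - 2^{-2i}\bigr)\,\zeta(2i),
\]
where I used the Euler product for $\zeta(2i)$. To see convergence, take logs and use $-\log(1-x) \le \tfrac{9}{8}x$ for $0 < x \le 1/9$: for every $i \ge 1$ and odd prime $p$,
\[
-\log\bigl(1-p^{-2i}\bigr) \;\le\; \tfrac{9}{8}\,p^{-2i},
\]
so the total log is bounded by $\tfrac{9}{8}\sum_{p>2}\sum_{i\ge 1} p^{-2i} = \tfrac{9}{8}\sum_{p>2}\frac{1}{p^2-1} < \infty$. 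This bound is independent of $n$, which is exactly what the lemma claims.

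There is no real obstacle here beyond bookkeeping; the only thing to check carefully is the exponent arithmetic that yields $\on{Vol}(G(\BZ_p)) = \prod_{i=1}^n(1-p^{-2i})$, together with the elementary estimate needed to sum $-\log(1-p^{-2i})$ over odd primes and all $i\ge 1$. Once those are in hand, the infinite double product is absolutely convergent and bounded by an absolute constant, completing the proof.
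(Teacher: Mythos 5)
Your proof is correct and follows essentially the same approach as the paper's: both compute $\on{Vol}(G(\BZ_p)) = \prod_{i=1}^n(1-p^{-2i})$ from~\eqref{eq-firstinvol} and~\eqref{eq-gsize}, and both establish convergence by bounding $\log \on{Vol}(G(\BZ_p))^{-1}$ by a constant times $\frac{1}{p^2-1}$ and summing over odd primes. The Fubini swap to the $\zeta(2i)$ product is a cosmetic detour that you do not actually use in the final estimate; the paper instead packages its constant via the comparison $\frac{2}{p^2-1} \le \log(1+p^{-3/2})$ for $p \ge 5$, yielding $\zeta(3/2)/\zeta(3)$, but the underlying convergence argument is the same.
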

\begin{proof}
From~\eqref{eq-gsize}, it follows that
\begin{equation} \label{eq-oddvols1}
\log \frac{p^{2n^2+n}}{\#G(\BZ/p\BZ)} = \sum_{i = 1}^n -\log(1-p^{-2i}) \leq  2 \cdot \sum_{i = 1}^n p^{-2i} \leq  \frac{2}{p^2-1},
\end{equation}
where we have applied the bound $-\log(1-x) \leq 2x$, which holds for $x \in (0,1/2) \subset \BR$. It is not hard to check (by comparing derivatives) that $\frac{2}{p^2-1} \leq \log(1 + p^{-\frac{3}{2}})$ for each $p \geq 5$. Thus,
\begin{align*}\prod_{p > 2}\frac{p^{2n^2+n}}{\#G(\BZ/p\BZ)}  & \leq \frac{e^{\frac{1}{4}}}{(1+2^{-\frac{3}{2}})(1 + 3^{-\frac{3}{2}})} \cdot \prod_{p} (1 + p^{-\frac{3}{2}}) \ll 1. \qedhere\end{align*}
\end{proof}

It follows from Lemmas~\ref{lem-poddbounds} and~\ref{lem-volbound} that the factor at the odd primes not dividing $f_0$ is $O(2^{-\varepsilon_1n^{\varepsilon_2}})$ for some real numbers $\varepsilon_1, \varepsilon_2 > 0$, so by~\eqref{eq-46when2good}, we have $\delta = \mu_{f_0} + o(2^{-n})$. This completes the proof of the second statement in part (a) of Theorem~\ref{thm-main}.

\medskip

 We now determine the smallest $n$ for which our method yields that a positive proportion of superelliptic stacky curves in $\mathscr{F}_{2n+1}(f_0)$ are insoluble. To this end, let $\mathscr{F}_{2n+1}^{\max}(f_0)$ be the set of forms $F \in \mathscr{F}_{2n+1}(f_0)$ such that $R_F \otimes_\BZ \BZ_p$ is the maximal order in $K_F \otimes_\BQ \BQ_p$ for every prime $p \mid f_0$ and such that condition (b) in Theorem~\ref{thm-caseprime} fails for some prime $p \mid \upkappa$. Let $\delta_{\max}$ be the upper density of forms $F \in \scr{F}_{2n+1}^{\max}(f_0)$, enumerated by height, such that $\scr{S}_F(\BZ) \neq \varnothing$. For each prime $p \mid f_0$, instead of applying Proposition~\ref{prop-qpbound}, we can apply Proposition~\ref{prop-zpbound}, which yields the following explicit bound on $\delta_{\max}$:
\begin{align*} 
\delta_{\max} & \leq  3 \cdot 2^n \cdot \left(\sum_{m = 1}^{2n+1}  \frac{12}{2^{2n+m-1}} \cdot \frac{\#G(\BZ/8\BZ)}{8^{2n^2+n}} \cdot \frac{\#\scr{I}_8(m)}{8^{2n+1}} \right) \cdot \prod_{p > 2}\frac{p^{2n^2+n}}{\#G(\BZ/p\BZ)}
\end{align*}
Upon applying the estimate
$\#\scr{I}_8(m)/8^{2n+1} \leq 1$, we deduce that $\delta_{\max} < 2^{7-n} \leq 1$ whenever $n \geq 7$. By explicitly computing $\frac{\#\scr{I}_8(m)}{8^{2n+1}}$ in {\tt sage} for $n < 7$, one can check that we also have $\delta_{\max} < 1$ when $n \in \{5, 6\}$. Since the density of $\mathscr{F}_{2n+1}^{\max}(f_0)$ in $\mathscr{F}_{2n+1}(f_0)$ is positive, this completes the proof of the first statement in part (a) of Theorem~\ref{thm-main}.

\vspace*{0.1in}
\noindent\emph{Case 2: $2 \mid f_0$}. When $2 \mid f_0$, we bound the mass at $\infty$ slightly differently --- specifically, we do not use the bound on $|\mc{J}| \cdot \on{Vol}(G(\BZ) \backslash G(\BR))$ given by Lemma~\ref{lem-compJ}. Doing so, and applying the identities~\eqref{eq-normmultident} and~\eqref{eq-tam}, we obtain:
\begin{align}
& \delta - \mu_{f_0} \ll 2^{-n} \cdot 2^n \cdot \prod_{\substack{p \nmid f_0}}\min\left\{\frac{p^{2n^2+n}}{\#G(\BZ/p\BZ)} ,\sum_{m = 1}^{2n+1}  \frac{p+1}{2^{m-1}} \cdot \frac{\#\mc{I}_p(m)}{p^{2n+1}}\right\} \label{eq-46when2bad}
\end{align}
where the implied constant is independent of $n$. In~\eqref{eq-46when2bad}, the mass at $\infty$ contributes $O(2^{-n})$ (as the Jacobian factors at all places have canceled out), the mass at $2$ contributes $O(2^{n})$, and the masses at primes dividing $f_0$ contribute $O(1)$. It follows from Lemmas~\ref{lem-poddbounds} and~\ref{lem-volbound} that the factor at the primes not dividing $f_0$ is $O(2^{-\varepsilon_1n^{\varepsilon_2}})$ for some real numbers $\varepsilon_1, \varepsilon_2 > 0$, so $\delta = \mu_{f_0} + O(2^{-\varepsilon_1n^{\varepsilon_2}})$ in this case. This completes the proof of part (b) of Theorem~\ref{thm-main}.


\section{Obstruction to the Hasse Principle and the Proof of Theorem~\ref{thm-hasse}} \label{sec-bm}

In this section, we study how often superelliptic stacky curves satisfy the Hasse principle using the method of descent. Specifically, we describe $2$-coverings of superelliptic stacky curves, and we modify the proof of Theorem~\ref{thm-main} to show that superelliptic stacky curves often have no locally soluble $2$-coverings and thus fail the Hasse principle. Upon showing that such a $2$-covering obstruction to solubility is a special case of the Brauer--Manin obstruction, we obtain Theorem~\ref{thm-hasse}.

\subsection{Defining the Brauer--Manin Obstruction for $\mathscr{S}_F$} \label{sec-defbm}

Let $F$ be a separable integral binary form of degree $2n+1 \geq 3$. While a suitable theory of Brauer--Manin obstruction for stacks remains to be developed, such a theory is not required in the context of superelliptic stacky curves. Indeed, we can exploit the fact that the stack $\mathscr{S}_F$ is defined as a quotient of the punctured affine surface $\wt{S}_F$ (see~\eqref{eq-surfacedef}), where the usual theory of the Brauer--Manin obstruction for varieties applies.

Let $\wt{S}_{F,\BQ} = \wt{S}_F \otimes_{\BZ} \BQ$, and let $\wt{S}_{F,\BQ}(\BA_\BQ)^{\on{Br}}$ denote the usual Brauer--Manin set of the $\BQ$-variety $\wt{S}_{F,\BQ}$ (see~\cite[\S1]{BMadded} for the definition). Let
\begin{equation} \label{eq-needlabel1}
\wt{S}_F(\BA_\BQ)^{\on{int}} \defeq \prod_{v} \wt{S}_F(\BZ_v)
\end{equation}
be the set of adelic points of $\wt{S}_F$ that are integral in every place (i.e., the set of everywhere-primitive adelic solutions to the superelliptic equation $y^2 = F(x,z)$, or equivalently, the set of everywhere-integral adelic points on the stacky curve $\mathscr{S}_F$). With this notation in place, we define the \emph{Brauer--Manin set} of $\mathscr{S}_F$ by
\begin{equation} \label{eq-brdef}
\mathscr{S}_F(\BA_\BQ)^{\on{Br}} \defeq \wt{S}_F(\BA_\BQ)^{\on{int}} \cap \wt{S}_{F,\BQ}(\BA_\BQ)^{\on{Br}},
\end{equation}
and we say that $\mathscr{S}_F$ \emph{has a Brauer--Manin obstruction to having an integral point} if its Brauer--Manin set is empty (i.e.,~if $\mathscr{S}_F(\BA_\BQ)^{\on{Br}} = \varnothing$).

Because we have defined $\mathscr{S}_F(\BA_\BQ)^{\on{Br}}$ as a subset of $\wt{S}_{F,\BQ}(\BA_\BQ)^{\on{Br}}$, we can use the method of descent as laid out for open varieties in~\cite{BMadded} to obtain a set-theoretic ``upper bound'' on $\mathscr{S}_F(\BA_\BQ)^{\on{Br}}$. Indeed, suppose that we have a torsor $f \colon Z \to \wt{S}_{F,\BQ}$ by a group of multiplicative type or connected algebraic group $\Gamma$ (all defined over $\BQ$). Let
\begin{equation} \label{eq-needlabel2}
\wt{S}_{F,\BQ}(\BA_\BQ)^{f}\defeq \bigcup_{\sigma \in H^1(G_\BQ, \Gamma(\ol{\BQ}))} f^\sigma(Z^\sigma(\BA_\BQ))
\end{equation}
be the descent set of $f$, where $f^\sigma \colon Z^\sigma \to \wt{S}_{F,\BQ}$ is the twist of $f \colon Z \to \wt{S}_{F,\BQ}$ by a $1$-cocycle representing $\sigma \in H^1(G_\BQ, \Gamma(\ol{\BQ}))$. Since $\wt{S}_{F,\BQ}$ is smooth and geometrically integral,~\cite[Theorem~1.1]{BMadded} implies that
\begin{equation} \label{eq-brcont}
\wt{S}_{F,\BQ}(\BA_\BQ)^{\on{Br}} \subset \wt{S}_{F,\BQ}(\BA_\BQ)^f.
\end{equation}
It follows from~\eqref{eq-brdef} and~\eqref{eq-brcont} that $\mathscr{S}_F$ has a Brauer--Manin obstruction to having an integral point if it has an obstruction to descent by $\Gamma$-torsors in the sense that $\wt{S}_F(\BA_\BQ)^{\on{int}} \cap \wt{S}_{F,\BQ}(\BA_\BQ)^f = \varnothing$.

\subsection{$2$-Coverings} \label{sec-2covering}

Let $k$ be a field of characteristic not equal to $2$, let $f_0 \in k^\times$, and let $F \in \mathscr{F}_{2n+1}(f_0,k)$ be separable. Let $\theta_1, \dots, \theta_{2n+1} \in k_{\on{sep}}$ be the distinct roots of the polynomial $F(x,1)$, and for each $\sigma \in G_k$ and $i \in \{1, \dots, 2n+1\}$, let $\sigma(i) \in \{1, \dots, 2n+1\}$ be defined by $\theta_{\sigma(i)} = \sigma(\theta_i)$. Let $\wt{S}_F$ and $\mathscr{S}_F$ respectively denote the corresponding punctured affine surface and stacky curve (which are now defined over $k$, as opposed to $\BZ$). In this section, we specialize the discussion of descent in \S\ref{sec-defbm} to the case where $\Gamma$ is equal to the Jacobian $\on{Pic}^0(\mathscr{S}_F)$ of $\mathscr{S}_F$. Notice that $\on{Pic}^0(\mathscr{S}_F)$ is a group of multiplicative type, because over $k_{\on{sep}}$, it is isomorphic to a finite product of copies of $\BZ/2\BZ$ and is thus diagonalizable. Consequently, we can apply~\eqref{eq-brcont} to $\on{Pic}^0(\mathscr{S}_F)$-torsors.

A Galois \'{e}tale covering defined over $k$ with Galois group isomorphic to $\on{Pic}^0(\mathscr{S}_F)$ as $G_k$-modules is called a $2$\emph{-covering}. Our strategy for describing the $2$-coverings of $\wt{S}_{F}$ is to describe the $2$-coverings of $\scr{S}_F$ and pull back along the quotient map $\wt{S}_{F} \to [\wt{S}_F/\mathbb{G}_m] = \scr{S}_{F}$. We now briefly recall the results of~\cite[\S5]{MR2156713}, in which Bruin and Flynn explicitly construct all $2$-coverings of $\scr{S}_{F}$ up to isomorphism over $k$ (see also~\cite[\S3]{MR1916903}). Consider the projective space $\BP_k^{2n}$ with homogeneous coordinates $[Z_1 : \cdots : Z_{2n+1}]$ such that $G_k$ acts on $Z_i$ by $\sigma \cdot Z_i = Z_{\sigma(i)}$. The \emph{distinguished covering} is defined to be the closed subscheme $C^1 \subset \BP_k^{2n}$ cut out by the homogeneous ideal
$$I^1 \defeq \big((\theta_i - \theta_j)(Z_\ell^2 - Z_m^2) - (\theta_\ell - \theta_m)(Z_i^2 - Z_j^2) : i,j,\ell,m \in \{1,\dots,2n+1\}\big).$$
Let $\pi^1 \colon C^1 \to \BP_k^1$ be the map defined by
$$\pi^1([Z_1 : \cdots : Z_{2n+1}]) = \frac{\theta_j Z_i^2 - \theta_i Z_j^2}{Z_i^2 - Z_j^2}$$
for any distinct $i,j \in \{1, \dots, 2n+1\}$.  For each class $\delta \in H^1(G_k, \on{Pic}^0(\mathscr{S}_F))$, the twist of \mbox{$\pi^1 \colon C^1 \to \BP_k^1$} by $\delta$ is given explicitly as follows. By~\cite[last paragraphs of \S4.2 and \S5.1]{MR3156850}, $H^1(G_\BQ, \on{Pic}^0(\mathscr{S}_F))$ can be identified with $(K_F^\times/K_F^{\times 2})_{\on{N}\equiv 1}$, so we may think of $\delta$ as being an element of $(K_F^\times/K_F^{\times 2})_{\on{N}\equiv 1}$. Let $(\delta_1, \dots, \delta_{2n+1})$ be the image of $\delta$ under the map $K_F^\times/K_F^{\times 2} \to \big((k_{\on{sep}})^\times /(k_{\on{sep}})^{\times 2}\big)^{2n+1}$ given by taking the product of the distinct embeddings of each field component of $K_F$ into $k_{\on{sep}}$. Then the twist of $C^1$ by $\delta$ is the closed subscheme $C^\delta \subset \BP_k^{2n}$ cut out by the homogeneous ideal
$$I^\delta \defeq \big((\theta_i - \theta_j)(\delta_\ell Z_\ell^2 - \delta_m Z_m^2) - (\theta_\ell -\theta_m)(\delta_i Z_i^2 - \delta_j Z_j^2) : i,j,\ell,m \in \{1, \dots, 2n+1\}\big),$$
and the twist of $\pi^1 \colon C^1 \to \BP_k^1$ is the map $\pi^\delta \colon C^\delta \to \BP_k^1$ defined by
$$\pi^\delta([Z_1 : \cdots : Z_{2n+1}]) = \frac{\theta_j \delta_i Z_i^2 - \theta_i \delta_j Z_j^2}{\delta_iZ_i^2 - \delta_jZ_j^2}$$
for any distinct $i,j \in \{1, \dots, 2n+1\}$. The following theorem enumerates the basic properties of the maps $\pi^\delta \colon C^\delta \to \BP_k^1$:
\begin{theorem}[\protect{\cite[Lemma~5.10]{MR2156713} and~\cite[\S3.1]{MR1916903}}] \label{thm-covprops}
With notation as above, we have:
\begin{enumerate}[leftmargin=2em]
 \item[$\mathrm{(a)}$] For each $\delta \in (K_F^\times/K_F^{\times 2})_{\on{N}\equiv 1}$, the scheme $C_\delta$ is a smooth, geometrically irreducible, projective curve of genus $n \cdot 2^{2n-1} - 3 \cdot 2^{2n-2} + 1$;
 \item[$\mathrm{(b)}$] The action of $G_k$ leaves both $I^\delta$ and $\pi^\delta$ invariant, so the map $\pi^\delta \colon C^\delta \to \BP_k^1$ is defined over $k$;
 \item[$\mathrm{(c)}$] The map $\pi^\delta$ is a covering map with Galois group generated by the automorphisms $\tau_i \colon C^\delta \to C^\delta$ defined by $Z_i \to -Z_i$ for each $i \in \{1, \dots, 2n+1\}$; and
 \item[$\mathrm{(d)}$] We have that $\deg \pi^\delta = 2^{2n}$, that $\# (\pi^{\delta})^{-1}(\theta_i) = 2^{2n-1}$, and that $\#(\pi^\delta)^{-1}(\alpha) = 2^{2n}$ for each $\alpha \in \BP_k^1(k_{\on{sep}}) \smallsetminus \{\theta_1, \dots, \theta_{2n+1}\}$.
 \end{enumerate}
 \end{theorem}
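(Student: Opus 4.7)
The plan is to verify each of the four properties in turn, leaning on the cited references~\cite{MR2156713,MR1916903} where analogous constructions are worked out in detail, and filling in only the points that need to be adapted to our setting. Properties (2)--(4) admit essentially mechanical verifications from the explicit generators of $I^\delta$ and the explicit formula for $\pi^\delta$, while property (1) will follow from properties (3) and (4) together with Riemann-Hurwitz once smoothness and geometric irreducibility are established.

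For property (2), I would observe that each generator of $I^\delta$, as well as the numerator and denominator of $\pi^\delta$, is a polynomial in the variables $Z_i$, $\theta_i$, $\delta_i$ whose indices are permuted \emph{simultaneously} by any $\sigma \in G_k$ (recall $\sigma(Z_i) = Z_{\sigma(i)}$, $\sigma(\theta_i) = \theta_{\sigma(i)}$, and by our description of $H^1(G_k,\on{Pic}^0(\scr{S}_F))$ in terms of $(K_F^\times/K_F^{\times 2})_{\on{N}\equiv 1}$, also $\sigma(\delta_i) = \delta_{\sigma(i)}$). Thus $\sigma$ sends a generator of $I^\delta$ to another generator and leaves the rational function $\pi^\delta$ fixed. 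For property (3), the decisive observation is that every generator of $I^\delta$ depends on the $Z_i$ only through their squares; hence each involution $\tau_i$ preserves $I^\delta$ and fixes the function $\pi^\delta$, so the $\tau_i$ all descend to deck transformations of $\pi^\delta$. The $\tau_i$ commute and have order $2$, so they generate an elementary abelian $2$-group, and one checks that the kernel of the action on the cover is the subgroup $\langle \tau_1 \cdots \tau_{2n+1}\rangle$, giving a deck group of order $2^{2n}$.

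For property (4), I would compute fibers of $\pi^\delta$ directly. For $\alpha \in \BP_k^1(k_{\on{sep}}) \setminus\{\theta_1,\dots,\theta_{2n+1}\}$, the equations $\theta_j \delta_i Z_i^2 - \theta_i \delta_j Z_j^2 = \alpha(\delta_i Z_i^2 - \delta_j Z_j^2)$ pin down each ratio $Z_i^2/Z_j^2$, so a point of $\pi^{\delta,-1}(\alpha)$ is determined by a choice of sign $Z_i \mapsto \pm Z_i$ for each $i$, modulo the scaling $Z_i \mapsto -Z_i$ applied to all $i$ at once; this yields $2^{2n+1}/2 = 2^{2n}$ geometric points. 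Over $\theta_i$, the same equations force $\delta_i Z_i^2 = 0$, hence $Z_i = 0$, which cuts the fiber exactly in half to $2^{2n-1}$ geometric points, each with ramification index $2$. This gives both the degree and the ramification data.

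Finally, property (1): smoothness and geometric irreducibility of $C^\delta$ are proven in~\cite[Lemma~5.10]{MR2156713} for $C^1$, and these properties are preserved under the twist by $\delta$ since twisting becomes an isomorphism over $k_{\on{sep}}$. With these in hand, Riemann-Hurwitz applied to $\pi^\delta$ yields
\[
2g - 2 \,=\, 2^{2n}\cdot(-2) + (2n+1)\cdot 2^{2n-1},
\]
which rearranges to $g = n \cdot 2^{2n-1} - 3 \cdot 2^{2n-2} + 1$. The one genuine obstacle is smoothness/geometric irreducibility of $C^\delta$, which is why I would quote the Bruin--Flynn result rather than reprove it; everything else is bookkeeping together with a standard Riemann-Hurwitz computation.
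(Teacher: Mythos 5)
Your sketch is correct, and it is worth noting that the paper itself offers no proof of this theorem — it simply cites~\cite[Lemma~5.10]{MR2156713} and~\cite[\S~3.1]{MR1916903}. So there is no ``paper's own proof'' to compare against; your argument is a reconstruction of the standard verification that those references carry out. Properties (2) and (3) do indeed follow immediately from the observation that the generators of $I^\delta$ and the formula for $\pi^\delta$ are symmetric in the triples $(Z_i,\theta_i,\delta_i)$ under simultaneous index permutation and depend only on the $Z_i^2$, and your identification of the relation $\tau_1\cdots\tau_{2n+1}=\mathrm{id}$ on $\BP^{2n}$ correctly yields the group of order $2^{2n}$. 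Your fiber count in (4) is right: the constraint $(\theta_j-\alpha)\delta_iZ_i^2 = (\theta_i-\alpha)\delta_jZ_j^2$ fixes all the squares up to scaling, giving $2^{2n}$ sign choices modulo global $\pm 1$ away from the branch locus, and forcing $Z_i=0$ over $\theta_i$ drops the count to $2^{2n-1}$. The Riemann--Hurwitz arithmetic $2g-2 = -2\cdot 2^{2n} + (2n+1)\cdot 2^{2n-1}$ does simplify to $g = n\cdot 2^{2n-1} - 3\cdot 2^{2n-2} + 1$ (tame ramification is automatic since $\operatorname{char} k \neq 2$, which you could mention explicitly), and deferring smoothness and geometric irreducibility of $C^1$ to Bruin--Flynn and then twisting over $k_{\on{sep}}$ is exactly the right move, since those are the only non-formal inputs.
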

We now demonstrate that the map $\pi^\delta$ can actually be thought of as a $2$-covering $\pi^\delta \colon C^\delta \to \scr{S}_F$.
\begin{proposition}
For any $\delta \in (K_F^\times/K_F^{\times 2})_{\on{N}\equiv 1}$, the map $\pi^\delta$ factors through the coarse moduli map $\scr{S}_F \to \BP_k^1$ to give a $2$-covering $\pi^\delta \colon C^\delta \to \scr{S}_F$.
\end{proposition}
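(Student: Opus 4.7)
The plan is to realize $\mathscr{S}_F$ as a root stack and invoke its universal property to produce the factorization. Recall that $\mathscr{S}_F$ has coarse moduli space $\mathbb{P}_k^1$ and a single $\mu_2$-stabilizer at each of the points $\theta_1,\dots,\theta_{2n+1}$; equivalently, it is the $2^{\mathrm{nd}}$-root stack $\sqrt[2]{(D,\mathbb{P}_k^1)}$ along the divisor $D = \theta_1+\cdots+\theta_{2n+1}$. By the universal property of root stacks (\'{e}tale-locally modeled on $[\mathbb{A}^1/\mu_2]$), a lift of $\pi^\delta \colon C^\delta \to \mathbb{P}_k^1$ to a map $C^\delta \to \mathscr{S}_F$ amounts to a line bundle $L$ on $C^\delta$ together with a section $s$ of $L$ and an isomorphism $L^{\otimes 2} \simeq (\pi^\delta)^* \mathcal{O}(D)$ identifying $s^2$ with $(\pi^\delta)^* s_D$, where $s_D$ is the tautological section of $\mathcal{O}(D)$.

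So the first step is to show that the divisor $(\pi^\delta)^*D$ on $C^\delta$ is divisible by $2$. This is immediate from part (4) of Theorem~\ref{thm-covprops}: since $\deg \pi^\delta = 2^{2n}$ and $\#(\pi^\delta)^{-1}(\theta_i) = 2^{2n-1}$, every point above each $\theta_i$ has ramification index exactly $2$, so $(\pi^\delta)^*(\theta_i) = 2 E_i$ for the reduced effective divisor $E_i$ supported on $(\pi^\delta)^{-1}(\theta_i)$. Setting $E = \sum_i E_i$, we obtain $(\pi^\delta)^*D = 2E$, and taking $L = \mathcal{O}(E)$ with its canonical section $s$ yields the desired factorization $\pi^\delta \colon C^\delta \to \mathscr{S}_F$. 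The construction is visibly $G_k$-equivariant (by part (2) of Theorem~\ref{thm-covprops}), hence defined over $k$.

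Next I would check that this lift is \'{e}tale. Over $\mathbb{P}_k^1 \setminus D$, the map $\mathscr{S}_F \to \mathbb{P}_k^1$ is an isomorphism and $\pi^\delta$ is \'{e}tale by the unramifiedness half of part (4), so the lift is \'{e}tale there. At a preimage $p$ of some $\theta_i$, both the stacky chart $[\mathbb{A}^1_u/\mu_2] \to \mathbb{A}^1_t$ (with $t = u^2$) and the map $\pi^\delta$ near $p$ (with local parameter $v$ satisfying $t \equiv v^2$ up to a unit) are degree-$2$ and totally ramified, so the induced map on \'{e}tale charts is an isomorphism up to a unit square, hence \'{e}tale. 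Thus $\pi^\delta \colon C^\delta \to \mathscr{S}_F$ is a finite \'{e}tale cover of degree $2^{2n}$.

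Finally, I would identify the Galois group with $\on{Pic}^0(\mathscr{S}_F)$ as $G_k$-modules. By part (3) of Theorem~\ref{thm-covprops}, the involutions $\tau_i \colon Z_i \mapsto -Z_i$ generate the deck transformation group of $\pi^\delta$ over $\mathbb{P}_k^1$, and they preserve the divisor $E$ and hence lift to deck transformations of $C^\delta \to \mathscr{S}_F$. The $\tau_i$ pairwise commute and each has order $2$, and the single relation $\tau_1\tau_2\cdots\tau_{2n+1} = \id$ holds because simultaneous sign-change of all $Z_i$ is trivial in $\mathbb{P}_k^{2n}$; this gives a group of order $2^{2n}$, matching $\deg \pi^\delta$, so the cover is Galois with deck group $\prod_i \langle \tau_i\rangle / \langle \tau_1\cdots\tau_{2n+1}\rangle$. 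The $G_k$-action permutes the $\tau_i$ by $\sigma \cdot \tau_i = \tau_{\sigma(i)}$, and the $G_k$-equivariant map sending $\tau_i \mapsto (\tfrac{1}{2}\theta_i - \tfrac{1}{2}\theta_{2n+1})$ descends to a $G_k$-equivariant isomorphism with $\on{Pic}^0(\mathscr{S}_F)$ as computed in the proof of Proposition~\ref{prop-torsor}. The main (but minor) obstacle is the careful verification of the \'{e}tale local picture at the stacky points; modulo this, everything is formal from Theorem~\ref{thm-covprops} and the description of $\on{Pic}^0(\mathscr{S}_F)$ already recorded.
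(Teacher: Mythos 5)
Your root-stack reformulation of the factorization step is a valid and clean alternative to the paper's argument: the paper simply observes that a finite \'{e}tale cover of $\mathscr{S}_F$ is the pullback of a cover of $\mathbb{P}^1_k$ ramified with index exactly $2$ over each $\theta_i$, while you phrase the same condition via the universal property of the second root stack of $\mathbb{P}^1_k$ along $D = \theta_1 + \cdots + \theta_{2n+1}$ and the divisibility $(\pi^\delta)^*D = 2E$. Both arguments use the same input from Theorem~\ref{thm-covprops}(d); your version is more explicit about what is being asserted, which is a mild gain.

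The last step, however, contains a genuine error. The map $\tau_i \mapsto (\tfrac12\theta_i - \tfrac12\theta_{2n+1})$ is not even a well-defined group homomorphism: in the source group we have the relation $\tau_1\cdots\tau_{2n+1} = 1$, but its proposed image $\sum_{i=1}^{2n+1}(\tfrac12\theta_i - \tfrac12\theta_{2n+1}) = \sum_{i=1}^{2n}(\tfrac12\theta_i - \tfrac12\theta_{2n+1})$ is a nonzero class in $\on{Pic}^0(\mathscr{S}_F)$ (the classes $(\tfrac12\theta_i - \tfrac12\theta_{2n+1})$, $1 \le i \le 2n$, are a $\BZ/2\BZ$-basis, since $\on{Pic}^0(\mathscr{S}_F)$ has order $2^{2n}$). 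Moreover, even ignoring well-definedness, the assignment is not $G_k$-equivariant: it privileges the root $\theta_{2n+1}$, which is permuted by $G_k$ whenever $F$ is not already split. The paper avoids both problems by first mapping $\tau_i \mapsto (\theta_i - \infty) \in J[2]$, where $J$ is the Jacobian of the monic odd hyperelliptic curve $y^2 = F_{\on{mon}}(x,1)$: the point at infinity is $k$-rational, so the map is visibly $G_k$-equivariant, and the relation $\tau_1\cdots\tau_{2n+1}=1$ maps to the well-known identity $\sum_i(\theta_i - \infty) = 0$ in $J[2]$. One then composes with the already-established $G_k$-equivariant isomorphism $\on{Pic}^0(\mathscr{S}_F) \simeq J[2]$ from the proof of Proposition~\ref{prop-torsor}. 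To salvage your argument in purely stacky terms, you could instead use $\tau_i \mapsto \sum_{j \neq i}(\tfrac12\theta_i - \tfrac12\theta_j)$, which \emph{is} $G_k$-equivariant and well defined, but this is exactly the image of $(\theta_i - \infty)$ under the paper's isomorphism, so there is no shortcut here.
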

\begin{proof}
Recall that a finite \'{e}tale cover of $\scr{S}_F$ is the pullback via $\scr{S}_F \to \BP_k^1$ of a finite cover of its coarse moduli space $\BP_k^1$ that is ramified with ramification index $2$ at every point in the preimage of $\theta_i \in \BP_k^1(k_{\on{sep}})$ for each $i$ (to account for the fact that $\scr{S}_F$ has a $\frac{1}{2}$-point at each $\theta_i$) and unramified everywhere else. It follows from part (d) of Theorem~\ref{thm-covprops} that the branch locus of the map $\pi^\delta$ consists precisely of the points $\theta_i$ and that each point in $(\pi^{\delta})^{-1}(\theta_i)$ has ramification index $2$. Thus, the map $\pi^\delta$ factors through the stacky curve $\scr{S}_F$, giving an \'{e}tale map $\pi^{\delta} \colon C^{\delta} \to \scr{S}_F$.

By parts (b) and (c) of Theorem~\ref{thm-covprops}, to prove that $\pi^{\delta}$ is a $2$-covering of $\scr{S}_F$ over $k$, it suffices to show that the group generated by the automorphisms $\tau_i$ is isomorphic to $\on{Pic}^0(\mathscr{S}_F)$ as $G_k$-modules. Let $J$ denote the Jacobian of the monic odd-degree hyperelliptic curve $y^2 = F_{\on{mon}}(x,1)$. We claim that the map $\tau_i \mapsto (\theta_i - \infty) \in J[2](k_{\on{sep}})$ defines a $G_k$-equivariant isomorphism of groups. To see why this claim holds, note that the group generated by the automorphisms $\tau_i$ is isomorphic to
$$(\BZ/2\BZ)\langle \tau_1, \dots, \tau_{2n+1}\rangle\bigg/\left(\prod_{i = 1}^{2n+1} \tau_i = 1\right)$$
and the action of $\sigma \in G_k$ on $\tau_i$ is given by $\sigma \cdot \tau_i = \tau_{\sigma^{-1}(i)}$, because the action of $\sigma$ on $Z_i$ is defined to be $\sigma(Z_i) = Z_{\sigma(i)}$. On the other hand, we have that
$$J[2](k_{\on{sep}})= (\BZ/2\BZ)\langle (\theta_1 - \infty), \dots, (\theta_{2n+1} - \infty)\rangle\bigg/\left(\sum_{i = 1}^{2n+1} (\theta_i - \infty)\right)$$
and the action of $\sigma \in G_k$ on $(\theta_i - \infty)$ is $(\theta_i - \infty) \cdot \sigma = (\sigma^{-1} \cdot \theta_i - \sigma^{-1} \cdot \infty) =  (\theta_{\sigma^{-1}(i)} - \infty)$. Thus, we have the claim. The proposition now follows from the fact that $J[2] \simeq \on{Pic}^0(\mathscr{S}_{F})$ as $G_k$-modules (see the proof of Proposition~\ref{prop-torsor}).
\end{proof}
\begin{remark}
Note that two coverings $\pi^{\delta_1} \colon C^{\delta_1} \to \scr{S}_F$ and $\pi^{\delta_2} \colon C^{\delta_2} \to \scr{S}_F$ are isomorphic (as coverings of $\scr{S}_F$ defined over $k$) if and only if $\delta_1$ and $\delta_2$ represent the same class in $(K_F^\times/K_F^{\times 2})_{\on{N}\equiv 1}$.
\end{remark}

Let $(x_0, y_0, z_0) \in \scr{S}_F(k)$, and let $\delta \in K_F^\times$ be the element associated to the point $(x_0, y_0, z_0)$ via the construction in \S\ref{sec-buildabear}. By~\eqref{eq-inspect}, we have that
$$\delta = \begin{cases} x_0 - \theta z_0 & \text{ if $y_0 \neq 0$,} \\ \wt{F}(z_0\theta, z_0) + (x_0 - \theta z_0) & \text{ if $y_0 = 0$} \end{cases}$$
where $\wt{F}$ is as in~\eqref{eq-defftilde}. With this notation, we have the following result, which tells us which $2$-covering of $\scr{S}_F$ has the property that $(x_0, y_0, z_0)$ lies in the image of its $k$-rational points:
\begin{lemma} \label{lem-final}
We have that $(x_0, y_0, z_0) \in \pi^{f_0\cdot \delta}(C^{f_0\cdot \delta}(k)) \subset \scr{S}_F(k)$. 
\end{lemma}
\begin{proof}
First suppose $y_0 \neq 0$. By construction, the point $[1: 1 : \cdots : 1] \in C^{f_0 \cdot \delta}(k)$ satisfies
\begin{align}
\pi^{f_0 \cdot \delta}([1: 1 : \cdots : 1]) & = \frac{\theta_j (f_0 \cdot\delta_i) - \theta_i (f_0 \cdot \delta_j)}{f_0 \cdot \delta_i - f_0 \cdot \delta_j} = \frac{\theta_j \cdot f_0(x_0 - \theta_i z_0) - \theta_i \cdot f_0( x_0 - \theta_j z_0)}{f_0(x_0 - \theta_i z_0) - f_0( x_0 - \theta_j z_0)} \label{eq-sonorepeat}\\
& =  [x_0 : z_0] \in \BP_k^1(k) = \scr{S}_F(k). \nonumber
\end{align}
Now suppose $y_0 = 0$. Then one readily checks that the calculation in~\eqref{eq-sonorepeat} goes through by replacing $[1 : 1 : \cdots : 1]$ with the point $[0 : 1 : 1 : \cdots : 1] \in C^{f_0 \cdot \delta}(k)$.
\end{proof}
We have thus completed our description of the $2$-coverings of $\scr{S}_F$. The $2$-coverings of $\wt{S}_F$ are precisely the pullbacks of the coverings $\pi^\delta \colon C^\delta \to \scr{S}_F$ via the quotient map $\wt{S}_F \to \scr{S}_F$.

\subsection{Proof of Theorem~\ref{thm-hasse}} \label{sec-63}

Let $F \in \mathscr{F}_{2n+1}(f_0)$ be irreducible, let $\delta \in (K_F^\times/K_F^{\times 2})_{\on{N}\equiv 1}$, and consider the associated $2$-covering $\pi^\delta \colon C^\delta \to \scr{S}_F$ over $\BQ$. We say that the $2$-covering $\pi^\delta$ is \emph{locally soluble} if for every place $v$ of $\BQ$, there exists a point $P_v \in \scr{S}_F(\BZ_v)$ such that the number $\delta_v$ associated to $P_v$ via~\eqref{eq-inspect} is such that $f_0 \cdot \delta_v$ is equal to the image of $\delta$ under the natural map $(K_F^\times/K_F^{\times 2})_{\on{N}\equiv1} \to ((K_F\otimes_\BQ \BQ_v)^\times/(K_F\otimes_\BQ \BQ_v)^{\times 2})_{\on{N}\equiv1} $. (Notice that in this situation, $P_v \in \pi^\delta(C^\delta(\BQ_v))$ by Lemma~\ref{lem-final}.) Just as Bruin and Stoll did for hyperelliptic curves in~\cite[\S2]{MR2521292}, we define the \emph{fake $2$-Selmer set} $\on{Sel}_{\on{fake}}^2(\scr{S}_F) \subset (K_F^\times/K_F^{\times 2})_{\on{N}\equiv1}$ to be the set of locally soluble $2$-coverings of $\scr{S}_F$.

We claim that each element $\delta \in \on{Sel}_{\on{fake}}^2(\scr{S}_F)$ gives rise to an orbit of $G(\BZ)$ on $V(\BZ)$, the associated $G(\BQ)$-orbit of which is represented by the class $\delta \in (K_F^\times/K_F^{\times 2})_{\on{N}\equiv 1}$. For each place $v$, let $P_v \in \scr{S}_F(\BZ_v) \cap \pi^\delta(C^\delta(\BQ_v))$. Then via the construction in \S\ref{sec-buildabear}, $P_v$ naturally gives rise to an orbit of $G(\BZ_v)$ on $V(\BZ_v)$, the associated $G(\BQ_v)$-orbit of which is represented by the class $\delta \in ((K_F\otimes_\BQ \BQ_v)^\times/(K_F\otimes_\BQ \BQ_v)^{\times 2})_{\on{N}\equiv1}$. Since the algebraic group $G$ has class number equal to $1$, and since the orbit of $G(\BQ_v)$ on $V(\BQ_v)$ associated to $\delta$ has a representative over $\BZ_v$ for every place $v$, it follows that the orbit of $G(\BQ)$ on $V(\BQ)$ has representative over $\BZ$. Thus, we have the claim.

In the next lemma, we relate the fake $2$-Selmer set to the Brauer--Manin obstruction:

\begin{lemma} \label{lem-selberg}
If $\on{Sel}_{\on{fake}}^2(\scr{S}_F) = \varnothing$, then $\scr{S}_F$ has a Brauer--Manin obstruction to having a $\BZ$-point.
\end{lemma}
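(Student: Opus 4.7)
The plan is to establish the contrapositive: if $\scr{S}_F(\BA_\BQ)^{\on{Br}} \neq \varnothing$, then $\on{Sel}_{\on{fake}}^2(\scr{S}_F) \neq \varnothing$. The two main ingredients are the descent inclusion~\eqref{eq-brcont} applied to a $\on{Pic}^0(\scr{S}_F)$-torsor over $\wt{S}_{F,\BQ}$, and the explicit local identification provided by Lemma~\ref{lem-final}.

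First, I would form the pullback $\wt{\pi}^1 \colon \wt{C}^1 \to \wt{S}_{F,\BQ}$ of the distinguished $2$-covering $\pi^1 \colon C^1 \to \scr{S}_F$ along the quotient map $\wt{S}_{F,\BQ} \to \scr{S}_F$; this is a torsor under $\on{Pic}^0(\scr{S}_F)$, which is a group of multiplicative type as noted at the start of \S~\ref{sec-2covering}, so~\eqref{eq-brcont} applies. Fixing $(P_v)_v \in \scr{S}_F(\BA_\BQ)^{\on{Br}}$ and unfolding the definition~\eqref{eq-needlabel2} of the descent set, I obtain a single global class $\delta \in H^1(G_\BQ, \on{Pic}^0(\scr{S}_F)) \simeq (K_F^\times/K_F^{\times 2})_{\on{N}\equiv1}$ such that $(P_v)_v \in \wt{\pi}^\delta(\wt{C}^\delta(\BA_\BQ))$. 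Pushing each local lift forward along $\wt{C}^\delta \to C^\delta$ and using the identification $\scr{S}_F(\BZ_v) = \wt{S}_F(\BZ_v)$ (valid because $\BZ_v$ is a Noetherian UFD), I find that for every place $v$, the image of $P_v$ in $\scr{S}_F(\BQ_v)$ lies in $\pi^\delta(C^\delta(\BQ_v))$ and simultaneously belongs to $\scr{S}_F(\BZ_v)$.

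Next, I would invoke the standard torsor dichotomy: for a torsor under an abelian group, the twist whose image contains a given rational point is uniquely determined, i.e., $P_v \in \pi^{\delta'}(C^{\delta'}(\BQ_v))$ forces $\delta'$ to equal the class in $H^1(G_{\BQ_v}, \on{Pic}^0(\scr{S}_F))$ of the fiber-torsor $(\pi^1)^{-1}(P_v)$. By Lemma~\ref{lem-final}, the class $f_0 \cdot \delta_v$ does contain $P_v$ in its image, where $\delta_v$ is the element produced from $P_v$ via~\eqref{eq-inspect}; uniqueness therefore forces the localization of $\delta$ at $v$ to equal $f_0 \cdot \delta_v$. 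Since this holds for every $v$ and the witnesses $P_v$ lie in $\scr{S}_F(\BZ_v)$, we have verified the defining condition of $\delta \in \on{Sel}_{\on{fake}}^2(\scr{S}_F)$, giving the desired contradiction.

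The main technical hurdle I anticipate is the uniqueness claim in the third paragraph, together with its compatibility with the identification $H^1(G_\BQ, \on{Pic}^0(\scr{S}_F)) \simeq (K_F^\times/K_F^{\times 2})_{\on{N}\equiv1}$ used throughout \S~\ref{sec-2covering} and with localization to each $\BQ_v$. Once this cohomological bookkeeping is settled, the remainder is a formal assembly of~\eqref{eq-brcont}, Lemma~\ref{lem-final}, and the definitions of the Brauer-Manin set and the fake $2$-Selmer set.
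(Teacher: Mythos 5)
Your argument follows the same route as the paper's: both apply the descent inclusion~\eqref{eq-brcont} to a $\on{Pic}^0(\scr{S}_F)$-torsor over $\wt{S}_{F,\BQ}$ and then combine~\eqref{eq-needlabel1},~\eqref{eq-brdef},~\eqref{eq-needlabel2}, and Lemma~\ref{lem-final} to see that any point of $\scr{S}_F(\BA_\BQ)^{\on{Br}}$ would produce an everywhere locally soluble $2$-covering, i.e.\ an element of $\on{Sel}_{\on{fake}}^2(\scr{S}_F)$. The torsor-dichotomy step you flag as a potential hurdle is the standard fact that the twists of an abelian torsor partition the rational points of the base; the paper treats this as implicit in its final containment, so your version is simply a more explicit rendering of the same proof.
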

\begin{proof}
Let $\pi \colon C \to \wt{S}_{F,\BQ}$ be any $2$-covering. We have by~\eqref{eq-needlabel1},~\eqref{eq-brdef},~\eqref{eq-needlabel2}, and~\eqref{eq-brcont} that
\begin{align*}
\scr{S}_F(\BA_\BQ)^{\on{Br}} & \subset \wt{S}_F(\BA_\BQ)^{\on{int}} \cap \wt{S}_{F,\BQ}(\BA_\BQ)^\pi = \left(\prod_{v} \scr{S}_F(\BZ_v)\right) \cap \left(\bigcup_{\delta \in H^1(G_\BQ, \on{Pic}^0(\scr{S}_F)(\ol{\BQ}))} \pi^\delta(C^\delta(\BA_\BQ)) \right) \\
&  \subset \bigcup_{\delta \in \on{Sel}_{\on{fake}}^2(\scr{S}_F)} \pi^\delta(C^\delta(\BA_\BQ)),
\end{align*}
and clearly the last union above is empty if $\on{Sel}_{\on{fake}}^2(\scr{S}_F) = \varnothing$.
\end{proof}
In proving Theorem~\ref{thm-main}, we imposed local conditions on $V(\BZ)$ to sieve to the orbits that arise from local points at each place. By definition, the exact same local conditions define orbits that arise from elements of fake $2$-Selmer sets. Moreover, if $F \in \mathscr{F}_{2n+1}(f_0) \smallsetminus \mathscr{F}_{2n+1}^*(f_0)$, Theorem~\ref{thm-caseprime} implies that the distinguished covering is not everywhere locally soluble. Thus, by the argument in \S\ref{sec-finally}, we have the following result:
\begin{theorem} \label{thm-notlast}
The upper density of forms $F \in \mathscr{F}_{2n+1}(f_0)$ such that $\on{Sel}_{\on{fake}}^2(\scr{S}_F) \neq \varnothing$ is at most $\mu_{f_0} + o(2^{-n})$ when $2 \nmid f_0$ and at most $\mu_{f_0} + O(2^{-\varepsilon_1 n^{\varepsilon_2}})$ for some $\varepsilon_1, \varepsilon_2 > 0$ when $2 \mid f_0$.
\end{theorem}
To prove Theorem~\ref{thm-hasse}, it remains to determine how often it is that the stacky curves $\scr{S}_F$ have integral points everywhere locally. We do so as follows:
\begin{lemma} \label{lem-neededanother}
The density of forms $F \in \mathscr{F}_{2n+1}(f_0)$ such that $\scr{S}_F(\BZ_v) \neq \varnothing$ for every place $v$ is at least $\mu_{f_0}' + O(2^{-2n})$.
\end{lemma}
\begin{proof}
By~\eqref{eq-triv}, we always have $\mathscr{S}_F(\BR) \neq \varnothing$. If there is no prime $p \mid \upkappa$ such that $F(x_0,z_0) = 0$ for every $[x_0 : z_0] \in \BP^1(\BZ/p\BZ)$, then for any prime $p \mid \upkappa$, there exists a pair $(x_0, z_0) \in \BZ^2$ such that $\gcd(x_0, z_0) = \gcd(F(x_0, z_0),p) = 1$, so $(x_0 \cdot F(x_0, z_0), F(x_0, z_0)^{n+1}, z_0 \cdot F(x_0, z_0)) \in \mathscr{S}_F(\BZ_p)$. On the other hand, if $p \mid \upkappa$ and the mod-$p$ reduction of $F$ has a simple zero at some point of $\BP^1(\BZ/p\BZ)$, then $\mathscr{S}_F$ has a Weierstrass point over $\BZ_p$ and is thus soluble. Moreover, for any prime $p$ such that $p \nmid \upkappa$ but $p \mid f_0$, we have $(\upkappa,f_0^{\frac{1}{2}}\upkappa^{\frac{2n+1}{2}},0) \in \mathscr{S}_F(\BZ_p)$, and for any prime $p \nmid f_0$, we have $(f_0, f_0^{n+1},0) \in \scr{S}_F(\BZ_p)$.

If $p \mid \upkappa$, the $p$-adic density of forms $F \in \mathscr{F}_{2n+1}(f_0)$ such that the mod-$p$ reduction of $F$ has a zero of multiplicity greater than $1$ at each point of $\BP^1(\BZ/p\BZ)$ is $p^{-2p-1}$ if $2p+2 < 2n+1$ and $p^{-2n-1}$ otherwise. Thus, the density of forms $F \in \mathscr{F}_{2n+1}(f_0)$ such that $\scr{S}_F(\BZ_v) \neq \varnothing$ for every place $v$ is at least $\prod_{p \mid \upkappa} (1 - p^{-2p-1} - p^{-2n-1}) = \mu_{f_0}' + O(2^{-2n})$.
\end{proof}

It follows from Theorem~\ref{thm-notlast} and Lemma~\ref{lem-neededanother} that for all sufficiently large $n$, a positive proportion of $F \in \mathscr{F}_{2n+1}(f_0)$ are such that $\on{Sel}_{\on{fake}}^2(\scr{S}_F) = \varnothing$ and $\scr{S}_F(\BZ_v) \neq \varnothing$ for every place $v$. Indeed,
\begin{align*}
(1 - \mu_{f_0}) + (\mu_{f_0}' + O(2^{-2n})) & = 1 - \prod_{p \mid \upkappa} \frac{1}{p^2} + \prod_{p \mid \upkappa} \left(1 - \frac{1}{p^{2p+1}}\right) + O(2^{-n}),
\end{align*}
 which is clearly greater than $1$ for every sufficiently large $n$. Theorem~\ref{thm-hasse} now follows from Lemma~\ref{lem-selberg}.

\begin{acknowledgements}
    It is a pleasure to thank Manjul Bhargava for suggesting the questions that led to this paper and for providing invaluable advice and encouragement. We are immensely grateful to Nils Bruin, Sungmun Cho, Benedict Gross, Bjorn Poonen, Peter Sarnak, Arul Shankar, Michael Stoll, David Zureick-Brown, and Jerry Wang for answering our questions and for sharing their insights. We thank the anonymous referee for providing numerous helpful comments and suggesting several revisions that have improved the readability of the paper. We also thank Levent Alp\"{o}ge, Joe Harris, Aaron Landesman, Daniel Loughran, Anand Patel, Beth Romano, Efthymios Sofos, James Tao, and Melanie Wood for helpful discussions.
  \end{acknowledgements}

	\bibliographystyle{amsalpha}
	\bibliography{bibfile}

\end{document}